\documentclass[12pt]{amsart}
\usepackage{amsmath,amsthm,amsfonts,amssymb,latexsym, mathrsfs}
\usepackage{graphicx, xcolor} 

\usepackage{hyperref}

  \title{The fields generated by character values of Linear and Unitary Groups}

\author{Eden Ketchum}
\thanks{ This work was partially supported by the U.S. National Science Foundation, Award No. DMS-2439897}
\keywords{ Fields of character values, Groups of Lie type}
\subjclass[2010]{20C15, 20C33}

\usepackage{ marvosym }
\usepackage{amssymb}
\usepackage{amsmath}
\usepackage{mathtools}    
\usepackage{xcolor}   \usepackage{ wasysym }
\usepackage{mathrsfs}
\usepackage{yfonts}
\usepackage{stmaryrd}

\usepackage{amsfonts}
\usepackage{fancyhdr}
\usepackage{comment}
\usepackage[utf8]{inputenc}
\usepackage[a4paper, top=2.5cm, bottom=2.5cm, left=2.2cm, right=2.2cm]{geometry}
\usepackage{pst-node}
\usepackage{tikz-cd} 

\usepackage{blindtext}
\usepackage{amssymb}
\usepackage{xcolor}

\usepackage{amsthm}

\usepackage{commath}
\usepackage{times}
\usepackage{amsmath}
\usepackage{changepage}
\usepackage{amssymb}
\usepackage{graphicx}
\usepackage{chngcntr}
\usepackage{stackengine}
\usepackage{yfonts}
\usepackage{ marvosym }

\usepackage{thmtools}
\usepackage{enumitem,kantlipsum}
\usepackage{thmtools}
\usepackage{tikz-cd}
\usepackage[normalem]{ulem}
\usepackage{cleveref}

\newcommand{\C}{\mathbb{C}}

\newcommand{\Z}{\mathbb{Z}}
\newcommand{\Q}{\mathbb{Q}}

\newcommand{\Stab}{\operatorname{Stab}}

\usepackage{setspace} 

\newtheorem*{theorem*}{Theorem}
\newtheorem{theorem}{Theorem}[section]
\newtheorem{lemma}[theorem]{Lemma}
\newtheorem{corollary}[theorem]{Corollary}
\newtheorem{proposition}[theorem]{Proposition}

\newtheorem{remark}[theorem]{Remark}

\newcommand{\Sl}[2]{\operatorname{SL}}
\newcommand{\SL}{\operatorname{SL}}
\newcommand{\Irr}{\operatorname{Irr}}

\usepackage{ytableau}

\newcommand{\Gl}{\operatorname{GL}}
\newcommand{\GL}{\operatorname{GL}}
\newcommand{\GU}{\operatorname{GU}}
\newcommand{\SU}{\operatorname{SU}}
\newcommand{\E}{\mathcal{E}}
\newcommand{\e}{\epsilon}

\newcommand{\PSl}[2]{\operatorname{PSL}}
\newcommand{\PSL}{\operatorname{PSL}}
\newcommand{\PSU}{\operatorname{PSU}}

\newcommand{\PSp}{\operatorname{PSp}}
\newcommand{\Gal}{\mathrm{Gal}}
\newcommand{\G}{\mathcal{G}}
\newcommand{\PGL}{\operatorname{PGL}}

\begin{document}

\begin{abstract}
     For a finite group $G$, define the field $\Q(G) := \Q(\{\chi(g) : \chi \in \Irr(G) , g \in G\})$. In this article we compute generating sets for the fields $\Q(G)$ when $G$ is a finite general or special linear group, a finite general or special unitary group, or one of the simple groups $\PSL_n(q)$ or $\operatorname{PSU}_n(q)$. We then apply these results to classify all number fields $F$ of degree $2$ or $3$ over $\Q$ such that $F = \Q(S)$ for some non-abelian  simple group $S$. 
\end{abstract} 
\maketitle

\section{Introduction}

The study of the action of Galois automorphisms on the ordinary irreducible characters of finite groups has yielded many exciting  recent results (see for example \cite{RS26}). A less studied object in this area is the field $\Q(G) := \Q(\{\chi(g) : \chi \in \Irr(G) , g \in G\})$ associated to a finite group $G$. There has been work studying finite groups $G$ such that $\Q(G) =\Q$ (see \cite{FS88} and \cite{Tho08}), but more general questions largely remain unanswered. In particular the question of whether or not all abelian number fields are of the form $\Q(G) $ for some finite group $G$ to our knowledge remains open. As far as the author is aware this was first asked in the closing remarks of \cite{FG72}. This question also appears in \cite{Nav23}, a survey of open problems compiled by Navarro.

One way of approaching such problems may be through simple groups. The fields $\Q(A_n)$ for the alternating groups $A_n$ are well understood due to the work of Robinson and Thompson in \cite{RT95}; however, for groups of Lie type the situation is less understood. An example of a result is \cite{DOW23}, in which the authors calculated these fields for the groups $\GL_n(q)$ and $\SL_2(q)$. In the main results of our article we use recent developments in the study of the fields of character values of finite groups of Lie type to continue the work started in \cite{DOW23}.

Before stating our results we introduce some notation. For $n \in \mathbb{Z}^+$ we let $\xi_n$ denote $\exp((2\pi i)/n)$.
If $d\in \mathbb{Z}^+$, $q$ is some fixed prime power,  and $\epsilon \in \{\pm 1\}$, then we  denote
 $$
 \gamma_{\epsilon,q}(d) := \sum_{k=0}^{d-1} \xi_{q^d-\e^d}^{(\epsilon q)^k}.
 $$
 Then, using the notational convention $\GL^{+1}_n(q): = \GL_n(q)$ and $\GL^{-1}_n(q) := \GU_n(q)$,  we obtain the following result, which generalizes \cite[Theorem 1.1]{DOW23}.
\begin{restatable}{theorem}{main}\label{main1}
Let $q$ be a power of a prime, let $\e \in\{\pm 1\}$, and let $n \ge 1$ be an integer. Then,
 $$\Q(\GL^{\epsilon}_n(q)) = \Q(\{\gamma_{\e,q}(d): 1 \le d \le n\}).$$
\end{restatable}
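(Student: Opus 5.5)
We prove the two inclusions separately. In both directions we use that $\Q(G)$ is the field generated by all character values, which equals the field generated by all values $\chi(g)$ as $\chi$ runs over a $\Z$-basis of the character ring (or any generating set of class functions spanning it over $\Q$). For $G=\GL^\e_n(q)$ we use Green's and Ennola's description of the character table: $\Irr(G)$ is parametrized by partition-valued functions on the $F$-orbits of $\overline{\mathbb{F}}_q^\times$ (equivalently, on Frobenius orbits of characters of $\overline{\mathbb{F}}_q^\times$). Each irreducible character is a rational combination of Deligne--Lusztig characters $R_T^G(\theta)$. Here the $T$ are maximal tori, $T\cong\prod_i \mathbb{F}_{q^{d_i}}^\times$ in the linear case and $T\cong \prod_i C_{q^{d_i}-(-1)^{d_i}}$ (cyclic subgroups of $\mathbb{F}_{q^{2d_i}}^\times$) in the unitary case, with $\sum d_i=n$.

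\emph{Upper bound.} We will show that every $\sigma\in\Gal(\Q^{ab}/\Q)$ fixing all $\gamma_{\e,q}(d)$, $d\le n$, fixes every $\chi(g)$. Take $\sigma=\sigma_k:\xi_m\mapsto\xi_m^k$ with $m$ the exponent of $G$, and let $T=T_{d_1}\times\cdots\times T_{d_r}$ with $T_{d}$ cyclic of order $q^{d}-\e^{d}$. The Galois conjugate $R_T^G(\theta)^{\sigma}$ equals $R_T^G(\theta^k)$, because the Green functions are rational. The condition that $\sigma$ fixes $\gamma_{\e,q}(d)$ means that the Galois conjugate of the Gauss-period-like sum $\sum_j \xi^{(\e q)^j}$ is unchanged. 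By the standard theory of Gaussian periods (the stabilizer in $(\Z/(q^d-\e^d))^\times$ of the sum is exactly the subgroup $\langle \e q\rangle$), this forces $k \equiv (\e q)^{j}\pmod{q^d-\e^d}$ for some $j$. Hence $\theta^k$ restricted to each $T_{d_i}$ is a Frobenius twist of $\theta$, so $R_T^G(\theta^k)=R_T^G(\theta)$. Consequently every unipotent-by-semisimple constituent, and hence every irreducible character, is $\sigma$-fixed.

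The step I expect to be the main obstacle is the claim that the stabilizer of the period is exactly $\langle \e q\rangle$. It can fail in degenerate cases, namely small $q^d-\e^d$ or coincident sums (e.g.\ $q=2$, $d$ with $q^d-1$ prime). I would handle it by working with all $d\le n$ simultaneously. Note that $T_{d'}$ for $d'\mid d$ embeds in $T_d$, and that $k$ must simultaneously lie in the subgroups for all $d\le n$. A cleaner route, which avoids the Gaussian-period problem, is to show that the fixed field of $\langle \e q\rangle\le(\Z/(q^d-\e^d))^\times$ is generated by $\gamma_{\e,q}(d)$ together with the $\gamma_{\e,q}(d')$ for $d'<d$. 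I would prove this by induction on $d$ using the trace from $\Q(\xi_{q^d-\e^d})$.

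\emph{Lower bound.} For each $d\le n$ we exhibit a character value equal, up to a rational combination of elements already in the field, to $\gamma_{\e,q}(d)$. Take $g=\mathrm{diag}(s,I_{n-d})$ with $s\in T_d$ a generator, regular in $\GL^\e_d(q)$. Take $\chi$ to be the constituent of $\pm R_{T}^G(\theta\times 1)$, with $T=T_d\times T'$ and $\theta$ a faithful character of $T_d$ in general position. The character formula then gives $\chi(g)$ as an integer multiple of $\sum_{j=0}^{d-1}\theta(s)^{(\e q)^j}$ plus terms coming from the trivial part, which are rational values of unipotent characters. Hence $\gamma_{\e,q}(d)\in\Q(G)$. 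When $d=n$ this is just the cuspidal (or Deligne--Lusztig) character evaluated at a regular elliptic element. For $d<n$, the reduction to $\GL^\e_d(q)\times\GL^\e_{n-d}(q)$ via Harish-Chandra or Lusztig induction handles it, so I would run an induction on $n$ and note that the extra Green function factors are rational integers and nonzero at the chosen element.
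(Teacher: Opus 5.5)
Your representation-theoretic reduction of the upper bound is sound and differs from the paper's route. Uniformity of $\Irr(\GL^\e_n(q))$ with rational coefficients, $R_T^G(\theta)^{\sigma_k}=R_T^G(\theta^k)$, and the fact that $N_G(T)$ realizes the twists $t\mapsto t^{(\e q)^{j_i}}$ on each factor $T_{d_i}$ independently together replace Theorem~\ref{SV20}, Proposition~\ref{reduction} and Lemmas~\ref{SplitSemisimples} and~\ref{divides}. Your lower bound is also fine.

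\textbf{The gap.} Everything hinges on the claim that $\sigma_k(\gamma_{\e,q}(d))=\gamma_{\e,q}(d)$ forces $k\in\langle\e q\rangle\le(\Z/(q^d-\e^d)\Z)^\times$, and this is not a standard property of Gaussian periods. The classical argument says that periods over distinct cosets are linearly independent, hence distinct. That argument needs the primitive $m$-th roots of unity to be linearly independent over $\Q$, which holds only for squarefree $m$, and $m=q^d-\e^d$ is often not squarefree.

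The claim is in fact false for $(d,q,\e)=(2,3,-1)$ and $(3,2,-1)$, where $\gamma_{\e,q}(d)=\xi_8+\xi_8^{5}=0$ and $\xi_9+\xi_9^{7}+\xi_9^{4}=0$ respectively. Conversely, the failure you suspected ($q=2$, $2^d-1$ prime) is the prime-modulus case, where the claim does hold.

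Your ``cleaner route'' does not get around this. The fixed field of $\langle\e q\rangle$ in $\Q(\xi_m)$ is spanned by traces of the $\xi_m^a$, that is, by conjugates of sums $\sum_j\xi_{m'}^{(\e q)^j}$ with $m'\mid m$. Showing these lie in $\Q(\gamma_{\e,q}(d'):d'\le d)$ is the same stabilizer problem. In the paper this is Corollary~\ref{brauer}, which is deduced from the lemma, not the other way round.

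\textbf{What is missing.} You need an actual proof of the stabilizer statement; in the paper this is Lemma~\ref{numbertheorylemmabig}.
\begin{enumerate}
\item Take a prime $\ell$ modulo which $\e q$ has order exactly $d$ (a Zsigmondy prime). It exists except when $d=2$ or $(d,q,\e)\in\{(6,2,1),(3,2,-1)\}$.
\item Decompose a putative relation $\gamma_{\e,q}(d)-\sigma_r(\gamma_{\e,q}(d))=0$, with $r\notin\langle\e q\rangle$, into minimal vanishing sums of roots of unity. The Lam--Leung result that normalized minimal vanishing sums have terms of squarefree order then forces $\ell\,\|\,q^d-\e^d$.
\item Split each $\xi_m^x$ into its $\ell$-part and $\ell'$-part. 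The $d$ distinct roots $\xi_\ell^{(\e q)^j}$ are linearly independent over $\Q(\xi_{m/\ell})$, which gives a contradiction.
\end{enumerate}
The cases without a Zsigmondy prime are handled separately: $d=2$ uses the absence of $4$-term minimal vanishing sums, and $(6,2,1)$ is done by hand. In the two genuine exceptions you must use $\gamma_{\e,q}(1)=\xi_{q-\e}$. This forces $k\equiv 1$ modulo $4$ (resp.\ $3$), hence $k\in\langle\e q\rangle$ modulo $8$ (resp.\ $9$).

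With that lemma supplied, your Deligne--Lusztig argument proves the theorem.
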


We note that the case where $\epsilon = 1$ does not exactly match the statement of \cite[Theorem 1.1]{DOW23}, as in that theorem the authors also adjoin all the Galois conjugates of the elements $\gamma_{1,q}(d)$ for $1\le d\le n$. However, since $\Q(\gamma_{1,q}(d))$ is contained in the abelian number field $\Q(\xi_{q^d-1})$, it is immediate from the fundamental theorem of Galois theory that $\Q(\gamma_{1,q}(d))$ is Galois over $\Q$  and contains all the Galois conjugates of  $\gamma_{1,q}(d)$. Therefore, we do not include these elements in our generating set. We further note that these fields are exactly the fields of values of the Brauer character associated to the natural representation that these groups, as matrix groups over a field of positive characteristic, come equipped with (see Corollary \ref{brauer} below). 

If we use the same notational convention as above and  denote similarly  $\SL^{+1}_n(q) := \SL_n(q), $ $\SL^{-1}_n(q) := \SU_n(q), $ $\PSL^{+1}_n(q) := \PSL_n(q)$ and $\PSL^{-1}_n(q) := \PSU_n(q)$ we also obtain the following.

\begin{theorem}\label{main}
   Let $q$ be a power of a prime $p$, $3 \le n \in \mathbb{N}^+$, $\e \in \{\pm1\}, a=(q^n- \epsilon^n)/{(q-\epsilon)} $,  and $b = (q-\epsilon,a) $. If $q$ is odd let $\eta \in \{\pm 1\}$ such that $ \eta \equiv p(\text {mod }4)$. Further we write 
   $$X_a := \xi_a^{ }+\xi_a^{\epsilon q}+...+\xi_a^{(\epsilon q)^{n-1}}$$
   and
   $$X_{a/b}  := \xi_{a/b}^{ }+\xi_{a/b}^{\e q}+...+\xi_{a/b}^{(\e q)^{n-1}}.$$ 
  We have the following
     $$\Q(\SL^{\epsilon}_n(q))= \begin{cases} 
      \Q(\GL^{\epsilon}_{n-1}(q))(X_a,\sqrt{\eta q}) & q \text{ odd, } q \text{ not a square, } 2\le n_2\le (q-\epsilon)_2\\
      \Q(\GL^{\epsilon}_{n-1}(q))(X_a) & \text{otherwise} \\
      \end{cases}.$$
      Further, if we have
   $$
(n,q,\e) \notin\{(3,2,-1),(3,7,1),(3,5,-1),(3,4,1),(4,3,1),(4,5,1),(4,3,-1),(5,4,-1)\},
$$
then
       $$\Q(\PSL^{\epsilon}_n(q))= \begin{cases} 
      \Q(\GL^{\epsilon}_{n-1}(q))(X_{a/b},\sqrt{\eta q}) & q \text{ odd, } q \text{ not a square, } 2\le n_2\le (q-\epsilon)_2\\
      \Q(\GL^{\epsilon}_{n-1}(q))(X_{a/b}) & \text{otherwise} .\\
   \end{cases}$$ 
   If $\eta'\equiv q \text{ (mod 4)}$ with $\eta' \in \{\pm1\}$, then $$\Q(\PSL_2(q)) =    \begin{cases} 
      \Q(\xi_{q-1} + \xi_{q-1}^{-1},\xi_{q+1}+\xi_{q+1}^{-1}) & p = 2\\
     \Q(\xi_{(q-1)/2} + \xi_{(q-1)/2}^{-1},\xi_{(q+1)/2}+\xi_{(q+1)/2}^{-1},\sqrt{\eta' q}) & p >2\\
   \end{cases}.$$

 \end{theorem}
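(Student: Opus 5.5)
We sandwich $\Q(\SL^{\epsilon}_n(q))$ between an upper and a lower bound and show that the two coincide. The tool is Lusztig's Jordan decomposition together with the recent descriptions of Galois actions on characters of groups of Lie type (Schaeffer Fry, Srinivasan--Vinroot, Geck). For $\GL^\epsilon_n(q)$, Theorem \ref{main1} says that the field of values is governed by the semisimple classes, i.e.\ by the Galois orbits of eigenvalues of elements of $\overline{\mathbb F}_q^\times$. Restricting to $\SL^\epsilon_n(q)$ changes two things.

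\textbf{Determinant-one constraint.} The relevant semisimple elements now have determinant $1$. Equivalently, on the dual side the conjugacy classes of $\GL^\epsilon_n$ split under the determinant. Every semisimple class whose elements have an elementary divisor of degree $d\le n-1$ still occurs in $\SL^\epsilon_n(q)$, after compensating the determinant with a suitable extra eigenvalue. Evaluating the Deligne--Lusztig (Brauer-lifted natural) character at such elements gives $\Q(\GL^\epsilon_{n-1}(q))\subseteq \Q(\SL^\epsilon_n(q))$. The new contributions come from regular elliptic tori of order $(q^n-\epsilon^n)$ intersected with $\SL$, which is cyclic of order $a$. Character values at its generators produce $X_a$.

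\textbf{Upper bound.} Every irreducible character of $\SL^\epsilon_n(q)$ is a constituent of a restriction from $\GL^\epsilon_n(q)$. Hence $\chi(g)$ lies in the field generated by the $\GL$-character values together with the values of the relevant Clifford-theoretic/Gelfand--Graev projections. Following the known results on $\SL_n$ character fields, the only extra irrationality arising from splitting of restrictions is $\sqrt{\eta q}$. It comes from Gauss sums in the values of the components of Gelfand--Graev characters, and it appears exactly when a character splits into an even number of constituents that are not Galois stable. This happens precisely when $q$ is odd and not a square and $2\le n_2\le (q-\epsilon)_2$; compare this with the $\SL_2$ case in \cite{DOW23}. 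Conversely, in that case I would exhibit $\sqrt{\eta q}$ explicitly, using a semisimple character whose restriction splits into $2$ pieces distinguished by a Galois automorphism moving $\sqrt{\eta q}$. I expect this case analysis, pinning down exactly when $\sqrt{\eta q}$ appears, to be the main obstacle. I would handle it with the Schaeffer Fry--Vinroot criterion for Galois action on the $\SL$-constituents of a restricted character.

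\textbf{Passage to $\PSL^\epsilon_n(q)$.} The characters of $\PSL^\epsilon_n(q)$ are those of $\SL^\epsilon_n(q)$ trivial on the center $Z$, and $Z$ has order $b$. Their Lusztig series correspond to semisimple classes in the dual $\PGL^\epsilon_n$, so only elements of the torus modulo $Z$ contribute. This replaces $\xi_a$ by $\xi_{a/b}$. I would check that the lower-bound argument giving $\Q(\GL^\epsilon_{n-1}(q))$ survives the quotient. It should, using elements with enough freedom in their eigenvalues, but it fails in small cases where tori of degree $n-1$ are too small relative to $Z$. Those cases are exactly the listed exceptions, which I would verify separately (e.g.\ by the ATLAS/GAP character tables).

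\textbf{The case $n=2$.} Here the classical character table of $\PSL_2(q)$ is explicit. The values $\xi+\xi^{-1}$ come from the principal and discrete series restricted to characters trivial on $\{\pm1\}$, which halves the orders when $p$ is odd. The value $\sqrt{\eta' q}$ comes from the half-discrete-series values $(-1\pm\sqrt{\eta' q})/2$.
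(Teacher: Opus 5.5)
\textbf{There is a genuine gap: the upper bound.} That is where the paper does its real work, and your sketch does not reach it.

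Granting the splitting result you invoke, you only get $\Q(\chi)\subseteq\Q(\tilde\chi_{\SL})(\sqrt{\eta q})$ for some $\tilde\chi\in\Irr(\GL^\e_n(q))$. In the paper that step is just a citation of \cite[Theorem 6.1]{SV19}, together with an explicit $s$ whose eigenvalues are all the primitive $2^{k+1}$-th roots of unity, $n_2=2^k$. So it is not the main obstacle.

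Read literally, ``the field generated by the $\GL$-character values'' is $\Q(\GL^\e_n(q))$, and that is in general too big. For example, $\gamma_{1,4}(3)\in\Q(\GL_3(4))$. But $\SL_3(4)$ has no elements of order $9$, so $\Q(\SL_3(4))\subseteq\Q(\xi_{420})$. Moreover $\gamma_{1,4}(3)\notin\Q(\xi_{63})\cap\Q(\xi_{420})=\Q(\xi_{21})$, since $\xi_{63}\mapsto\xi_{63}^{22}$ fixes $\Q(\xi_{21})$ but multiplies $\gamma_{1,4}(3)$ by $\xi_3$.

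Read charitably, as values at elements of $\SL^\e_n(q)$ only, you still must compute $\Q(\{\tilde\chi(g):\tilde\chi\in\Irr(\GL^\e_n(q)),\ g\in\SL^\e_n(q)\})$. Nothing in your sketch does this: the determinant-one paragraph only puts $X_a$ and $\Q(\GL^\e_{n-1}(q))$ \emph{into} the field. The paper supplies the computation in four steps:
\begin{itemize}
\item Lemma \ref{neweigenvalues}: for semisimple $\tilde\chi\in\E(\GL^\e_n(q),s)$, $\sigma$ fixes $\tilde\chi_{\SL}$ iff $\sigma(s)$ is conjugate to $zs$ for some central $z$. When the eigenvalues of $s$ form one $\e q$-orbit of an element of order $c$, this means $\sigma$ stabilizes $\{\xi_{c/d}^{(\e q)^k}\}$ with $d=(c,q-\e)$.
\item Proposition \ref{reduction} and Lemma \ref{SplitSemisimples}: all other series give subfields of $\Q(\GL^\e_{n-1}(q))$.
\item A reduction to the case $c=q^n-\e^n$.
\item The indispensable number theory: a Zsigmondy prime divides $a$, so the vanishing-sums argument of Lemma \ref{numbertheorylemmabig} shows that any $\sigma$ fixing the single number $X_a$ stabilizes the whole orbit $\{\xi_a^{(\e q)^k}\}$. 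The small cases are done by hand.
\end{itemize}
Without that last step, knowing that $X_a$ occurs as a value only gives $X_a\in\Q(\SL^\e_n(q))$. It does not show the relevant character fields are generated by $X_a$.

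Your group-side idea could probably be completed. The tools are that $\Q(G)$ is the fixed field of $\{\sigma_r : g^r\sim_G g \text{ for all } g\}$, and that a determinant-one element whose eigenvalues form one orbit of length $n$ has order dividing $a$. But it needs exactly the same number theory. It also needs the justification that Brauer character values lie in $\Q(G)$ (they are $\mathbb{Z}$-combinations of restricted ordinary characters). Finally, because of the compensating eigenvalue, you must first show $\xi_{q-\e}\in\Q(\SL^\e_n(q))$. You address none of these.

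\textbf{For $\PSL^\e_n(q)$ the same upper-bound gap recurs, and the lower bound is only asserted (``it should'').} The natural Brauer character does not descend to $S$. What must be shown is that a dual class which is $\sigma$-stable modulo the centre is genuinely $\sigma$-stable. The paper does this in stages:
\begin{itemize}
\item It first obtains $\xi_{q-\e}\in\Q(S)$ from dual elements with non-identity eigenvalues $(\zeta_{q-\e},\zeta_{q-\e}^{-1})$ and $(\zeta_{q-\e},\zeta_{q-\e},\zeta_{q-\e}^{-2})$.
\item Then, for $\sigma$ fixing $\xi_{q-\e}$, it uses determinant-one elements made of an $\e q$-orbit of length $i\le n-1$ plus the compensating eigenvalue $\zeta_{q-\e}^{-1}$. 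Counting eigenvalues lying in $\GL^\e_1(q)$ rules out central twists.
\item An extra element is needed when $(q,\e)=(3,1)$.
\end{itemize}
The excluded cases come out of these specific computations. For instance, for $\PSL_3(4)$ the element with eigenvalues $(\zeta_3,\zeta_3,\zeta_3^{-2})$ is central, and for $\PSL_3(7)$ inverting $\zeta_6$ sends $(\zeta_6,\zeta_6,\zeta_6^{-2})$ to a central multiple of itself. So ``tori of degree $n-1$ too small relative to $Z$'' does not identify them.

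You also need Corollary \ref{center}, which rests on Propositions \ref{NT13} and \ref{stthings}: a character is trivial on $Z$ iff its series is indexed by an element of $\mathbf{O}^{p'}(\PGL^\e_n(q))$, i.e.\ essentially a determinant-one element. This dual-side determinant condition belongs to $\PSL$, not to $\SL$ as your first paragraph says. For $\SL^\e_n(q)$ the dual is $\PGL^\e_n(q)$, whose semisimple classes are $\GL$-classes modulo scalars, and that is what turns $\xi_{q^n-\e^n}$ into $\xi_a$.

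Your $n=2$ treatment matches the paper's: both read it off the generic character table.
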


We note that the conditions under which we adjoin $\sqrt{\eta q}$ or $\sqrt{\eta' q}$ are exactly the conditions that appear in \cite[Section 4]{Tur01} and \cite[Theorem 6.1]{SV19}, and that these results are relied upon for the proof of Theorem \ref{main}.

 In the final section we use these results to obtain the following:
\begin{theorem}\label{SmallFields}
    Let $F$ be an extension of $\Q$ such that $|F:\Q| \le 3$. Then, $F = \Q(S)$ for some non-abelian simple group $S$, if and only if $F$ is one of $\Q(\sqrt{5})$, $\Q(\sqrt{13})$, $\Q(\sqrt{17})$,  
     $\Q(\sqrt{21})$, $\Q(\sqrt{-3})$, $
 \Q(\sqrt{-7})$, $\Q(\sqrt{-11})$ or 
     $\Q(\sqrt{-15})$. 
\end{theorem}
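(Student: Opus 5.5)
The plan is to run through the classification of finite simple groups and compute, or at least bound from below, the degree $|\Q(S):\Q|$ for every non-abelian simple $S$. The aim is to show that only finitely many small groups can give degree at most $3$, and then to identify their fields.

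\textbf{Easy families.} I would dispose of these first.
\begin{itemize}
\item \emph{Sporadic and Tits groups.} Read $\Q(S)$ off the ATLAS character tables; each gives either $\Q$-adjoined square roots or visibly larger fields. This step alone should produce candidates such as $\Q(\sqrt{-7})$, $\Q(\sqrt{-11})$ and $\Q(\sqrt{-15})$.
\item \emph{Alternating groups.} Use Robinson--Thompson \cite{RT95}: $\Q(A_n)$ is generated by $\sqrt{p^*}$ for the odd primes $p\le n$ with $p\ne n-2$, together with $\sqrt{-1}$ in certain cases. A degree at most $3$ forces $n$ small. We get $\Q(A_5)=\Q(\sqrt5)$, and checking $A_6,A_7,A_8$ gives $\Q(\sqrt5)$, $\Q(\sqrt{5},\sqrt{-7})$, and so on. Nothing has odd degree $3$, since these fields are multiquadratic.
\end{itemize}

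\textbf{Lie type, linear and unitary.} Here I would use Theorem \ref{main}.
\begin{itemize}
\item \emph{$\PSL_2(q)$.} The field contains $\Q(\xi_{(q\pm1)/2}+\xi_{(q\pm1)/2}^{-1})$, of degree $\varphi((q\pm1)/2)/2$. Requiring the compositum, with $\sqrt{\eta' q}$, to have degree at most $3$ bounds $q$ explicitly. For example, $q=13$ gives $\Q(\sqrt{13})$ and $q=17$ gives $\Q(\sqrt{17})$, while $\PSL_2(7)$ gives $\Q(\sqrt{-7})$.
\item \emph{$\PSL^\e_n(q)$, $n\ge3$.} The field contains $\Q(\GL^\e_{n-1}(q))\supseteq\Q(\gamma_{\e,q}(1))=\Q(\xi_{q-\e})$ and also $\Q(\gamma_{\e,q}(2))$. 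Now $\Q(\xi_{q-\e})$ has degree $\varphi(q-\e)$, and $\gamma_{\e,q}(d)$ generates the subfield of $\Q(\xi_{q^d-\e^d})$ fixed by the group $\langle \e q\rangle$ of order $d$. Hence its degree is at least $\varphi(q^d-\e^d)/d$, which grows quickly. This bounds $n$ and $q$ to a finite list, which I would check by hand or with the exceptional groups listed in Theorem \ref{main}, using ATLAS data. The value $\Q(\sqrt{21})$ presumably arises here or from a sporadic group.
\end{itemize}

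\textbf{Other Lie types.} For groups of types $B,C,D,{}^2D,G_2,F_4,E_{6,7,8}$, ${}^2B_2$, ${}^2G_2$, ${}^2F_4$ and ${}^3D_4$, Theorem \ref{main} does not apply. I would instead use a lower bound from a single character: Deligne--Lusztig characters $R_T^\theta$ evaluated at regular elements of a torus $T$ that contains a Singer-type cyclic subgroup of order $\Phi_e(q)$. The Galois action on semisimple characters, which is compatible with the $\theta\mapsto\theta^\sigma$ action on Lusztig series, gives that $\Q(S)$ contains the fixed field of $\xi_{\Phi_e(q)}$ under the Weyl group action on $T$. That fixed field has degree at least $\varphi(\Phi_e(q))/|N(T)/T|$, which forces $q$ and the rank to be tiny. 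The remaining small cases, such as $\PSp_4(3)$, $G_2(3)$ and the Suzuki group ${}^2B_2(8)$, are checked in the ATLAS.

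\textbf{Converse and main obstacle.} For the converse, exhibit a group for each listed field, for example:
\begin{itemize}
\item $A_5$ for $\Q(\sqrt5)$;
\item $\PSL_2(13)$ for $\Q(\sqrt{13})$;
\item $\PSL_2(17)$ for $\Q(\sqrt{17})$;
\item $\PSL_2(7)$ for $\Q(\sqrt{-7})$;
\item $M_{11}$ for $\Q(\sqrt{-11})$;
\item $\PSU_3(3)$ or $\PSL_3(4)$ for $\Q(\sqrt{-3})$-type fields;
\item an appropriate sporadic or small Lie-type group for $\Q(\sqrt{21})$ and $\Q(\sqrt{-15})$.
\end{itemize}
I expect the main obstacle to be the uniform lower bound for the non-linear/unitary Lie types, in making the Galois action on Deligne--Lusztig characters precise enough to bound the degree. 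I would first try to cite known results on rationality fields of semisimple characters to get this bound.
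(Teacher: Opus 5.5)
\textbf{There is a genuine gap.} It is most visible in the converse direction: most of the witnesses you propose do not have character field of degree at most $3$. Your $A_5$ and $\PSL_2(7)$ are fine, but the others fail.
\begin{itemize}
\item $\PSL_2(13)$ and $\PSL_2(17)$: by the formula in Theorem \ref{main} that you quote, $\Q(\PSL_2(13))$ contains the cubic number $\xi_7+\xi_7^{-1}$ as well as $\sqrt{13}$, so it has degree $6$. Similarly $\Q(\PSL_2(17))$ contains $\xi_8+\xi_8^{-1}=\sqrt2$, the cubic $\xi_9+\xi_9^{-1}$ and $\sqrt{17}$, so it has degree $12$.
\item $M_{11}$: $\Q(M_{11})=\Q(\sqrt{-2},\sqrt{-11})$.
\item $\PSU_3(3)$: $\Q(\PSU_3(3))=\Q(\sqrt{-1},\sqrt{-7})$, since $\Q(\GU_2(3))=\Q(\sqrt{-1})$ and $X_7$ generates $\Q(\sqrt{-7})$.
\item $\PSL_3(4)$: this is one of the excluded cases of Theorem \ref{main}, and its field contains both $\sqrt5$ and $\sqrt{-7}$.
\end{itemize}
You also give no witness for $\Q(\sqrt{21})$ or $\Q(\sqrt{-15})$. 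The correct witnesses are:
\begin{itemize}
\item $\Q(\sqrt{-15})=\Q(A_9)$ and $\Q(\sqrt{21})=\Q(A_{10})$; the only split classes with irrational values have cycle types $(5,3,1)$ and $(7,3)$ respectively.
\item $\Q(\sqrt{-11})=\Q(M_{12})$ and $\Q(\sqrt{-3})=\Q(\PSU_4(2))$.
\item Crucially, $\Q(\sqrt{17})=\Q(\PSp_8(2))$ and $\Q(\sqrt{13})=\Q(\operatorname{O}_8^+(3))$ (Proposition \ref{gapstuff2}).
\end{itemize}

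These errors also show that the ``only if'' direction has not really been carried out. You stop the alternating check at $A_8$ and misstate $\Q(A_7)$, which is $\Q(\sqrt{-7})$. The Robinson--Thompson formula is only valid for large $n$; the paper uses it for $n>24$ and treats $n\le 24$ by GAP and by partitions into distinct odd parts. Two of the eight listed fields first appear at $n=9$ and $n=10$. Likewise, $\sqrt{13}$ and $\sqrt{17}$ arise only in the ``other Lie types'' family that you planned to bound away.

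For those other types, a single torus fixed-field bound such as $\varphi(m)/|N(T)/T|$ cannot finish the argument. It may well return a quadratic field, and it must do so for $\PSp_8(2)$ and $\operatorname{O}_8^+(3)$, where the answer really is quadratic. A quadratic subfield only tells you that $|\Q(S):\Q|$ is $2$ or at least $4$. You therefore need a mechanism that rules out degree exactly $2$. One option is to find a second independent subfield in every remaining case. The paper instead uses \cite[Theorem 1.2]{Tre17}: outside an explicit finite list, handled by GAP in Proposition \ref{gapstuff2}, these simple groups are not composition factors of groups with multiquadratic character fields, so exhibiting one quadratic subfield suffices. For the exceptional types the paper uses the unipotent character fields in \cite{Gec03}, again together with \cite{Tre17} for $E_7$.

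Finally, your lower bound must be for a character of $S$, not of $\mathbf{G}^F$. The semisimple element has to lie in $\mathbf{O}^{p'}(G^*)$ (Corollary \ref{center}). You must also allow for the field shrinking by the index $\delta$ when restricting from a group with disconnected center. You flag this as the main obstacle but leave it unresolved.
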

\textbf{Acknowledgments:} 
This work was completed as part of my Ph.D. study at the University of Denver. I would like to thank and acknowledge my advisor A. A. Schaeffer Fry for her support and guidance. I would also like to thank Gabriel Navarro for his suggestion to consider which degree-$2$ number fields over the rationals occur as character fields of simple groups.

\section{Preliminaries}

\subsection{Notation}
 Throughout, as in the introduction, we denote $\GL^{+1}_n(q): = \GL_n(q)$ and $\GL^{-1}_n(q) := \GU_n(q)$. Similarly we let $\SL^{+1}_n(q) := \SL_n(q) ,\SL^{-1}_n(q) := \SU_n(q),\PSL^{+1}_n(q) := \PSL_n(q)$, and $\PSL^{-1}_n(q) := \PSU_n(q)$. Further, for a fixed positive integer $n$, $\xi_n$ will denote $\exp(2\pi i/n) \in \C$. For a given prime $p$ we fix an isomorphism from $\overline{\mathbb{F}_p}^\times$ to the group of roots of unity of $p'$-order   in $\mathbb{C}^\times$. We then let $\zeta_{n}$ denote the preimage of $\xi_n$ under this isomorphism. The appropriate $p$ will always be clear from context. We also denote $\G := \Gal(\Q^{ab}:\Q)$.

\subsection{Jordan Decomposition of Characters} Here we provide some details of Deligne-Lusztig  theory, which we will use throughout, and we refer the reader to Chapters 1 and 2 of \cite{GM20} for further details. Our general setting for this discussion will be the following. Let $\mathbf{G}$ be a connected reductive algebraic group defined over an algebraically closed field of positive characteristic $p$ and let $F:\mathbf{G}\rightarrow \mathbf{G}$ be a Steinberg endomorphism. We call the fixed points of $\mathbf{G}$ under $F$, denoted $\mathbf{G}^F$, a \textit{finite group of Lie type}. Further, if $\mathbf{G}$ is of simply connected type and the finite group $\mathbf{G}^F/\mathbf{Z}(\mathbf{G}^F)$ is simple, we call $\mathbf{G}^F/\mathbf{Z}(\mathbf{G}^F)$ a \textit{simple group of Lie type}. 

Given a pair $(\mathbf{G},F)$ as above, we obtain a corresponding dual pair $(\mathbf{G}^*,F^*)$ (cf. \cite[Definition 1.5.17]{GM20}). We then obtain a partition of the irreducible characters of $\mathbf{G}^F$ with blocks indexed by $(\mathbf{G}^*)^{F^*}$ conjugacy classes of $p'$-order elements of $(\mathbf{G}^{*})^{F^*}$ (we call such elements \textit{semisimple}). For a given semisimple element $s \in (\mathbf{G}^*)^{F^*}$, we denote the block of this partition indexed by the conjugacy class of $s$ by $\mathcal{E}(\mathbf{G}^F,s)$. These blocks are called \textit{rational Lusztig series}. The property of rational Lusztig series which is most pertinent to our discussion is the following.

\begin{remark}\label{actiononlusztig}
From \cite[Proposition 3.3.15]{GM20}  we see that if $G$ is a finite group of Lie type corresponding to a pair $(\mathbf{G},F)$ as above and $\sigma \in \mathcal{G}$ such that   $\sigma(\xi_{|G|}) = \xi_{|G|}^r$, then for any rational Lusztig series $\mathcal{E}(G,s)$ and any $\chi \in\mathcal{E}(G,s) $, we have $\sigma(\chi) \in \mathcal{E}(G,s^r)$. We therefore obtain an action of $\mathcal{G}$ on conjugacy classes of semisimple elements of $(\mathbf{G}^*)^{F^*}$ given by $\sigma(s) = s^r$ when $\sigma(\mathcal{E}(G,s)) = \mathcal{E}(G,s^r)$. We will use \cite[Proposition 3.3.15]{GM20} as well as this action throughout without reference.
\end{remark}

The characters in the series $\mathcal{E}(\mathbf{G}^F,1)$ are referred to as \textit{unipotent characters}. If $\mathbf{G}$ has a connected center, then, given a rational Lusztig series $\mathcal{E}(\mathbf{G}^F,s)$, there exists a corresponding  \textit{Jordan decomposition map}, which is a bijection $$
J_s:\mathcal{E}(\mathbf{G}^F,s)\rightarrow\mathcal{E}((\mathbf{C}_{\mathbf{G}^*}(s))^{F^*},1)
$$ 
subject to a certain set of criteria. We refer the reader to  \cite[section 2.6]{GM20} for more details (note that there is a more general version of the Jordan decomposition of characters that applies to the case where $\mathbf{G}$ does not have connected center). While these maps are not in general unique, if $\mathbf{G}$ has a connected center, which is true for general linear and general unitary groups for example, the work of Srinivasan  and Vinroot in \cite{SV20} allows us to make a useful choice of Jordan decomposition maps (see also \cite{STV25} for a generalization of this result). 

\begin{theorem}\label{SV20} (Srinivasan-Vinroot) 
    Let $(\mathbf{G},F)$ be as in the above setting and further assume $\mathbf{G}$ has connected center. Let $L$ be any subfield of $\mathbb{C}$,  let $m$ be the exponent of $\mathbf{G}^F$, and let $K = \Q(\xi_m)\cap L$. For any $\sigma \in \Gal(\Q(\xi_m)/K)$ let $r_\sigma \in \mathbb{Z}^+$ such that $(r_\sigma,m) = 1$ and $\sigma(\xi_m) = \xi_m^{r_\sigma}.$

    We can then choose  Jordan decomposition maps such that for any $\chi  \in \mathbf{G}^F$ with $\chi \in \mathcal{E}(\mathbf{G}^F,s_0)$, if $J_{s_0}(\chi) = \nu$, we have that $\Q(\chi) \subseteq L$ if and only if the following hold for every $\sigma \in \Gal(\Q(\xi_m)/K)$:
    \begin{enumerate}
        \item The element $s_0$ is $(\mathbf{G}^*)^{F^*}$-conjugate to $s_0^{r_\sigma}.$ 
        \item If $h_0 \in (\mathbf{G^*})^{F^*}$ such that $h_0s_0h_0^{-1} = s_0^{r_\sigma}$, then $^{h_0}\nu =  \,^\sigma\nu.$
    \end{enumerate}
\end{theorem}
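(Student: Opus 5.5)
The statement is the Srinivasan--Vinroot theorem, quoted from \cite{SV20}. Here is how I would prove it. The idea is to build a Galois-equivariant Jordan decomposition and then read off the field of values from the behaviour of $\sigma$ on the pair $(s_0,\nu)$.

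First, all character values of $\mathbf{G}^F$ lie in $\Q(\xi_m)$. Hence $\Q(\chi)\subseteq L$ if and only if $\Q(\chi)\subseteq K$, which holds if and only if ${}^\sigma\chi=\chi$ for every $\sigma\in\Gal(\Q(\xi_m)/K)$. By Remark \ref{actiononlusztig}, ${}^\sigma\chi\in\mathcal{E}(\mathbf{G}^F,s_0^{r_\sigma})$. Since distinct rational series are disjoint, ${}^\sigma\chi=\chi$ forces $s_0$ to be $(\mathbf{G}^*)^{F^*}$-conjugate to $s_0^{r_\sigma}$. This gives condition (1). It remains to show that, once (1) holds, the equality ${}^\sigma\chi=\chi$ is equivalent to (2).

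Next, fix $h_0$ with $h_0s_0h_0^{-1}=s_0^{r_\sigma}$. Conjugation by $h_0$ identifies $\mathbf{C}_{\mathbf{G}^*}(s_0)^{F^*}$ with $\mathbf{C}_{\mathbf{G}^*}(s_0^{r_\sigma})^{F^*}$; these centralizers are connected because $Z(\mathbf{G})$ is connected. It therefore identifies their unipotent characters, $\nu\mapsto{}^{h_0}\nu$. This identification does not depend on the choice of $h_0$, because $\mathbf{C}_{\mathbf{G}^*}(s_0)^{F^*}$ acts trivially on its own unipotent characters. The key claim is that the maps $J_s$ can be chosen \emph{equivariantly}:
\[
J_{s^{r_\sigma}}({}^\sigma\chi)={}^{h}\bigl({}^\sigma J_s(\chi)\bigr)\quad\text{whenever } hsh^{-1}=s^{r_\sigma}, \text{ for all semisimple } s.
\]
Given this, in the case $s=s_0$ we have ${}^\sigma\chi=\chi$ if and only if $J_{s_0}({}^\sigma\chi)=\nu$. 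By the displayed formula this reads ${}^{h_0}({}^\sigma\nu)=\nu$. Applying $h_0^{-1}$ and using that $\sigma$ commutes with conjugation, this is the condition ${}^{h_0}\nu={}^\sigma\nu$ of (2), up to replacing $h_0$ by its inverse; I would track this bookkeeping carefully.

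To get equivariance I would use the uniqueness characterisation of Jordan decomposition (Lusztig, Digne--Michel). On uniform functions, $J_s$ is forced by the rule that $\langle\chi,R_{\mathbf{T}}^{\mathbf{G}}(\theta)\rangle$ equals $\pm\langle J_s(\chi),R_{\mathbf{T}^*}^{\mathbf{C}(s)}(1)\rangle$. Galois acts by ${}^\sigma R_{\mathbf{T}}^{\mathbf{G}}(\theta)=R_{\mathbf{T}}^{\mathbf{G}}(\theta^{r_\sigma})$, and the pair $(\mathbf{T},\theta^{r})$ corresponds to $(\mathbf{T}^*,s^{r})$. Consequently the map $\chi\mapsto {}^{h^{-1}}\bigl({}^{\sigma^{-1}}J_{s^{r}}({}^\sigma\chi)\bigr)$ is again a bijection onto the unipotent characters of $\mathbf{C}(s)$ with the same multiplicity properties. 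It also satisfies the remaining normalisations: compatibility with Lusztig induction from Levi subgroups, and with central characters and Frobenius eigenvalues. So it is again a valid Jordan decomposition.

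The main obstacle is that these conditions do \emph{not} pin $J_s$ down in general. Non-uniqueness occurs when $\mathbf{C}(s)$ has components of exceptional type whose unipotent characters are not determined by uniform projections (families in $E_7$, $E_8$ and similar), and there the unipotent characters themselves can have irrational values. My plan is to work orbit by orbit under $\langle\sigma\rangle$ acting on conjugacy classes of $s$. In each orbit I would fix $J_s$ for one representative $s$ and then \emph{define} $J_{s^{r}}$ by the displayed formula. I would check that this is well defined, i.e.\ consistent when $s^{r}$ is conjugate to $s$, using Geck's and Lusztig's description of the Galois action on unipotent characters and Frobenius eigenvalues. For the general and unitary groups that matter in this paper every $\mathbf{C}(s)$ is a product of groups of type $A$. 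Then unipotent characters are rational and uniquely determined by uniform projections, so equivariance is automatic.
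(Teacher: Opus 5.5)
The paper gives no argument for this statement; it simply quotes Srinivasan--Vinroot, Corollary~5.1. Your opening reductions are fine: passing from $L$ to $K$, getting condition (1) from the action on Lusztig series, and showing ${}^{h_0}\nu$ is independent of $h_0$. For $\GL^\e_n(q)$, the case the paper relies on, your last sentence does finish the proof. There every unipotent character of every $\mathbf{C}_{\mathbf{G}^*}(s)^{F^*}$ is a rational uniform function, so $J_s$ is forced by uniform projections and must agree with its Galois twist.

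\textbf{Gap for general $\mathbf{G}$.} For a general $\mathbf{G}$ with connected center, Galois-equivariance of Jordan decomposition \emph{is} the Srinivasan--Vinroot theorem, and the proposal does not establish it.
\begin{enumerate}
\item You assert without argument that the twisted map respects Lusztig induction and Frobenius eigenvalues. The Frobenius-eigenvalue part is the genuinely hard point. A Galois automorphism does not simply conjugate Frobenius eigenvalues: its action on $\sqrt{q}$ intervenes, e.g.\ for the cuspidal unipotent characters of $E_7(q)$, whose values lie in $\Q(\sqrt{-q})$. This is where Geck's results are needed.
\item You then abandon uniqueness and propose to \emph{define} $J_{s^{r}}$ (writing $r=r_\sigma$) by transport along $\langle\sigma\rangle$-orbits. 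But the well-definedness you postpone is the whole theorem. When $s^{r}$ is conjugate to $s$, it requires the already chosen $J_s$ to intertwine two actions of the stabilizer of the class of $s$: the Galois action on $\mathcal{E}(\mathbf{G}^F,s)$, and the twisted action $\nu\mapsto{}^{h^{-1}}({}^\sigma\nu)$ on $\mathcal{E}(\mathbf{C}_{\mathbf{G}^*}(s)^{F^*},1)$. That is a constraint on $J_s$ itself, not something a choice of orbit representatives can arrange.
\item Your premise is also off. For connected center, Digne--Michel showed that the full list of normalizations determines $J$ uniquely. The list is: uniform projections, compatibility with Lusztig induction, with central twists and with Frobenius eigenvalues, plus extra conditions for type $E_8$. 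The workable route is to check that each of these is stable under the Galois twist and then invoke that uniqueness.
\end{enumerate}

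\textbf{The $h_0$ versus $h_0^{-1}$ bookkeeping.} Since $s$ and $s^{r}$ are powers of one another, $\mathbf{C}_{\mathbf{G}^*}(s)=\mathbf{C}_{\mathbf{G}^*}(s^{r})$. So the correct equivariance statement is $J_{s^{r}}({}^\sigma\chi)={}^\sigma J_s(\chi)$, with no conjugation. The element $h_0$ enters only through the normalization $J_{hsh^{-1}}={}^h\circ J_s$, where ${}^h\nu(x)=\nu(h^{-1}xh)$. From $h_0s_0h_0^{-1}=s_0^{r}$ one gets $J_{s_0}({}^\sigma\chi)={}^{h_0^{-1}}({}^\sigma\nu)$. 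Hence ${}^\sigma\chi=\chi$ if and only if ${}^\sigma\nu={}^{h_0}\nu$, which is exactly (2).

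Your displayed formula, read together with $J_{hsh^{-1}}={}^h\circ J_s$, would instead give ${}^\sigma\chi=\chi\iff{}^\sigma\nu=\nu$. That is false already in $\GL_n(q)$ when $h_0$ swaps isomorphic factors of the centralizer carrying different unipotent characters, which is the situation in Proposition~\ref{reduction}. Your derivation avoided this only by silently identifying $J_{s_0^{r}}$ with $J_{s_0}$.
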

\begin{proof}
    This is \cite[Corollary 5.1]{SV20}.
\end{proof} 

Throughout the remainder of this article, we take Jordan decomposition maps to be as in Theorem \ref{SV20} when discussing cases where $\mathbf{G}$ has connected center. If $\mathbf{G}$ is such a group and $\chi \in \mathcal{E}(\mathbf{G}^F,s)$ such that $J_s(\chi) = 1,$ we call $\chi$ a \textit{semisimple character} (note that, in the case where $\mathbf{Z}(\mathbf{G})$ is not connected, this definition of semisimple character is not equivalent to the general definition. See \cite[Section 2.6]{GM20}).

\subsection{Central Characters}

As we are concerned with simple groups of the form $G/\mathbf{Z}(G)$ for finite groups of Lie type $G$, we now collect results on the values of irreducible characters of groups of Lie type on their centers.
\begin{proposition}\label{omega}
    Let $\mathbf{G}$ be a connected reductive algebraic group and $F:\mathbf{G}\rightarrow \mathbf{G}$ a Frobenius map. Further, let $(\mathbf{G}^*,F^*)$ be dual to the pair $(\mathbf{G},F)$.
         Then for any  semisimple element  $s \in 
        (\mathbf{G}^*)^F $ there exist a character $\omega(s) \in \Irr(\mathbf{Z}(\mathbf{G})^F)$, depending only on the conjugacy class of $s$, such that for all $\chi \in  \mathcal{E}(\mathbf{G}^F,s)$, we have $\operatorname{Res}^{\mathbf{G}^F}_{\mathbf{Z}(\mathbf{G}^F)} \chi = \chi(1)\omega(s)$. In particular, if the pair $(\mathbf{T},\theta)$, with $\mathbf{T}$ an $F$-stable maximal torus in $\mathbf{G}$ and $\theta \in \Irr(\mathbf{T}^F)$, are in the geometric conjugacy class corresponding to the conjugacy class of $s$, then $\omega(s) = \theta_{\mathbf{Z}(\mathbf{G}^F)}.$
        \end{proposition}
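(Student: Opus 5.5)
The plan is to reduce everything to Deligne--Lusztig characters and then apply the character formula on central elements. By \cite[Proposition 2.3.? / Theorem 2.2.?]{GM20} (the orthogonality and decomposition results for Deligne--Lusztig characters), every $\chi \in \mathcal{E}(\mathbf{G}^F,s)$ occurs as a constituent of some $R_{\mathbf{T}}^{\mathbf{G}}(\theta)$ with $(\mathbf{T},\theta)$ in the geometric conjugacy class corresponding to $s$. The definition of rational series is stated in terms of rational conjugacy. However, any two pairs $(\mathbf{T},\theta)$, $(\mathbf{T}',\theta')$ corresponding to the same rational class of $s$ restrict to the same character of $\mathbf{Z}(\mathbf{G})^F$. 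This is because $\mathbf{Z}(\mathbf{G})^F \le \mathbf{T}^F$ for every maximal torus, and conjugation fixes central elements. So it suffices to show that $\theta|_{\mathbf{Z}(\mathbf{G})^F}$ depends only on the class of $s$.

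The key computation is the character formula for $R_{\mathbf{T}}^{\mathbf{G}}(\theta)$ at an element $zu$ with $z$ central and semisimple and $u$ unipotent (\cite[Theorem 2.2.16]{GM20}). Since $z$ is central, $C_{\mathbf{G}}(z)=\mathbf{G}$. The formula then gives
\[
R_{\mathbf{T}}^{\mathbf{G}}(\theta)(zu) = \theta(z)\, Q_{\mathbf{T}}^{\mathbf{G}}(u),
\]
where $Q$ is the Green function. Specializing to $u=1$ yields $R_{\mathbf{T}}^{\mathbf{G}}(\theta)(z)=\theta(z)R_{\mathbf{T}}^{\mathbf{G}}(\theta)(1)$. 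Hence $\mathbf{Z}(\mathbf{G})^F$ acts by the scalar $\theta(z)$ on the virtual module affording $R_{\mathbf{T}}^{\mathbf{G}}(\theta)$. We realise this module via $\ell$-adic cohomology of the Deligne--Lusztig variety $Y$, on which central elements act through $\mathbf{T}^F$ (right action). By this realisation, every irreducible constituent $\chi$ has central character $\theta|_{\mathbf{Z}(\mathbf{G})^F}$, and therefore $\operatorname{Res}\chi=\chi(1)\,\theta_{\mathbf{Z}(\mathbf{G}^F)}$.

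The step I expect to be the main obstacle is the independence of the choice of $(\mathbf{T},\theta)$ within the class of $s$. Two pairs in the same geometric class satisfy $\theta'\circ N_{F^n/F} = {}^g(\theta\circ N_{F^n/F})$ for some $g$. I would first try to handle this by restricting to $\mathbf{Z}(\mathbf{G})^F$ and using that the norm map $N_{F^n/F}$ on the torus surjects onto $\mathbf{T}^F$. Restricted to the center it is compatible, since the preimage of $\mathbf{Z}(\mathbf{G})^F$ contains central elements of $\mathbf{Z}(\mathbf{G})^{F^n}$. Alternatively, the duality description of $\theta$ via $s \in \mathbf{T}^{*F^*}$ shows directly that $\theta|_{\mathbf{Z}(\mathbf{G})^F}$ is the image of $s$ under the natural map $\mathbf{T}^{*F^*}\to \Irr(\mathbf{Z}(\mathbf{G})^F)$. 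This map is dual to the inclusion of the center and is invariant under $\mathbf{G}^{*F^*}$-conjugation, so it depends only on the class of $s$. I would take this second approach, defining $\omega(s)$ through it.
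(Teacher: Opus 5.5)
\textbf{Where the proposal stands.} The paper gives no argument of its own here; it quotes \cite[Lemma 2.2]{Mal07}. The first half of your plan is the standard argument and is fine. In $H^i_c(Y)_\theta$ the left action of $z\in\mathbf Z(\mathbf G)^F$ coincides with the right action of $z\in\mathbf T^F$, so every constituent of $R_{\mathbf T}^{\mathbf G}(\theta)$ has central character $\theta|_{\mathbf Z(\mathbf G)^F}$.

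The preceding ``Hence'', however, does not follow from $R_{\mathbf T}^{\mathbf G}(\theta)(z)=\theta(z)R_{\mathbf T}^{\mathbf G}(\theta)(1)$ alone. That identity says nothing about the constituents of a virtual character. If you want to avoid cohomology, use instead that the character formula gives $R_{\mathbf T}^{\mathbf G}(\theta)(zg)=\theta(z)R_{\mathbf T}^{\mathbf G}(\theta)(g)$ for \emph{all} $g\in\mathbf G^F$, since $C^\circ_{\mathbf G}(zs)=C^\circ_{\mathbf G}(s)$. Then $\langle R_{\mathbf T}^{\mathbf G}(\theta),\chi\rangle=\theta(z)\overline{\omega_\chi(z)}\langle R_{\mathbf T}^{\mathbf G}(\theta),\chi\rangle$, where $\omega_\chi$ is the central character of $\chi$.

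\textbf{The gap.} It is the step you flag as the main obstacle: showing that $\theta|_{\mathbf Z(\mathbf G)^F}$ is the same for all pairs $(\mathbf T,\theta)$ dual to some $(\mathbf T^*,s)$ with $s$ fixed. The element $s$ lies in many $F^*$-stable maximal tori of $C^\circ_{\mathbf G^*}(s)$ that are not conjugate under $C_{\mathbf G^*}(s)^{F^*}$; for central $s$ it lies in all of them. Neither ``conjugation fixes central elements'' nor ``the map $\mathbf T^{*F^*}\to\Irr(\mathbf Z(\mathbf G)^F)$ is invariant under $\mathbf G^{*F^*}$-conjugation'' compares the values at $s$ computed in two such tori. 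That map is defined torus by torus, and what must be shown is exactly that these maps glue to a single function on $\mathbf G^{*F^*}$.

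\textbf{Your first route is false in general.} It would make $\theta|_{\mathbf Z(\mathbf G)^F}$ constant on geometric classes. Take $\mathbf G=\SL_2$ with $q$ odd. The order-$2$ characters of the split and non-split tori correspond to geometrically conjugate involutions of $\PGL_2(q)$, yet they take the values $(-1)^{(q-1)/2}\neq(-1)^{(q+1)/2}$ at $-I$. The flaw is that $N_{F^n/F}$ need not map $\mathbf Z(\mathbf G)^{F^n}$ onto $\mathbf Z(\mathbf G)^F$; here $N_{F^2/F}(-I)=I$. Surjectivity on $\mathbf T^{F^n}$ does not help, because ${}^g$ is harmless only on central elements. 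The same example shows that the ``in particular'' clause must be read with $(\mathbf T,\theta)$ dual to $(\mathbf T^*,s')$ for some $s'$ rationally conjugate to $s$.

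\textbf{How to repair it.}
\begin{enumerate}
\item When $\mathbf Z(\mathbf G)$ is connected, it is a torus and the norm is onto, so your first route works (compare Lemma \ref{dualgeometry} and the proof of Proposition \ref{stthings}).
\item In general, take a regular embedding $\mathbf G\hookrightarrow\tilde{\mathbf G}$ and fix a rational lift $\tilde s$ of $s$. Each $\theta$ is then the restriction of some $\tilde\theta$ dual to $(\tilde{\mathbf T}^*,\tilde s)$. Apply the connected-center case to these $\tilde\theta$ and use $\mathbf Z(\mathbf G)^F\le\mathbf Z(\tilde{\mathbf G})^F$.
\item Alternatively, make ``dual to the inclusion of the center'' precise via the linear characters $\hat z$ of $\mathbf G^{*F^*}$, for $z\in\mathbf Z(\mathbf G)^F$, whose restriction to every $\mathbf T^{*F^*}$ is the character dual to $z\in\mathbf T^F$ (cf.\ the dual form of \cite[Proposition 11.4.12]{DM20}). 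Then $\omega(s)(z)=\hat z(s)$ is manifestly independent of the torus.
\end{enumerate}
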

        \begin{proof}
            This is \cite[Lemma 2.2]{Mal07}.
        \end{proof}

        The next proposition in a strengthening of \cite[Lemma 4.4]{NT13} and our argument follows the proof of that result closely.

        \begin{proposition}\label{NT13}
            Let ($\mathbf{G},F)$ and $(\mathbf{G}^*,F^*)$ be as in the statement of Proposition \ref{omega}. Denote $G := \mathbf{G}^F$ and $G^* := (\mathbf{G}^*)^{F^*}$. Let $s \in \mathbf{O}^{p'}(G^*)$ be semisimple  and let  $\omega(s)$ be  as in Proposition \ref{omega}. Then $\omega(s)$ is the principal character.
        \end{proposition}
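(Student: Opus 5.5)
The plan is to identify $\omega(s)$ through the standard duality between $\mathbf{Z}(\mathbf{G})^F$ and characters of $G^*$, and then use that $s$ lies in $\mathbf{O}^{p'}(G^*)$.

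\textbf{Step 1: an injection $\hat z$.} By Deligne--Lusztig duality (cf.\ \cite[Section 11]{DM} or the discussion around \cite[Proposition 2.5.20]{GM20}), each $z\in\mathbf{Z}(\mathbf{G})^F$ determines a linear character $\hat z$ of $G^*$. This gives an injective homomorphism
$$\mathbf{Z}(\mathbf{G})^F\hookrightarrow \Irr(G^*/[G^*,G^*]).$$

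\textbf{Step 2: the key formula.} For every semisimple $s\in G^*$ and $z\in \mathbf{Z}(\mathbf{G})^F$ we have
$$\omega(s)(z)=\hat z(s).$$
To prove this, use the last assertion of Proposition~\ref{omega}. Choose $(\mathbf{T},\theta)$ in the geometric class of $s$, with $\mathbf{T}$ dual to $\mathbf{T}^*\ni s$, so that $\omega(s)=\theta|_{\mathbf{Z}(G)}$. The isomorphism $\Irr(\mathbf{T}^F)\cong \mathbf{T}^{*F^*}$ sends $\theta\mapsto s$. It is built from the pairing $X(\mathbf{T})\times Y(\mathbf{T})$, and $\hat z$ is built from the same pairing. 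So evaluating $\theta$ at $z\in \mathbf{Z}(\mathbf{G})^F\subseteq \mathbf{T}^F$ equals evaluating the corresponding character of $\mathbf{T}^{*F^*}$ at $s$.

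It remains to check that this character of $\mathbf{T}^{*F^*}$ is the restriction of $\hat z$. This is exactly the compatibility established in the proof of \cite[Lemma 4.4]{NT13}, and I would follow it, tracking choices via \cite[Proposition 11.4.12]{DM} or its analogue in \cite{GM20}. This compatibility is the main technical obstacle: one must ensure the identifications are independent of the chosen torus, and that conjugating $s$ does not change $\hat z(s)$, which holds because $\hat z$ is a class function.

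\textbf{Step 3: conclusion.} Since $\hat z$ is a linear character of $G^*$ and $z\in \mathbf{Z}(\mathbf{G})^F$ has order prime to $p$, the order of $\hat z$ is also prime to $p$. Hence $\ker\hat z$ is a normal subgroup of $G^*$ whose quotient is an abelian $p'$-group. By the definition of $\mathbf{O}^{p'}$, we get $\mathbf{O}^{p'}(G^*)\le\ker\hat z$. Thus for $s\in\mathbf{O}^{p'}(G^*)$ we have $\hat z(s)=1$ for all $z$, and by Step~2, $\omega(s)(z)=1$ for all $z\in\mathbf{Z}(\mathbf{G})^F$. Therefore $\omega(s)$ is principal.

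The only point where this goes beyond \cite[Lemma 4.4]{NT13} is that there one uses the stronger hypothesis $s\in[G^*,G^*]$. Here we only need that $\hat z$ has $p'$-order, and that the formula $\omega(s)(z)=\hat z(s)$ from Step~2 is proved uniformly for all semisimple $s$.
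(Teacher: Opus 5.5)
Your proposal is correct and is essentially the paper's argument. Both run the duality argument from the proof of \cite[Lemma 4.4]{NT13}, computing $\omega(s)(z)$ for $z\in\mathbf{Z}(\mathbf{G})^F$ through the linear character $\hat z$ of $G^*$ attached to $z$, with $[G^*,G^*]$ replaced by $\mathbf{O}^{p'}(G^*)$. The only difference is cosmetic: you get $\mathbf{O}^{p'}(G^*)\le\ker\hat z$ from $\hat z$ having $p'$-order, whereas the paper cites the remarks before \cite[Proposition 2.5.20]{GM20} together with the count $|G^*/\mathbf{O}^{p'}(G^*)|=|\mathbf{Z}(G)|$ from \cite[Proposition 11.4.12]{DM20}.
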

        \begin{proof}
           Let $\mathbf{T}$ be any $F$-stable maximal torus of $\mathbf{G}$ and let $z\in \mathbf{Z}(\mathbf{G}^*)^{F^{*}}$. Then, the character in $  \mathcal{E}(G,z)$ corresponding to the trivial character under a choice of  Jordan decomposition map must be a linear character. In fact, the remarks preceding \cite[Proposition 2.5.20]{GM20} imply that this character must be trivial on $\mathbf{O}^{p'}(G^*)$. Thus, we can argue as in the first paragraph of the proof of \cite[Lemma 4.4]{NT13} to obtain that, if the pair $(\mathbf{T},\theta_z)$ is in the geometric conjugacy class corresponding to $z$, then $\theta_z$ is trivial on $\mathbf{T^*}^{F^*} \cap \mathbf{O}^{p'}(G^*)$. Since \cite[Proposition 11.4.12]{DM20} implies that $|G^*/\mathbf{O}^{p'}(G^*)| = |\mathbf{Z}(G)|$, we can argue as in the proof of \cite[Lemma 4.4]{NT13} to see that as $z$ varies over $\mathbf{Z}(\mathbf{G}^*)^{F^{*}}$, $\theta_z$ varies over $\Irr(\mathbf{T}^F/\mathbf{O}^{p'}(G)\cap \mathbf{T}^F)$. The remainder of the proof follows exactly as in the proof of \cite[Lemma 4.4]{NT13}.
        \end{proof}

\begin{lemma}\label{dualgeometry}
    Let $\mathbf{G}$ be a connected reductive algebraic group with connected center and $F:\mathbf{G}\rightarrow\mathbf{G}$ a Frobenius endomorphism. Let $(\mathbf{G}^*,F^*)$ be a dual pair to $(\mathbf{G},F)$. Further let $\mathbf{T}$ be an $F$-stable maximal torus and $\mathbf{T}^*$ its dual torus.  Then the following Diagram commutes:

    \[\begin{tikzcd}
(\mathbf{T}^*)^{(F^{*d})} \arrow{r}{}  & \Irr(\mathbf{Z}(\mathbf{G})^{F^d})  \\
(\mathbf{T}^*)^{F^{*}} \arrow{r}{} \arrow[u,hook,"i"]& \Irr(\mathbf{Z}(\mathbf{G})^{F}) \arrow{u}{- \circ N_{F^d/F}}
\end{tikzcd}
\]
where $i$ is the inclusion map, $N_{F^d/F}$ is the norm map given by $N_{F^d/F}(t) = tF(t)...F^{d-1}(t)$ and the the remaining two maps are the maps described in Proposition \ref{omega}. 
\end{lemma}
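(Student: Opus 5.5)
We first make the horizontal maps explicit. By Proposition \ref{omega}, the map $(\mathbf{T}^*)^{F^*}\to \Irr(\mathbf{Z}(\mathbf{G})^F)$ sends $s$ to $\theta_{\mathbf{Z}(\mathbf{G})^F}$, where $\theta\in\Irr(\mathbf{T}^F)$ is the character attached to $s$ by the standard isomorphism $(\mathbf{T}^*)^{F^*}\cong \Irr(\mathbf{T}^F)$ coming from the duality of tori (\cite[Proposition 2.4.?]{GM20}, i.e.\ the construction via $X(\mathbf{T})\cong Y(\mathbf{T}^*)$ and the chosen root of unity isomorphism). Note that $\mathbf{Z}(\mathbf{G})\subseteq \mathbf{T}$ for every maximal torus. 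The same description holds for the top row with $F^d$ in place of $F$, since $\mathbf{T}$ is also $F^d$-stable and $(\mathbf{T}^*,F^{*d})$ is dual to $(\mathbf{T},F^d)$. Thus the commutativity of the diagram reduces to the following statement about tori: if $s\in(\mathbf{T}^*)^{F^*}$ corresponds to $\theta\in\Irr(\mathbf{T}^F)$, then $s$, viewed in $(\mathbf{T}^*)^{F^{*d}}$, corresponds to $\theta\circ N_{F^d/F}\in\Irr(\mathbf{T}^{F^d})$. Restricting to $\mathbf{Z}(\mathbf{G})^{F^d}$, whose image under $N_{F^d/F}$ lies in $\mathbf{Z}(\mathbf{G})^F$ because $\mathbf{Z}(\mathbf{G})$ is $F$-stable, then gives exactly the diagram.

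The key step is therefore this compatibility of the torus duality with norm maps. It is a standard fact; see \cite[Proposition 13.7]{DM20} or the corresponding statement in \cite[Section 2.3]{GM20}. I would prove it directly from the construction. One identifies $\mathbf{T}^F\cong Y(\mathbf{T})/(F-1)Y(\mathbf{T})$ via $y\mapsto N_{F^e/F}(y(\zeta))$ for a suitable $e$ and a fixed root of unity $\zeta$. The dual side is identified in the same way using $X(\mathbf{T})=Y(\mathbf{T}^*)$ and $F^*$ the transpose of $F$. Under these identifications, inclusion $(\mathbf{T}^*)^{F^*}\hookrightarrow(\mathbf{T}^*)^{F^{*d}}$ corresponds on the lattice side to multiplication by $1+F^*+\dots+F^{*(d-1)}$, and the norm $N_{F^d/F}$ on $\mathbf{T}$ corresponds to the natural projection $Y/(F^d-1)Y\to Y/(F-1)Y$. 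These two operations are adjoint under the pairing defining the duality, so the characters agree. The equality $\theta(N(t))=\theta'(t)$ is then a direct computation with the pairing $\langle\,,\rangle$ and the chosen roots of unity, which must be taken compatibly: the $(q^{d}-1)$-th root is a power of the $(q^{de}-1)$-th one.

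The main obstacle is bookkeeping the choices of roots of unity and the exponent $e$ consistently across $F$ and $F^d$. I would handle this by choosing one sufficiently divisible $e$ for $F^d$ that works for both and checking the transpose identity $\langle x, F^*y\rangle$-compatibility. Connected center is not really needed for this argument. It only ensures that Proposition \ref{omega}'s maps are the ones used later, and that $\mathbf{Z}(\mathbf{G})\subseteq\mathbf{T}$ poses no issue.
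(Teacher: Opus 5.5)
Your argument is correct, but it takes a different route from the paper's.

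\textbf{The paper's route.} It never proves anything on all of $\mathbf{T}^{F^d}$. It models $\mathbf{T}^F$ and $\mathbf{T}^{F^d}$ as $\mathbf{T}_0[w_0]$ and $\mathbf{T}_0[w_1]$ inside a maximally split torus. It describes $\psi(s)$ and $\psi_d(s)$ through a single pair $(\lambda,n)$, via $\lambda_0(t^n)=\lambda(F(t)w_0^{-1}t^{-1}w_0)$ and $\lambda_1(t^n)=\lambda(F^d(t)w_1^{-1}t^{-1}w_1)$. It then evaluates only at $z\in\mathbf{Z}(\mathbf{G})^{F^d}$.

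The connected-center hypothesis is used exactly at this point. Because $\mathbf{Z}(\mathbf{G})$ is a torus, one can choose $z_0\in\mathbf{Z}(\mathbf{G})$ with $z_0^n=z$. Then $z_0$ commutes with $w_0$ and $w_1$, so both twisted Lang-type maps lose their twist. The telescoping identity $F^d(z_0)z_0^{-1}=F(N_{F^d/F}(z_0))N_{F^d/F}(z_0)^{-1}$ then gives $\lambda_1(z)=\lambda_0(N_{F^d/F}(z))$.

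\textbf{Your route.} You prove the torus-level compatibility $\psi_d(s)=\psi(s)\circ N_{F^d/F}$ on all of $\mathbf{T}^{F^d}$ and then restrict. For the restriction you need only $\mathbf{Z}(\mathbf{G})\subseteq\mathbf{T}$ and $N_{F^d/F}(\mathbf{Z}(\mathbf{G})^{F^d})\subseteq\mathbf{Z}(\mathbf{G})^F$. This avoids the Weyl-group twists entirely. As you observe, it also shows the connected-center assumption is unnecessary for this lemma.

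\textbf{What each buys.} The paper gets a self-contained computation inside the Geck--Malle parametrization, without calling on the general norm-compatibility of the torus duality. It pays for this with the connected-center hypothesis. Your route is more general but relies on that compatibility, either cited or proved.

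Your lattice bookkeeping is sound. Take one $e$ divisible by $d$ with $F^e$ split, and one $\zeta$. Telescoping norms then shows the inclusion $(\mathbf{T}^*)^{F^*}\hookrightarrow(\mathbf{T}^*)^{F^{*d}}$ is multiplication by $1+F^*+\cdots+F^{*(d-1)}$. The paper's fixed compatible system of roots $\zeta_n$ makes the choice of $e$ irrelevant.

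You can in fact drop the adjointness step. Use the model $\Irr(\mathbf{T}^F)\cong X(\mathbf{T})/(F-1)X(\mathbf{T})$ given by restriction of algebraic characters, with $X(\mathbf{T})=Y(\mathbf{T}^*)$. The class of $x$ then maps under inclusion to $\sum_{j<d}x\circ F^j$. For $t\in\mathbf{T}^{F^d}$ one has $\bigl(\sum_{j<d}x\circ F^j\bigr)(t)=x(N_{F^d/F}(t))$, which is the whole claim.
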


\begin{proof}
    Let $s \in (\mathbf{T}^*)^{F^*}$. Using \cite[Lemma 2.5.7]{GM20} we obtain  canonical isomorphisms $\psi:(\mathbf{T}^*)^{(F^*)}\rightarrow \Irr(\mathbf{T}^{F})$ and $\psi_d: (\mathbf{T}^*)^{(F^{*d})}\rightarrow \Irr(\mathbf{T}^{F^d})$. Let $\mathbf{T}_0$ be a maximally split torus in $\mathbf{G}$.  Then there exists a pair $(\lambda,n) \in \Irr(\mathbf{T}_0) \times \Z^{+}$ as in \cite[2.4.5]{GM20} such that $s$ corresponds to the pair $(\lambda,n)$  as described in  \cite[Lemma 2.5.7]{GM20}. Then, if we identify $\mathbf{T}^F$ with $\mathbf{T}_0[w_0]$ for appropriate choice of $w_0\in \mathbf{N}_\mathbf{G}(\mathbf{T})$ (see \cite[1.6.4]{GM20}), the image of $s$ under $\psi$ is $\lambda_0|_{(\mathbf{T})^{F}}$, where $\lambda_0 \in \Irr(\mathbf{T_0})$ and for all $t \in \mathbf{T}$ we have $\lambda_0(t^n) = \lambda(F(t)w_0^{-1}t^{-1}w_0)$. Similarly  the image of $s$ under $\psi_d$ is $\lambda_1|_{(\mathbf{T})^{F^{d}}}$, where $\lambda_1 \in \Irr(\mathbf{T})$ such that for all $t \in \mathbf{T}$ we have $\lambda_1(t^n) = \lambda(F^d(t)w_1^{-1}t^{-1}w_1)$ for some $w_1 \in \mathbf{N}_\mathbf{G}(\mathbf{T})$. Let $z \in \mathbf{Z}(\mathbf{G})^{F^d}$. Then, since $\mathbf{Z}(\mathbf{G})$ is connected, it is a torus. Since we have assumed the field over which we are working is algebraically closed, this implies there exists $z_0 \in \mathbf{Z}(\mathbf{G}) \le \mathbf{T}$ such that $z_0^n = z$. Then, 
    \begin{align*}
    \lambda_1(z) &= \lambda(F^d(z_0)w_1^{-1}z_0^{-1}w_1)\\ &= \lambda(F^d(z_0)z_0^{-1}) \\
    &= \lambda(F^d(z_0)(F(z_0)...F^{d-1}(z_0))(F(z_0)...F^{d-1}(z_0))^{-1}z_0^{-1})\\
   &=\lambda(F(N_{F^d/F}(z_0))N_{F^d/F}(z_0)^{-1})\\ 
   &= \lambda(F(N_{F^d/F}(z_0))w_0^{-1}N_{F^d/F}(z_0)^{-1}w_0)\\
   &= \lambda_0\circ N_{F^d/F}(z)
    \end{align*}
    with the second to last equality following from the fact that $N_{F^d/F}$ maps central elements to central elements. This implies the result.
\end{proof}

We note that the following is \cite[Proposition 2.7 (iii)]{ST23}; however, since the proof of that result is largely left to the reader, we provide a complete proof here.

\begin{proposition}\label{stthings}
    Let $\mathbf{G}$ be a connected reductive algebraic group and $F:\mathbf{G}\rightarrow \mathbf{G}$ a Frobenius endomorphism. Further let $(\mathbf{G}^*,F^*)$ be dual to the pair $(\mathbf{G},F)$. 
     There is a well defined group isomorphism $\Phi:(\mathbf{G}^*)^{F^*}/\mathbf{O}^{p'}((\mathbf{G}^*)^{F^*}) \rightarrow \Irr(\mathbf{Z}(\mathbf{G}^F))$ given by $s\mathbf{O}^{p'}((\mathbf{G}^*)^{F^*}) \mapsto \omega(s)$, where $s \in (\mathbf{G}^*)^{F^*}$ is a semisimple coset representative and $\omega(s)$ is as described in Proposition \ref{omega}.
\end{proposition}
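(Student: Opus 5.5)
We need to show four things: the map $s \mapsto \omega(s)$ is well defined on cosets of $\mathbf{O}^{p'}(G^*)$, it is a homomorphism, it is injective, and it is surjective. Write $G=\mathbf{G}^F$ and $G^*=(\mathbf{G}^*)^{F^*}$. The cardinality comparison $|G^*/\mathbf{O}^{p'}(G^*)| = |\mathbf{Z}(G)|$ from \cite[Proposition 11.4.12]{DM20}, already used in the proof of Proposition \ref{NT13}, will reduce bijectivity to injectivity.

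\textbf{Step 1: each coset has a semisimple representative.} Since $G^*/\mathbf{O}^{p'}(G^*)$ is a $p'$-group, we write any $g\in G^*$ as $g=su$ with $s$ its semisimple part and $u$ its unipotent part. The element $u$ has $p$-power order, so $u\in\mathbf{O}^{p'}(G^*)$. Hence every coset contains a semisimple element.

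\textbf{Step 2: $\omega$ is a homomorphism on semisimple elements.} By Proposition \ref{omega}, $\omega(s)=\theta|_{\mathbf{Z}(G)}$ whenever $(\mathbf{T},\theta)$ corresponds to $s$. For semisimple $s,t\in G^*$ we use the following fact: if $\mathbf{T}^*$ is an $F^*$-stable maximal torus containing $s$, then under the isomorphism $(\mathbf{T}^*)^{F^*}\cong\Irr(\mathbf{T}^F)$ of \cite[Lemma 2.5.7]{GM20} the restriction to $\mathbf{Z}(G)\le \mathbf{T}^F$ is independent of the chosen torus. This should follow from the fact that geometric conjugacy preserves $\omega$.

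From this we get multiplicativity when $s,t$ lie in a common torus. To handle general $s,t$, we first establish that $\omega$ factors through $G^*/\mathbf{O}^{p'}(G^*)$, as follows:
\begin{itemize}[label={}]
\item[(a)] Take $s'=sx$ with both semisimple and $x\in\mathbf{O}^{p'}(G^*)$.
\item[(b)] Apply Proposition \ref{NT13} in a refined form: the argument there shows that $\theta_z$ is trivial on $\mathbf{T}^F\cap \mathbf{O}^{p'}(G)$, and dually that $\omega$ kills elements of $\mathbf{O}^{p'}(G^*)$.
\item[(c)] Combine this with multiplicativity inside a torus.
\end{itemize}
Carrying this out cleanly is the \textbf{main obstacle}, since $s$ and $x$ need not commute.

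To get around non-commuting elements, I would use the map $\mathbf{Z}(\mathbf{G}^*)^{F^*}$-type description from Proposition \ref{NT13}. Every coset of $\mathbf{O}^{p'}(G^*)$ meets $(\mathbf{T}^*)^{F^*}$ for any fixed maximally split torus $\mathbf{T}^*$: a Sylow/Bruhat argument shows $G^* = \mathbf{T}^{*F^*}\mathbf{O}^{p'}(G^*)$, because $\mathbf{O}^{p'}(G^*)$ contains all unipotent radicals and hence the subgroup they generate. This lets us define the map on the fixed torus $(\mathbf{T}^*)^{F^*}$, where $\omega$ is the composite
\[
(\mathbf{T}^*)^{F^*}\cong\Irr(\mathbf{T}^F)\to\Irr(\mathbf{Z}(G)),
\]
which is visibly a homomorphism. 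Its kernel contains $(\mathbf{T}^*)^{F^*}\cap\mathbf{O}^{p'}(G^*)$ by Proposition \ref{NT13}. Finally, well-definedness for arbitrary semisimple representatives follows because Proposition \ref{omega} says $\omega(s)$ depends only on the class of $s$. Indeed, if semisimple $s$ and $t\in\mathbf{T}^*$ lie in the same coset, we will show $\omega(s)=\omega(t)$ by conjugating $s$ into some torus $\mathbf{S}^*$ and transporting via a $\mathbf{G}^*$-element. The restriction maps to $\Irr(\mathbf{Z}(G))$ are compatible because $\mathbf{Z}(\mathbf{G})$ is central, so conjugation acts trivially on it.

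\textbf{Step 3: bijectivity.} Surjectivity follows because the restriction $\Irr(\mathbf{T}^F)\to\Irr(\mathbf{Z}(G))$ is surjective for abelian groups. The induced map $(\mathbf{T}^*)^{F^*}/((\mathbf{T}^*)^{F^*}\cap\mathbf{O}^{p'}(G^*))\to\Irr(\mathbf{Z}(G))$ is then surjective between groups which, by the order equality from \cite[Proposition 11.4.12]{DM20}, have at most the same size. Therefore it is a bijection, and hence $\Phi$ is an isomorphism.
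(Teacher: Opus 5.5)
There is a genuine gap, at exactly the point you flag as the main obstacle. Working on a fixed maximally split torus $\mathbf{T}^*$, you do get an isomorphism $\bar\psi\colon G^*/\mathbf{O}^{p'}(G^*)\to\Irr(\mathbf{Z}(G))$. By the class-invariance in Proposition \ref{omega}, together with the fact that $G^*/\mathbf{O}^{p'}(G^*)$ is abelian, $\bar\psi$ agrees with $\omega(s)$ for every semisimple $s$ that is $G^*$-conjugate into $(\mathbf{T}^*)^{F^*}$.

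The statement, however, needs $\Phi(s\mathbf{O}^{p'}(G^*))=\omega(s)$ for \emph{every} semisimple representative. Many semisimple elements lie only in tori that are not $G^*$-conjugate to $\mathbf{T}^*$; for example, elements of $\GL_2(q)$ with eigenvalues in $\mathbb{F}_{q^2}\setminus\mathbb{F}_q$.

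Your proposed transport does not handle these elements. Suppose $g\in\mathbf{G}^*$ satisfies $g\mathbf{S}^*g^{-1}=\mathbf{T}^*$. Then $g$ is not $F^*$-fixed. So $gsg^{-1}$ is generally not in $(\mathbf{T}^*)^{F^*}$, or even in $G^*$; it is fixed only by a Weyl-twisted Frobenius. Moreover, the hypothesis that $s$ and $t$ lie in the same coset of $\mathbf{O}^{p'}(G^*)$ gives no relation between $gsg^{-1}$ and $t$. Centrality of $\mathbf{Z}(\mathbf{G})$ is not the real issue.

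In fact your own argument shows that every $F^*$-stable maximal torus $\mathbf{S}^*$ yields an isomorphism $\bar\psi_{\mathbf{S}}$. The whole content of the proposition is that $\bar\psi_{\mathbf{S}}$ is independent of $\mathbf{S}$, and that is what remains unproved.

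The paper closes this gap by passing to a power of Frobenius.
\begin{enumerate}
\item Given semisimple $s,t$ in the same coset, choose $g\in\mathbf{G}^*$ with $t':=gtg^{-1}\in\mathbf{T}_s$, and choose $d$ with $F^{*d}(g)=g$.
\item In $(\mathbf{G}^*)^{F^{*d}}$, the elements $t$ and $t'$ are conjugate, and $s,t'$ lie in a common torus.
\item Also $s^{-1}t'=(s^{-1}t)(t^{-1}gtg^{-1})\in\mathbf{O}^{p'}((\mathbf{G}^*)^{F^{*d}})$, since the second factor is a commutator and the quotient by $\mathbf{O}^{p'}$ is abelian.
\item Multiplicativity on that torus, together with Proposition \ref{NT13} applied over $F^d$, gives $\omega_d(s)=\omega_d(t)$.
\item Lemma \ref{dualgeometry} (namely $\omega_d=\omega\circ N_{F^d/F}$ on $F^*$-fixed points of a torus) and surjectivity of $N_{F^d/F}$ onto $\mathbf{Z}(\mathbf{G})^F$ then descend this to $\omega(s)=\omega(t)$.
\end{enumerate}

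The descent in the last step uses that $\mathbf{Z}(\mathbf{G})$ is a torus. This is why the paper first treats the connected-center case and then reduces the general case through a regular embedding $\mathbf{G}\hookrightarrow\tilde{\mathbf{G}}$: it lifts semisimple representatives to $\tilde G^*$ and restricts central characters. If you adopt this fix, you need that reduction too; your proposal currently has neither step.

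Once well-definedness is established, your fixed-torus argument for the homomorphism property and bijectivity is fine. It is arguably cleaner than the paper's route through choosing a coset representative in a torus containing $s$.
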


\begin{proof}

We write $G := \mathbf{G}^F$ and $G^*:= (\mathbf{G^*})^{F^*}$.
We first consider the case where $\mathbf{G}$ has connected center.
    We begin by showing the map is well defined. Let $s$ and $t$ be semisimple coset representatives for the same coset of $\mathbf{O}^{p'}(G^*)$. Let $\mathbf{T}_s$ and $\mathbf{T}_t$ be $F^*$-stable maximal tori  such that $s \in\mathbf{T}_s$ and $t \in \mathbf{T}_t$. Then, since all maximal tori are conjugate, there exists some $g \in \mathbf{G}^*$ such that $t' := gtg^{-1} \in \mathbf{T}_s$ and there is some $d \in \mathbb{Z}^+$ such that $(F^*)^d(g) = g $. For the sake of clarity we write $\omega(s)$ to denote the image of $s$ under the map described in Proposition \ref{omega} when viewed as an element of $(\mathbf{G}^*)^{F^*}$ and $\omega_d(s)$ to denote the image of $s$ under the map described in Proposition \ref{omega} when viewed as an element of $(\mathbf{G}^*)^{F^{*d}}$. Then, since $\omega_d$ is constant on $(\mathbf{G}^*)^{F^{*d}}$-conjugacy classes,  we  have $\omega_d(t') = \omega_d(t)$ . Further, since $s$ and $t'$ lie in the same $F^*$-stable maximal torus, \cite[Lemma 2.5.7]{GM20} and Proposition \ref{omega} imply that $\omega_d(s)^{-1}\omega_d(t)=\omega_d(s^{-1})\omega_d(t') = \omega_{d}(s^{-1}t')$. Note that $s^{-1}t' = s^{-1}tt^{-1}gtg^{-1} \in \mathbf{O}^{p'}((\mathbf{G}^*)^{F^{*d}})$, because the commutator subgroup $((\mathbf{G}^*)^{F^{*d}})'$  is contained in $  \mathbf{O}^{p'}((\mathbf{G}^*)^{F^{*d}})$. Therefore, Proposition \ref{NT13} implies $\omega_{d}(s^{-1}t')$ is the trivial character. It follows that $\omega_d(s)  = \omega_d(t')=\omega_d(t)$. Then, Lemma \ref{dualgeometry} and the fact that  $N_{F^d/F}$ maps $\mathbf{Z}(\mathbf{G})^{F^d}$ onto $\mathbf{Z}(\mathbf{G})^{F}$ imply that $\omega(s) = \omega(t)$ as desired.

The map $\Phi$ is clearly surjective.  We further see that dualizing the argument in \cite[Proposition 11.4.12]{DM20} 
    gives that $ |G^*/\mathbf{O}^{p'}(G^*)| = |\mathbf{Z}(G)| = |\Irr(\mathbf{Z}(G))|$, so $\Phi$ must therefore be a bijection. Lastly to show that $\Phi$ is a homomorphism, let $s,t \in G^*$ be semisimple elements. Then by \cite[Proposition 1.5.13(c)]{GM20} we see that there exists a semisimple element $t' \in t\mathbf{O}^{p'}(G^*)$ such that $t'$ lies in an $F^*$-stable maximal torus containing  $s$. Then $\Phi(s)\Phi(t) = \omega(s)\omega(t)= \omega(s)\omega(t')$ by the preceding paragraph,  and, since $s$ and $t'$ lie in the same $F^*$-stable maximal torus, we can argue as above to get  $\omega(s)\omega(t') = \omega(st')$  . Further, since $t^{-1}s^{-1}st' = t^{-1}t'$ lies in  $\mathbf{O}^{p'}(G^*)$, we have $\omega(st) = \omega(st') =\omega(s)\omega(t') = \omega(s)\omega(t) $, which gives the claim.

 Now assume $\mathbf{G}$ does not have connected center. Let $\iota:\mathbf{G}\rightarrow \tilde{\mathbf{G}}$ be a regular embedding. Let $\tilde{G} = \tilde{\mathbf{G}}^F$,    and $\tilde{G^*}= (\mathbf{\tilde G}^*)^F$, where $\mathbf{\tilde{G}}^*$ is dual to $\mathbf{\tilde{G}}$.  Then \cite[Remark 1.7.6]{GM20} gives $Z:=\mathbf{Z}(\mathbf{G})^F = \tilde{\mathbf{G}}\cap \mathbf{Z}(\tilde{\mathbf{G}})^F$ (note that \cite[Proposition 3.6.8]{Car85} gives that $\mathbf{Z}(\mathbf{G}^F) = \mathbf{Z}(\mathbf{G})^F$).  Since $\iota$ is a central isotopy, as in \cite[Lemma 1.7.12]{GM20} we obtain a dual  isotypy  $\iota^*: \tilde{\mathbf{G}}^*\rightarrow \mathbf{G}^*$ which is surjective with surjective restriction $\iota^*: \tilde{G}^*\rightarrow G^*$.  First to show the described map is well defined let $s_1,s_2$ be semisimple elements of $G^*$ which lie in the same coset of $\mathbf{O}^{p'}(G^*)$. It must be the case that $  \mathbf{O}^{p'}(G^*)  \subseteq \iota^*(\mathbf{O}^{p'}(\tilde{G}^*))$, so,  we can find semisimple elements  $\tilde s_1,\tilde s_2 \in \tilde{G}^*$ such that $\iota^*(\tilde s_1) = s_1,\iota^*(\tilde s_2) = s_2$ and $\tilde s_1\tilde s_2^{-1} \in \mathbf{O}^{p'}(\tilde{G}^*)$.  Then, using \cite[Prop 2.6.16]{GM20} we see that for every $\tilde \chi \in \mathcal{E}(\tilde{G},\tilde s_i)$ the constituents of some $\tilde \chi|_G $ lie in $\mathcal{E}(G,s_i)$. Let $\chi$ be one such constituent of $
    \tilde\chi_G$.  Then $\chi|_Z$ is a constituent of $\tilde\chi|_Z$. This implies that $\omega_s  =\omega_{\tilde s}|_{Z} $. It follows, since $\omega_{\tilde s_1} = \omega_{\tilde s_2}$, that $\omega_{s_1} = \omega_{s_2}$ as desired.
    
    We then argue as in the connected center case to see that the mapping  is a bijection. So we need only show the homomorphism property. Let $s_1$ and $s_2$ be semisimple elements in $G^*$ and $\hat s_1, \hat s_2$ be as above. Then $\omega_{s_1s_2} = \omega_{\tilde s_1\tilde s_2}|_Z= \omega_{\tilde s_1}|_Z\omega_{\tilde s_2}|_Z = \omega_{s_1}\omega_{s_2}$.
    \end{proof}

    \begin{corollary}\label{center}
         Let $\mathbf{G}$ be a connected reductive algebraic group and $F:\mathbf{G}\rightarrow \mathbf{G}$ a Frobenius map. Further let $(\mathbf{G}^*,F^*)$ be dual to the pair $(\mathbf{G},F)$. Then a character $\chi \in \mathcal{E}(\mathbf{G}^F,s)$ is trivial on $\mathbf{Z}(\mathbf{G}^F)$ if and only if $s \in \mathbf{O}^{p'}((\mathbf{G}^*)^{F^*})$
    \end{corollary}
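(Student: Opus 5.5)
This follows directly from Proposition \ref{omega} combined with Proposition \ref{stthings}. Write $G := \mathbf{G}^F$ and $G^* := (\mathbf{G}^*)^{F^*}$, and let $\chi \in \mathcal{E}(G,s)$.

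By Proposition \ref{omega}, $\operatorname{Res}^{G}_{\mathbf{Z}(G)}\chi = \chi(1)\omega(s)$. Hence $\chi$ is trivial on $\mathbf{Z}(G)$, in the sense that $\mathbf{Z}(G) \subseteq \ker\chi$, if and only if $\omega(s)$ is the principal character $1_{\mathbf{Z}(G)}$.

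Next, the semisimple element $s$ represents the coset $s\mathbf{O}^{p'}(G^*)$, so Proposition \ref{stthings} gives $\Phi(s\mathbf{O}^{p'}(G^*)) = \omega(s)$.

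\begin{itemize}
\item Since $\Phi$ is a group homomorphism, it sends the identity coset $\mathbf{O}^{p'}(G^*)$ to the identity of $\Irr(\mathbf{Z}(G))$, namely the principal character.
\item Since $\Phi$ is a bijection, the identity coset is the only coset mapped to the principal character.
\end{itemize}

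Therefore $\omega(s) = 1_{\mathbf{Z}(G)}$ if and only if $s\mathbf{O}^{p'}(G^*) = \mathbf{O}^{p'}(G^*)$, that is, if and only if $s \in \mathbf{O}^{p'}(G^*)$. The "if" direction can also be read off directly from Proposition \ref{NT13}.

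There is no real obstacle here; all the substantive work is in Proposition \ref{stthings}. The one point to check is that $s$, being semisimple in $G^*$, is a legitimate coset representative in the sense of that proposition, which it is by definition.
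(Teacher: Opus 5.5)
Your proof is correct and follows the paper's route. The paper simply says the corollary follows immediately from Proposition \ref{stthings}. You spell that step out: Proposition \ref{omega} turns triviality on the center into $\omega(s)=1_{\mathbf{Z}(G)}$, and the fact that $\Phi$ is an isomorphism then shows this happens exactly for the identity coset $\mathbf{O}^{p'}(G^*)$.
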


    \begin{proof}
        This follows immediately from Proposition \ref{stthings}.
    \end{proof}

    \begin{section}{ Linear and Unitary Groups}
    
We begin this section with some results on $\GL^\e_n(q)$ which will be useful for computing the fields $\Q(\GL^\e_n(q)).$
    Note that Theorem \ref{SV20} implies that if $\chi \in \mathcal{E}(\GL_n^\e(q),s)$ has the property that $J_s(\chi) = 1$ (recall that such characters are called \textit{semisimple characters}), then  $\sigma \in \G$ fixes $\chi$ if and only if $\sigma(s) $ is $\GL_n^\e(q)$-conjugate to $s$. As this is more convenient than the general case, we prove the following, which reduces the problem of computing the fields $\Q(\GL_n^\e(q))$ to the problem of computing the fields $\Q(\{\chi(g): \chi \in \Irr(\GL^\e_n(q)),\chi \text{ semisimple}, g \in \GL^\e_n(q)\})$. 
    
    \begin{proposition}\label{reduction}
Let $\chi \in \Irr(\GL_n^\e(q))$ and $\sigma \in \G$ such that $\sigma(\chi) \neq \chi$. Then there exists some $\chi_0 \in \Irr(\GL_n^\e(q))$ such that $\sigma(\chi_0)\neq \chi_0$ and $\chi_0 \in \mathcal{E}(\GL^\epsilon_n(q),s_0)$ for some semisimple element $s_0$ with $J_{s_0}(\chi_0) = 1$. 

    \end{proposition}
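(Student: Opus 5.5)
We work with the Jordan decomposition maps of Theorem \ref{SV20}. Write $G=\GL^\e_n(q)$ (so $G^*\cong G$), and let $\chi\in\mathcal{E}(G,s)$ with $J_s(\chi)=\nu$. Let $r=r_\sigma$. Apply Theorem \ref{SV20} with $L$ the fixed field of $\sigma$ inside $\Q(\xi_m)$. Then $\Gal(\Q(\xi_m)/K)=\langle\sigma\rangle$, and the two conditions of the theorem hold for every power of $\sigma$ as soon as they hold for $\sigma$. So $\sigma(\chi)\neq\chi$ means one of the following:
\begin{enumerate}
\item[(a)] $s^r$ is not $G^*$-conjugate to $s$; or
\item[(b)] $s^r=h_0sh_0^{-1}$ for some $h_0\in G^*$, but ${}^{h_0}\nu\neq{}^\sigma\nu$.
\end{enumerate}

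\emph{Case (a).} Let $\chi_0$ be the semisimple character of $\mathcal{E}(G,s)$, i.e.\ $J_s(\chi_0)=1$. Condition (1) of Theorem \ref{SV20} fails for $\chi_0$, so $\sigma(\chi_0)\neq\chi_0$. Take $s_0=s$.

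\emph{Case (b).} I would show that this case still forces a nontrivial action of $r$ on the eigenvalue data of $s$.

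First describe $s$ by its eigenvalues in $\overline{\mathbb{F}}_q^\times$. These are grouped into orbits $\Lambda$ under $x\mapsto x^{\e q}$, each orbit of size $d_\Lambda$ and occurring with multiplicity $m_\Lambda$. Then $\mathbf{C}_{\mathbf{G}^*}(s)^{F^*}\cong\prod_\Lambda\GL^{\e_\Lambda}_{m_\Lambda}(q^{d_\Lambda})$ for suitable signs $\e_\Lambda$, and $\nu=\bigotimes_\Lambda\nu_{\lambda_\Lambda}$ with $\lambda_\Lambda\vdash m_\Lambda$.

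Unipotent characters of general linear and unitary groups are rational-valued, so ${}^\sigma\nu=\nu$. Conjugation by $h_0$ carries the $\Lambda$-eigenspace of $s$ to the $\Lambda^r$-eigenspace of $s^r$. Hence ${}^{h_0}\nu$ is the tensor product with the partitions transported along $\Lambda\mapsto\Lambda^r$, twisted on each factor by an automorphism of $\GL^{\e_\Lambda}_{m_\Lambda}(q^{d_\Lambda})$.

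Unipotent characters of these groups are invariant under all automorphisms, since they are determined by their partition. So if $\Lambda^r=\Lambda$ for every orbit $\Lambda$ occurring in $s$, then ${}^{h_0}\nu=\nu={}^\sigma\nu$, contradicting (b). Therefore some orbit $\Lambda$ occurring in $s$ satisfies $\Lambda^r\neq\Lambda$, and in particular $\Lambda\neq\{1\}$.

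Now build a new semisimple element $s_0\in G^*$ with eigenvalues the orbit $\Lambda$, with multiplicity one, together with the eigenvalue $1$ with multiplicity $n-d_\Lambda\ge 0$. This has the right size because $\Lambda$ already occurs in $s$, so $d_\Lambda\le n$. Such an element exists in $\GL^\e_n(q)$, because $\Lambda$ is an orbit of $x\mapsto x^{\e q}$ and so defines a block of the appropriate form.

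The eigenvalues of $s_0^r$ are $\Lambda^r$ and $1$. This multiset differs from that of $s_0$, so $s_0^r$ is not conjugate to $s_0$. Let $\chi_0$ be the semisimple character in $\mathcal{E}(G,s_0)$. Theorem \ref{SV20}(1) fails for it, so $\sigma(\chi_0)\neq\chi_0$, as required.

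\emph{Main obstacle.} The delicate step is the claim in (b) that ${}^{h_0}\nu=\nu$ whenever $r$ fixes every orbit. This needs a careful identification of conjugation by $h_0$ with the natural permutation of factors of $\mathbf{C}_{\mathbf{G}^*}(s)^{F^*}$, together with the invariance of unipotent characters of $\GL^{\pm}_m(q^d)$ under automorphisms, which includes field and graph automorphisms in the unitary case. Verifying these points is where I expect most of the work.
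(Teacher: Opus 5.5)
Your proposal is correct and follows the paper's proof essentially step for step. Both arguments split according to whether $s^{r}$ is conjugate to $s$. Both then use rationality and automorphism-invariance of the unipotent characters of the factors $\GL^{\e_\Lambda}_{m_\Lambda}(q^{d_\Lambda})$ of $\mathbf{C}_{G^*}(s)$ to force $h_0$ to permute factors nontrivially; the paper phrases this via two distinct irreducible factors $f_1,f_2$ of the characteristic polynomial. Finally, both take the semisimple character of the series of an element whose only non-identity eigenvalues form one moved orbit.

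Your reduction to $\Gal(\Q(\xi_m)/K)=\langle\sigma\rangle$, with the check that the conditions for $\sigma$ imply them for all powers of $\sigma$, makes precise a point the paper glosses over. The one slip is cosmetic: $h_0$ sends the $\Lambda$-eigenspace of $s$ to the $\Lambda$-eigenspace of $s^{r}$, i.e.\ the $\Lambda^{r^{-1}}$-eigenspace of $s$, not to the ``$\Lambda^r$-eigenspace of $s^r$''. This does not affect the conclusion.
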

    \begin{proof}
    Let $\chi \in \Irr(\GL_n^\epsilon(q))$. Further, let $s \in \GL_n^\e(q)$ be semisimple and $\sigma \in \G$ such that $\chi \in \mathcal{E}(\GL_n^\epsilon(q),s)$, $J_s(\chi) = \nu$, and $\sigma(\chi) \neq \chi$. If $\sigma(s)$ is not conjugate to $s$, then it is immediate from Theorem \ref{SV20} that the character $\chi_0 \in \mathcal{E}(\GL_n^\e(q),s)$ such that $J_s(\chi_0) = 1$ is not $\sigma$-fixed, which gives the result in that case.

    Now assume $\sigma(s)$ is conjugate to $s$. Then, by Theorem \ref{SV20}, there exists $h \in \GL^\epsilon_n(q)$ with $hsh^{-1} = \sigma(s) $ and $^{h}\nu \neq \,^\sigma\nu$. Since all unipotent characters of $\GL_n^\epsilon(q)$ are rational valued, the second condition is equivalent to the condition $^{h}\nu \neq \nu$. The centralizer of $s$ in $\GL_n^\e(q)$
 is of the form $\prod_{i=1}^k\GL_{e_i}^{\e_i}(q^{r_i})$ for some $e_i,r_i\in \Z^+$ and $\e_i \in \{\pm 1\}$ (for details see \cite[Proposition 1A]{FS82}). If the conjugation action of $h$  stabilizes each component of this product, then $\nu$ must also be fixed by the action of $h$, since unipotent characters in $\GL_{n_i}^{\e_i}(q^{n_i})$ are invariant under automorphisms and the unipotent characters of this direct product are all tensor products of unipotent character of the individual terms. Therefore, there must  be isomorphic terms in the product which the action of $h$ permutes. Up to a change of basis we may assume $e_1=e_2,$ $r_1=r_2,$ and the conjugation action of $h$ maps the $\GL_{e_1}^{\e_1}(q^{r_1})$ component onto the $\GL_{e_2}^{\e_2}(q^{r_2})$ component of $\mathbf{C}_{\GL^\e_n(q)}(s)$. This gives that the characteristic polynomial of $s$ has distinct irreducible factors $f_1$ and $f_2$ with the same degree and the same multiplicity. The roots of $f_1$ must be of the form $\zeta,\zeta^q,...,\zeta^{q^{e_1-1}}$ and the roots of $f_2$ must be of the form $\zeta',\zeta'^q,...,\zeta'^{q^{e_1-1}}$ for some roots of unity $\zeta$ and $\zeta'$. Since the action of $h$ permutes these components, it must be the case that $\sigma$ maps $\zeta$ to one of the roots of $f_2$. Since $f_1$ and $f_2$ are distinct irreducible polynomials we have that they cannot have roots in common. If we then consider the semisimple element $s_0 \in \GL_n(q)$ with non-identity eigenvalues $\zeta',...,\zeta'^{q^{e_1-1}}$, we see that $\sigma(s_0)$ will not be conjugate to $s_0$. Thus, the semisimple character in $\mathcal{E}(\GL_n(q),s_0)$ is sufficient.\end{proof}

    \begin{remark}\label{tftgt} Let $s \in \GL^{\epsilon}_n(q)$ be semisimple.
        Then by 
        Theorem \ref{SV20}  we see that a semsimple character $\chi \in \mathcal{E}(\GL_n(q),s)$ is fixed by some $\sigma \in \G$ if and only if  $\sigma(s)$  is conjugate to $ s$. It follows that, if the eigenvalues (with multiplicity) of $s$ are $((\zeta_{a_1})^{k_1},...,(\zeta_{a_n})^{k_n})$, then we have that $\Q(\chi)$ corresponds to $\Stab_{\G}(\{\xi_{a_1}^{k_1},...,\xi_{a_n}^{k_n}\})$ under the fundamental theorem of Galois theory, where we take $\G$ to act on multisets of elements of $\Q^{ab}$ element-wise. We take $\G$ to act on such multisets throughout.
    \end{remark}

\begin{lemma}\label{SplitSemisimples}
    Let $s \in \GL^\epsilon_n(q)$ be a semisimple element such that $s$ has eigenvalues (with multiplicity) $(a_1,...a_k,b_1,...,b_m)$ and further let $s_1,s_2$ be semisimple elements which have eigenvalues $(a_1,...,a_n,1,...1)$ and $(b_1,...b_m,1,...,1)$ respectively. Then, if $\chi_0 \in \mathcal{E}(\GL^{\epsilon}_n(q),s),\chi_1 \in \mathcal{E}(\GL^{\epsilon}_n(q),s_1)$ and $\chi_2 \in\mathcal{E}(\GL^{\epsilon}_n(q),s_2)$ are all semisimple characters, we have $\Q(\chi_0)\subseteq \Q(\chi_1,\chi_2) $
\end{lemma}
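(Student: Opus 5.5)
By Remark~\ref{tftgt}, the field of a semisimple character is the fixed field of the stabilizer of the eigenvalue multiset of its semisimple element. So I would prove the Lemma entirely at the level of multisets of roots of unity and use the Galois correspondence. Write $A = \{a_1,\dots,a_k\}$ and $B=\{b_1,\dots,b_m\}$, each viewed as a multiset of roots of unity in $\Q^{ab}$ via the fixed isomorphism $\zeta_n \leftrightarrow \xi_n$. Then:
\begin{itemize}
\item $\Q(\chi_0)$ corresponds to $\Stab_{\G}(A\cup B)$;
\item $\Q(\chi_1)$ corresponds to $\Stab_{\G}(A\cup\{1^{\,n-k}\})$, which equals $\Stab_{\G}(A)$ since $\G$ fixes $1$;
\item $\Q(\chi_2)$ corresponds to $\Stab_{\G}(B)$.
\end{itemize}
The field $\Q(\chi_1,\chi_2)$ is the compositum, so it corresponds to $\Stab_{\G}(A)\cap \Stab_{\G}(B)$. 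The inclusion $\Q(\chi_0)\subseteq \Q(\chi_1,\chi_2)$ is therefore equivalent to
\[
\Stab_{\G}(A)\cap\Stab_{\G}(B)\subseteq \Stab_{\G}(A\cup B),
\]
which is immediate, since $\sigma$ acts elementwise and multiset union is compatible with that action.

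The one point needing care is that $s_1$ and $s_2$ are genuine elements of $\GL^\e_n(q)$. The multiset of eigenvalues of a semisimple element of $\GL^\e_n(q)$ is stable under $x\mapsto x^{\e q}$ (for $\e=1$ the Frobenius; for the unitary case $x\mapsto x^{-q}$). Two things follow. First, since the eigenvalues of $s$ split as $A\cup B$, I would interpret the hypothesis as saying that $A$ and $B$ are each stable under $x \mapsto x^{\e q}$. Second, padding by $1$'s preserves this stability, so $s_1$ and $s_2$ exist; the statement implicitly assumes this, and I would note it. Two semisimple elements of $\GL^\e_n(q)$ are conjugate exactly when their eigenvalue multisets agree, which is what makes Remark~\ref{tftgt} applicable here.

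The only possible obstacle is making the Remark's correspondence precise. The semisimple character of a series is determined by the series, and by Theorem~\ref{SV20} the character is $\sigma$-fixed if and only if $\sigma(s)$ is conjugate to $s$, that is, if and only if $\sigma$ stabilizes the eigenvalue multiset. With that in hand, the inclusion follows from the Galois correspondence applied inside a cyclotomic field containing all the relevant values.
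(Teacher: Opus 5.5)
Your proposal is correct and follows essentially the same route as the paper: both use Remark~\ref{tftgt} to translate $\sigma$-invariance of a semisimple character into $\sigma$-stability of the eigenvalue multiset of its semisimple element. Both then observe that stabilizing $A$ and $B$ separately forces stabilizing $A\cup B$, which gives $\Stab_{\G}(\Q(\chi_1,\chi_2))\le \Stab_{\G}(\Q(\chi_0))$ and the inclusion by the Galois correspondence. Your extra remarks (cancelling the padding $1$'s, and the $x\mapsto x^{\e q}$-stability of $A$ and $B$ implicit in the existence of $s_1,s_2$) only make explicit details the paper leaves tacit.
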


\begin{proof}
     Assume $\sigma$ fixes the elements of $\Q(\chi_1,\chi_2)$. Then, as discussed in Remark \ref{tftgt} above, we see that $\sigma(s_1) $ is conjugate to $ s_1$ and $\sigma(s_2) $ is conjugate to $ s_2$.  It is immediate from that remark that $\sigma(s) $ is conjugate to  $ s$ and $\sigma$ fixes $\Q(\chi_0)$. It follows that $\operatorname{Stab}_{\G}(\Q(\chi_1,\chi_2)) \le \operatorname{Stab}_{\G}(\Q(\chi_0)) $ and the result follows from the fundamental theorem of Galois theory.
\end{proof}

\begin{lemma}\label{divides}
    Let $s_1,s_2 \in G =   \GL^{\epsilon}_n(q)$ be a semisimple elements such that the complete collection of  non-identity eigenvalues, with multiplicity, of $s_1$ is $(\zeta_{q^k-\e^k},\zeta_{q^k-\e^k}^{\e q},...,\zeta_{q^k-\e^k}^{(\e q)^{k-1}})$ for some $k \in \mathbb{Z}^+$ and  the complete collection of  non-identity eigenvalues, with multiplicity, of $s_2$ is $(\zeta_a,\zeta_a^{\e q},...,\zeta_a^{(\e q)^{k-1}})$ for some integer $a \in \Z^+$. If $\chi_1 \in \mathcal{E}(\GL^{\epsilon}_n(q),s_1)$ and $\chi_2 \in\mathcal{E}(\GL_n^\epsilon(q),s_2)$ are semisimple characters, then  $\Q(\chi_2 ) \subseteq \Q(\chi_1) $.
\end{lemma}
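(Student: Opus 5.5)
The plan is to work entirely on the Galois side, using Remark \ref{tftgt}, and then conclude with the fundamental theorem of Galois theory exactly as in the proof of Lemma \ref{SplitSemisimples}. Put $N := q^k-\e^k$. By Remark \ref{tftgt}, $\Q(\chi_1)$ corresponds to $\Stab_{\G}(M_1)$ and $\Q(\chi_2)$ corresponds to $\Stab_{\G}(M_2)$, where
$$M_1 := \{\xi_N,\xi_N^{\e q},\dots,\xi_N^{(\e q)^{k-1}}\},\qquad M_2 := \{\xi_a,\xi_a^{\e q},\dots,\xi_a^{(\e q)^{k-1}}\}$$
are multisets, with the identity eigenvalues omitted since $\G$ fixes $1$. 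It therefore suffices to show that $\Stab_{\G}(M_1)\le \Stab_{\G}(M_2)$.

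\textbf{Step 1: $a \mid N$.} Since $s_2$ is a semisimple element of $\GL^\e_n(q)$, its multiset of eigenvalues is stable under $x\mapsto x^{\e q}$. This holds for $\GL_n(q)$ via the Frobenius, and for $\GU_n(q)$ via $x\mapsto x^{-q}$. Now apply $x\mapsto x^{\e q}$ to the list $\zeta_a,\zeta_a^{\e q},\dots,\zeta_a^{(\e q)^{k-1}}$. The last entry goes to $\zeta_a^{(\e q)^k}$, and the multiset must be preserved. Comparing multiplicities forces $\zeta_a^{(\e q)^k}$ to equal $\zeta_a$. This gives $a \mid (\e q)^k-1 = \pm N$, so $a\mid N$.

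The one delicate point is the multiplicity bookkeeping when the $\e q$-orbit of $\zeta_a$ is shorter than $k$. To handle it, let $d$ be the orbit length. The map $x\mapsto x^{\e q}$ sends the $j$-th entry to the $(j+1)$-st for $j<k-1$. The list $(\zeta_a^{(\e q)^j})_{j<k}$ is then a $d$-periodic sequence truncated at $k$. It is stable as a multiset exactly when the shifted list $(\zeta_a^{(\e q)^{j}})_{1\le j\le k}$ has the same multiplicities. This holds iff $\zeta_a^{(\e q)^k}=\zeta_a$, i.e. iff $d\mid k$. Once $d \mid k$, we have $a \mid q^d-\e^d \mid N$.

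\textbf{Step 2: the stabilizer inclusion.} Let $\sigma\in\Stab_{\G}(M_1)$. Then $\sigma(\xi_N)\in M_1$, so $\sigma(\xi_N)=\xi_N^{(\e q)^j}$ for some $j$. Because $\xi_a=\xi_N^{N/a}$, it follows that $\sigma(\xi_a)=\xi_a^{(\e q)^j}$. Hence $\sigma$ acts on $M_2$ as the map $x\mapsto x^{(\e q)^j}$. This map sends the $i$-th entry to the entry with index $i+j$ taken mod $k$; the reduction mod $k$ is legitimate because $(\e q)^k\equiv 1 \pmod a$. So $\sigma$ permutes the indexed list cyclically and preserves $M_2$ with multiplicities.

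\textbf{Step 3: conclusion.} Steps 1 and 2 give $\Stab_{\G}(M_1)\le\Stab_{\G}(M_2)$. By the Galois correspondence inside $\Q^{ab}$, this yields $\Q(\chi_2)\subseteq\Q(\chi_1)$.

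The main obstacle I expect is Step 1, namely justifying $a\mid N$ from the existence of $s_2$, together with the multiplicity bookkeeping above. Once that divisibility is established, the rest is a direct computation with roots of unity.
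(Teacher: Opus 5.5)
Your proposal is correct and follows essentially the same route as the paper: first deduce $a \mid q^k-\e^k$ from the $\e q$-stability of the eigenvalue multiset of $s_2$, then show that every $\sigma$ stabilizing the eigenvalue multiset of $s_1$ also stabilizes that of $s_2$, and conclude via the Galois correspondence. The differences are only cosmetic. Your multiset cancellation ($x_0=x_k$) replaces the paper's minimal-period argument ($m\mid k$), and you argue directly, whereas the paper argues contrapositively via the exponent $r$ not being a power of $\e q$ modulo $a$.
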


\begin{proof}

First we claim that $a$ must divide $(q^k-\e^k).$ Let $m$ be minimal such that $a|(q^m - \e^m) $, or equivalently, that $(\e q)^m \equiv 1 $(mod $a
)$.  It is well known that in this situation, since $\zeta_a$ is an eigenvalue of $s_2$, that $\zeta_a,...,\zeta_a^{{(\e q)}^{m-1}}$ must all have the same multiplicity as eigenvalues of $s_2$, so we further see that $m$ must divide $k$. This gives that in fact $(\e q)^k\equiv 1 $(mod $a
)$ and that $a|( q^k - \e^k)$  as claimed.

      By Theorem \ref{SV20},  if $\sigma \in \mathcal{G}$, we have that  each $\chi_i$ is $\sigma$-fixed if and only if $\sigma(s_i)$ is conjugate to $ s_i$.  We see all the elements of the sets of non-identity eigenvalues of $s_1$ and $s_2$ lie in  single orbits in $\overline{\mathbb{F}_p}^\times$ under the action of the group generated by the  map given by $x \mapsto x^{\epsilon q}$ for $x \in \overline{\mathbb{F}_p}^\times$. In fact, since in both $\zeta_a^{(\e q)^k} = \zeta_a$ and $\zeta_{q^k-\e^k}^{(\e q)^k} = \zeta_{q^k-\e^k}$ we see that the sets must be the entire orbits. Let $\sigma \in \mathcal{G}$ such that $\sigma(s_2) $ is not conjugate to $s_2$. Then, if $\sigma(\xi_{|G|}) = \xi_{|G|}^r$, we see that $\sigma(\zeta_{a}) = \zeta_{a}^r$ and $r$ is not equivalent to a power of $\epsilon q$ mod $a$, otherwise $s_2$ would be conjugate to $\sigma(s_2)$. We also have that $\sigma(\zeta_{q^k-\e^k})=\zeta_{q^k-\e^k}^r  $. Since $r $ is not equivalent to a power of $\epsilon q$ mod $a$  and $a|(q^k-\e^k),$ it cannot be equivalent to a power of $\epsilon q$ mod $q^k-\e^k$ and it follows that the conjugacy class of $s_1$ is not $\sigma$-fixed. Therefore, $\operatorname{Stab}_{\G}(\Q(\chi_1)) \le \operatorname{Stab}_{\G}(\Q(\chi_2)) $ and the inclusion follows from the fundamental theorem of Galois theory. 
\end{proof}

\begin{lemma}\label{ones}
   Let $G = \GL^{\epsilon}_n(q)$,  $\chi \in \mathcal{E}(G,s)$ be a semisimple character and $\sigma \in \G$ such that $\sigma(\chi) \neq \chi$ and $\sigma \in \operatorname{Stab}_{\G}(\xi_{q-{\epsilon}})$. Then there exists a semisimple element $s' \in G$ such that, if $\chi'$ is the  semisimple character in $ \E(G,s')$, then  $\sigma(\chi') \neq \chi'$ and none of the non-identity eigenvalues of $s'$ lie in $\GL_1^\epsilon(q)$.
\end{lemma}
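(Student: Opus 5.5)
Throughout, we use Remark \ref{tftgt}: a semisimple character in $\E(G,s)$ is $\sigma$-fixed if and only if $\sigma$ stabilizes the multiset of eigenvalues of $s$, viewed as roots of unity via $\zeta_m\mapsto\xi_m$. The plan is to delete from $s$ all eigenvalues lying in $\GL_1^\epsilon(q)$ and show that the remaining eigenvalues still witness the failure of $\sigma$ to fix $\chi$.

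First, identify the eigenvalues in $\GL_1^\epsilon(q)$. These are exactly the $\zeta_{q-\e}^{j}$, i.e.\ the $(q-\e)$-th roots of unity. Since $\sigma\in\Stab_\G(\xi_{q-\e})$, $\sigma$ fixes each such root of unity individually.

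Next, write the eigenvalue multiset of $s$ as $A\sqcup B$. Here $A$ consists of the eigenvalues that are $(q-\e)$-th roots of unity, including $1$, and $B$ consists of the rest. The set of $(q-\e)$-th roots of unity is $\G$-stable and fixed pointwise by $\sigma$. Hence $\sigma(A\sqcup B)=A\sqcup\sigma(B)$ with $\sigma(B)$ disjoint from the $(q-\e)$-th roots of unity. Since $\sigma(\chi)\neq\chi$, the multiset $A\sqcup B$ is not $\sigma$-stable, so $\sigma(B)\neq B$ as multisets.

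Then construct $s'$. Let $s'$ be the semisimple element of $G$ whose eigenvalues are those of $B$ together with $|A|$ copies of $1$. Its existence in $\GL_n^\e(q)$ follows because $B$ is a union of full orbits under $x\mapsto x^{\e q}$, with constant multiplicity along each orbit: this property holds for the eigenvalues of $s$, and the set of $(q-\e)$-th roots of unity is closed under $x\mapsto x^{\e q}$. Such a multiset, padded with $1$'s, is the eigenvalue multiset of a semisimple element of $G$ (for the unitary case one takes the corresponding block-diagonal element in $\GL^\e$ built from orbits, as in \cite[Proposition 1A]{FS82}). The non-identity eigenvalues of $s'$ are exactly $B$, none of which lies in $\GL_1^\e(q)$. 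Finally, $\sigma$ does not stabilize $B\sqcup\{1^{|A|}\}$: $\sigma$ fixes the $1$'s, and $\sigma(B)\neq B$ with $\sigma(B)$ containing no $1$'s. By Remark \ref{tftgt}, the semisimple character $\chi'\in\E(G,s')$ therefore satisfies $\sigma(\chi')\neq\chi'$.

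The main obstacle is the existence of $s'$ in the unitary case: we must check that replacing eigenvalues by $1$ keeps a multiset realizable as an element of $\GU_n(q)$. We expect this to reduce to the orbit-closure property, since the eigenvalue multisets of semisimple elements of $\GU_n(q)$ are precisely those closed under $x\mapsto x^{-q}$.
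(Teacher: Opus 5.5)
Your proposal is correct and is essentially the paper's proof. Like the paper, you split the eigenvalue multiset of $s$ into the part lying in $\GL_1^\epsilon(q)$, which $\sigma$ fixes pointwise, and the remainder, which then cannot be $\sigma$-stable, and you obtain $s'$ by replacing the former part with $1$'s; your extra check that $s'$ exists in $\GL_n^\epsilon(q)$ (the $(q-\epsilon)$-th roots of unity are fixed by $x\mapsto x^{\epsilon q}$, so the remaining eigenvalues form full orbits of constant multiplicity) fills in a point the paper simply asserts.
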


\begin{proof}
    There exists $r\in \mathbb{N}$ such that $\sigma(\zeta_{|G|}) = \zeta_{|G|} ^r$. Since $\sigma \in\operatorname{Stab}_{\G}(\zeta_{q-\epsilon})$ we have $r \equiv 1 \text{ mod } (q-\epsilon)$. Let $\{a_1,..,a_k,b_1,...b_m\}$ be the multiset of eigenvalues of $s$, where each $b_i \in \GL_1^\epsilon(q)$ and none of the $a_i$ are. Then the multiset of  eigenvalues of $\sigma(s)$ is $\{a_1^r,..,a_k^r,b_1^r,...,b_m^r\} = \{a_1^r,..,a_k^r,b_1,...b_m\}$. This implies that $\{a_1,...,a_k\} \neq\{a_1^r,..,a_k^r\} $.  Let $ s' $ be a semisimple element with the multiset of eigenvalues  $\{a_1,..,a_k,1,...,1\}$. Then $\sigma(s')$ has multiset of eigenvalues  $\{a_1^r,..,a_k^r,1,...,1\} $ which is not equal to the multiset $ \{a_1,..,a_k,1,...,1\}$. This implies that $s'$ has the desired properties. \end{proof}

\begin{proposition}\label{TypeAinclusions}
    Let $n$ be and integer with $n \ge 2$ and let $q$ be a power of a prime. Then the following hold:

    \begin{enumerate}
        \item[(i)]  $\Q(\GL^{\epsilon}_{n-1}(q))\subseteq\Q(\Gl^{\epsilon}_{n}(q)) $.
        \item[(ii)] $\Q(\GL^{\epsilon}_{n}(q))\subseteq\Q(\SL^{\epsilon}_{n+1}(q)) $.
    \end{enumerate}

\end{proposition}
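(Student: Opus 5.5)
Both parts will be proved through Galois stabilizers, using the fundamental theorem of Galois theory inside $\Q^{ab}$. By Proposition \ref{reduction}, an element $\sigma\in\G$ fixes $\Q(\GL^\e_m(q))$ pointwise if and only if it fixes every semisimple character of $\GL^\e_m(q)$. By Remark \ref{tftgt}, this holds if and only if $\sigma$ stabilizes the multiset of eigenvalues of every semisimple $s\in\GL^\e_m(q)$. Since semisimple classes of $\GL^\e_m(q)$ are determined by their eigenvalue multisets, this is the same as $\sigma(s)$ being conjugate to $s$ for all semisimple $s$. So $\operatorname{Stab}_\G(\Q(\GL^\e_m(q)))$ is the group $H_m$ of such $\sigma$. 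Each inclusion then reduces to a reverse inclusion of stabilizers.

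For (i), take $\sigma\in H_n$ and a semisimple $s\in\GL^\e_{n-1}(q)$ with eigenvalue multiset $M$. Embed $s$ block-diagonally as $s'=\mathrm{diag}(s,1)\in\GL^\e_n(q)$, which is semisimple with eigenvalue multiset $M\cup\{1\}$. Since $\sigma(s')\sim s'$, we get $\sigma(M)\cup\{1\}=M\cup\{1\}$. Cancelling one copy of $1$ gives $\sigma(M)=M$, so $\sigma(s)\sim s$. Hence $H_n\le H_{n-1}$, and (i) follows.

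For (ii), take $\sigma\in\operatorname{Stab}_\G(\Q(\SL^\e_{n+1}(q)))$ and a semisimple $s\in\GL^\e_n(q)$ with multiset $M$. Put $\tilde s=\mathrm{diag}(s,1)\in\GL^\e_{n+1}(q)$ and let $\tilde\chi$ be the semisimple character in $\E(\GL^\e_{n+1}(q),\tilde s)$. Every constituent $\psi$ of $\tilde\chi|_{\SL^\e_{n+1}(q)}$ has $\Q(\psi)\subseteq\Q(\SL^\e_{n+1}(q))$, so $\sigma$ fixes each such $\psi$. By Clifford theory, $\sigma(\tilde\chi)$ and $\tilde\chi$ then share the constituent $\psi$, so $\sigma(\tilde\chi)=\tilde\chi\lambda$ for a linear character $\lambda$ of $\GL^\e_{n+1}(q)$ trivial on $\SL^\e_{n+1}(q)$. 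Dually, tensoring with $\lambda$ multiplies $\tilde s$ by a central element $z$, so $\sigma(M)\cup\{1\}=zM\cup\{z\}$ as multisets. If $z=1$, we conclude exactly as in (i) that $\sigma(s)\sim s$.

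The main obstacle is ruling out $z\neq1$. I would first try to avoid the issue by choosing the test element so that no nontrivial central twist of its class is Galois-conjugate to it. One option is to replace $\tilde s$ by $\mathrm{diag}(s,\det(s)^{-1})$, or by $\mathrm{diag}(s,c)$ for a suitable scalar $c\in\GL^\e_1(q)$. Another is to use Lemma \ref{SplitSemisimples} and Lemma \ref{divides} to reduce to $s$ whose non-identity eigenvalues form a single $x\mapsto x^{\e q}$ orbit. For such $s$, the equation $\sigma(M)\cup\{1\}=zM\cup\{z\}$ forces $z^{-1}\in M\cup\{1\}$. Comparing orbit structures (orbit lengths and multiplicities) should then force either $z=1$ or a configuration whose field is already seen in $\Q(\GL^\e_1(q))$. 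The remaining case is $\sigma$ moving $\xi_{q-\e}$. There I would check directly that $\Q(\xi_{q-\e})$ (or the relevant subfield) lies in $\Q(\SL^\e_{n+1}(q))$, so that Lemma \ref{ones} lets us assume $\sigma$ fixes $\xi_{q-\e}$ and discard eigenvalues in $\GL^\e_1(q)$. With that assumption, a nontrivial twist $z\in\GL^\e_1(q)$ would have to permute the remaining orbits compatibly with $\sigma$, and a counting argument on multiplicities should exclude this.
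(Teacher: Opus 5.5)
Your part (i) is correct and is the paper's argument: pad $s$ to $\mathrm{diag}(s,1)$ and cancel the extra eigenvalue $1$. Part (ii), however, is not yet a proof. The whole difficulty is ruling out a nontrivial central twist $z$ in $\sigma(\tilde s)\sim z\tilde s$. For that you only list candidate strategies (changing the padding entry, reducing to single orbits, an unspecified ``counting argument'') and carry none of them out.

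Two concrete ingredients are missing. First, your reduction to $\sigma$ fixing $\xi_{q-\e}$ needs an actual proof that $\Q(\xi_{q-\e})\subseteq\Q(\SL^\e_{n+1}(q))$. This is exactly where the hypothesis $n\ge 2$ enters, and your sketch never uses it. The inclusion is false for $n=1$: $\Q(\SL_2(5))=\Q(\sqrt5)$ does not contain $\xi_4$, while $\Q(\GL_1(5))=\Q(i)$. The paper handles this step with $s=\mathrm{diag}(\zeta_{q-\e},1,\dots,1)\in\GL^\e_{n+1}(q)$. Suppose $\sigma(s)\sim zs$. In $\sigma(s)$ the eigenvalue $1$ has multiplicity $n\ge 2$, while $zs$ has $z$ with multiplicity $n$ and $z\zeta_{q-\e}$ once. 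Hence $z=1$, and then $\sigma(\zeta_{q-\e})=\zeta_{q-\e}$. So any $\sigma$ moving $\xi_{q-\e}$ moves the whole series $\E(\SL^\e_{n+1}(q),\pi(s))$.

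Second, once $\sigma$ fixes $\xi_{q-\e}$, no orbit-permutation analysis is needed. Lemma \ref{ones} gives $s$ with $\sigma(s)\not\sim s$ and no non-identity eigenvalue in $\GL^\e_1(q)$. Now $\sigma$ acts on eigenvalues by $x\mapsto x^r$ with $r$ prime to the group order. It therefore preserves the order of each eigenvalue, and hence preserves membership in the subgroup of order $q-\e$. For $\hat s=\mathrm{diag}(s,1)$, the eigenvalues of $\sigma(\hat s)$ lying in $\GL^\e_1(q)$ are all equal to $1$, with some multiplicity $m\ge 1$. The eigenvalues of $z\hat s$ lying there are all equal to $z$, with the same multiplicity $m$, since multiplying by $z\in\GL^\e_1(q)$ keeps the other eigenvalues outside that subgroup. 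So $\sigma(\hat s)\sim z\hat s$ forces $z=1$, and you conclude as in (i). With these two steps supplied, your outline becomes precisely the paper's proof of (ii).
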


\begin{proof}
 Let $\tilde{G}_0 = \GL^{\epsilon}_{n-1}(q), \tilde{G} = \GL^{\epsilon}_{n}(q)$ and $G = \SL^{\epsilon}_{n+1}(q)$. Let $\sigma \in\G$ such that $\sigma$ does not fix $\Q(\tilde{G_0})$ element-wise. To show $(i)$
 it is sufficient to find a character in $\Irr(\tilde{G})$ which is not $\sigma$-fixed.  By Proposition \ref{reduction} we can find some semisimple character $\chi_0 \in \Irr(\tilde{G}_0)$ such that $\sigma(\chi_0)\neq \chi_0$. Then, Remark \ref{tftgt} implies that if $\chi_0 \in \mathcal{E}(\tilde G_0,s)$, then $\sigma(s)$ is not conjugate to $s$. We then define $\hat s = \mathrm{diag}(s,1 )\in \tilde{G}$ and  consider  $\theta \in \mathcal{E}(\tilde{G},\hat s)$.  We see that  $\sigma(\theta) \in \mathcal{E}(\tilde{G},\sigma(\hat s))$ and it is immediate that  $\sigma(\hat s) $ is not conjugate to $\hat s$ and that $\theta$ is not $\sigma$-fixed. Thus, $\operatorname{Stab}_{\G}(\Q(\tilde{G}_0)) \ge \operatorname{Stab}_{\G}(\Q(\tilde{G})) $ and the fundamental theorem of Galois theory gives the result.

 To show $(ii)$ first assume that $\sigma (\xi_{q-\epsilon})= \xi_{q-\e}$ and $\sigma$ does not fix $\Q(\tilde{G})$ element-wise. Then from Lemma \ref{ones} we can find a semisimple character $\chi \in \Irr(\tilde G)$ such that $\sigma(\chi) \neq \chi$ and  $\chi \in \mathcal{E}(\tilde{G},s)$ such that 
 none of the non-identity eigenvalues of $ s$ lie $\GL_1^{\epsilon}(q)$. Let $\hat s = \mathrm{diag}(s,1)$. Then,  as in the previous case, we see that $\hat{s}$ is not conjugate to $\sigma(\hat{s})$. 
 Further, since none of the non-identity eigenvalues of $\hat s$ lie in $\GL_1^\e$
 and $1$ is an eigenvalues of $\hat s$, we see that, for any non-trivial $z \in \mathbf{Z}(\tilde{G})$, $z\hat s$   will have a non-identity eigenvalue which lies in $\GL_1^\e$ and therefore cannot be conjugate to $\sigma(\hat s)$. Thus, if we let $\pi:\GL_{n+1}^\e(q) \rightarrow \PGL_{n+1}^\e(q)$ be the canonical projection map (recall that $\PGL_n^\e(q)$ is dual to $\SL^\e_n(q)$), we see that any character in the series $\mathcal{E}(G,\pi(\hat{s}))$ cannot be fixed by $\sigma$ since $\sigma(\pi(\hat s)) $ is not conjugate to $\pi(\hat s)$.

 Now consider any $\sigma \in \G$ such that  $\sigma (\xi_{q-\epsilon})\neq\xi_{q-\e}$. Consider the semisimple element $s \in \GL_{n+1}^\e(q)$ whose only non-identity eigenvalue is $\zeta_{q-\e}$ with multiplicity $1$. Then for any character $\chi \in \mathcal{E}( \GL_{n+1}^\e(q),s)$ we have $\sigma(\chi) \neq \chi$. We see that the only eigenvalues of $\sigma( s)$ are $\zeta_{q-\epsilon}^r\neq \zeta_{q-\epsilon}$ and $1$,  for some $r \in (\Z/(q-\e)\Z)^\times$. Further, since $n +1 \ge 3$ we see that the eigenvalue of $1$ has multiplicity at least $2$. It follows that $z\hat s$ and $\sigma(\hat s)$ cannot be conjugate for any $z \in \mathbf{Z}(\tilde{G})$. So, if we take $\pi$ as in the previous case we see that any character in $\mathcal{E}(G,\pi(s))$ cannot be fixed by $\sigma$. Thus, we see similarly as above that $\operatorname{Stab}_{\G}(\Q(\tilde{G}_0)) \ge \operatorname{Stab}_{\G}(\Q(G)) $ and the fundamental theorem of Galois theory gives the result.
\end{proof}

\subsection{Some Number-Theoretic Lemmas}

\begin{lemma}\label{NumberTheoryLemma}
       Let $n \ge 3$ be an integer. Then $|\Q(\xi_n):\Q(\xi_n+\xi_n^{-1})| = 2$. Further, if $q$ is a power of a prime and $\e \in \{\pm1\}$, we have  $|\Q(\xi_{q^2-1}):\Q(\xi_{q^2-1} + \xi_{q^2-1}^{\e q})| = 2$ unless $q = 3$ and $\e = -1$
    \end{lemma}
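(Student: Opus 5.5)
The plan is to use Galois theory inside the cyclotomic field and identify the subgroup of $(\Z/m\Z)^\times$ fixing the element in question.

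\emph{First claim.} Since $\xi_n^{-1}$ is the complex conjugate of $\xi_n$, the field $\Q(\xi_n+\xi_n^{-1})$ is real. For $n\ge 3$ the number $\xi_n$ is not real, so $\Q(\xi_n)\ne \Q(\xi_n+\xi_n^{-1})$. On the other hand, $\xi_n$ is a root of $x^2-(\xi_n+\xi_n^{-1})x+1$, so the degree is at most $2$, hence exactly $2$.

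\emph{Second claim.} Set $m=q^2-1$, $\alpha=\xi_m+\xi_m^{\e q}$ and $r_0=\e q$. Since $r_0^2=q^2\equiv 1 \pmod m$, the map $\tau:\xi_m\mapsto \xi_m^{r_0}$ is an automorphism of order at most $2$ fixing $\alpha$. Also $\xi_m$ is a root of $x^2-\alpha x+\xi_m^{1+r_0}$, so I need to check two things.
\begin{enumerate}
\item $\xi_m^{1+r_0}\in\Q(\alpha)$.
\item $\xi_m\notin\Q(\alpha)$.
\end{enumerate}
The cleaner route is to compute the stabilizer of $\alpha$ directly. An element $\sigma_r:\xi_m\mapsto\xi_m^r$ fixes $\alpha$ exactly when $\xi_m^r+\xi_m^{rr_0}=\xi_m+\xi_m^{r_0}$. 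By the standard fact that a sum of two roots of unity determines the unordered pair unless the sum is $0$, this forces $\{r,rr_0\}\equiv\{1,r_0\}\pmod m$. The sum is zero only if $\xi_m^{r_0-1}=-1$, that is, $r_0-1\equiv m/2 \pmod m$.

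In the generic case, the stabilizer is therefore $\{1,r_0\}$, which has order $2$ provided $r_0\not\equiv 1 \pmod m$. Since $r_0-1=\e q-1$ satisfies $0<|\e q-1|<q^2-1$ for $q\ge 2$, this holds. The fundamental theorem of Galois theory then gives $|\Q(\xi_m):\Q(\alpha)|=2$.

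\emph{Main obstacle: the degenerate case $\alpha=0$.} This happens when $\e q-1\equiv (q^2-1)/2 \pmod{q^2-1}$.
\begin{itemize}
\item For $\e=1$ the condition reads $q-1=(q^2-1)/2$, i.e. $q=1$, which is impossible.
\item For $\e=-1$ it requires $-(q+1)\equiv (q^2-1)/2$, i.e. $(q+1)(q+1)/2\equiv 0 \pmod{q^2-1}$, i.e. $(q-1)\mid (q+1)/2$.
\end{itemize}
I expect $q=3$ to be the only solution: there $m=8$ and $\xi_8+\xi_8^{-3}=\xi_8-\xi_8=0$. The value $q=2$ also needs checking. It gives $m=3$ and $\alpha=\xi_3+\xi_3^{-2}=2\xi_3\neq 0$, so there $\Q(\alpha)=\Q(\xi_3)$, which would contradict the claim. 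I would double-check this small case and, if needed, verify that $r_0\equiv 1 \pmod m$ occurs precisely there: for $q=2$, $\e=-1$ we get $r_0=-2\equiv 1 \pmod 3$. The other small cases should be handled by direct computation.

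The pair-uniqueness fact for sums of two roots of unity is elementary: if $u+v=u'+v'\ne0$ with all four of modulus $1$, then $uv=u'v'$ follows from comparing the conjugate sums. I would cite or prove this in one line.
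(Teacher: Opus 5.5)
Your method is sound, and for the second claim it is the paper's method. The paper also determines the stabilizer of $\alpha=\xi_{q^2-1}+\xi_{q^2-1}^{\e q}$, citing the absence of minimal vanishing sums of four roots of unity. Your conjugation trick proves exactly that fact by hand: $\bar u+\bar v=(u+v)/(uv)$, so equal nonzero sums force equal products and hence equal unordered pairs. For the first claim your route is genuinely shorter. The paper reruns the vanishing-sum argument and must treat $n=4$ (where $\xi_4=-\xi_4^{-1}$) separately. Your observation that $\Q(\xi_n+\xi_n^{-1})\subseteq\mathbb{R}$, $\xi_n\notin\mathbb{R}$, and $\xi_n$ is a root of a quadratic over it handles every $n\ge 3$ at once.

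Your worry about $q=2$ is justified, and it is a defect in the statement, not in your argument. For $(q,\e)=(2,-1)$ we have $\e q=-2\equiv 1 \pmod 3$, so $\alpha=2\xi_3$ and $|\Q(\xi_3):\Q(\alpha)|=1$. The paper's proof misses this because it reduces to showing that the only non-trivial automorphism fixing $\alpha$ is $\xi\mapsto\xi^{\e q}$, silently assuming that map is non-trivial.

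What is true, and what Lemma \ref{numbertheorylemmabig} and Theorem \ref{main1} actually use, is the following: whenever $(q,\e)\neq(3,-1)$, the stabilizer of $\alpha$ in $(\Z/(q^2-1)\Z)^\times$ is $\langle \e q\rangle$. This still holds at $(2,-1)$, where $\langle -2\rangle$ is trivial. The index equals $2$ only for $(q,\e)\notin\{(3,-1),(2,-1)\}$.

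Two fixes would tidy your write-up:
\begin{itemize}
\item Delete the inequality $0<|\e q-1|<q^2-1$, which fails precisely at $(2,-1)$. Replace it with the exact check. For $\e=1$, $\e q\equiv 1\pmod{q^2-1}$ is impossible since $0<q-1<q^2-1$. For $\e=-1$ it means $q-1\mid 1$, i.e.\ $q=2$.
\item Close the degenerate case rather than ``expecting'' it. $\alpha=0$ requires $-1$ to be a $(q^2-1)$-th root of unity, so $q$ is odd. Then $(q-1)\mid (q+1)/2$ forces $q-1\le (q+1)/2$, i.e.\ $q\le 3$, so $q=3$.
\end{itemize}
With these two computations, no further small cases need to be checked directly.
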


    \begin{proof}
        For the first equality it is sufficient to show that the only  non-trivial Galois automorphism in $\Gal(\Q(\xi_n)/\Q)$ which fixes $\xi_{n} + \xi_{n}^{-1} $ and not $\xi_{n}$ is given by $\xi_{n} \mapsto \xi_{n}^{-1}$. Assume $\sigma$  fixes $\xi_{n} + \xi_{n}^{-1} $. Then $\xi_{n} + \xi_{n}^{-1}   -\sigma(\xi_n) - \sigma(\xi_{n}^{-1}) = 0$. It is know that there a no minimal vanishing sums of roots of unity with exactly $4$ terms and  the only minimal vanishing sums of roots of unity of size $2$ are of the form  $\xi-\xi$ for $\xi$ a root of unity. Then, either $\sigma(\xi_{n}) \in \{\xi_{n},\xi_{n}^{-1}\} $ or $\xi_{n} = - \xi_{n}^{-1}$.  The second possibility can only happen when $n$ is even and $n/2+1 = n-1$. Therefore, this situation only happens when $n = 4$, in which case $\xi_{n} + \xi_{n}^{-1}= 0 $ and  $|\Q(\xi_{n}):\Q(\xi_{n} + \xi_{n}^{-1} )| = |\Q(\xi_{n}):\Q|= 2$, so the claim still holds. 

         For the second claim, it is sufficient to show that the only non-trivial Galois automorphism in $\Gal(\Q(\xi_{q^2-1})/\Q)$ which fixes $\xi_{q^2-1} + \xi_{q^2-1}^{\e q}$ is the Galois automorphism given by $\xi_{q^2-1} \mapsto \xi_{q^2-1}^{\e q}$. Let $\sigma$ be a Galois automorphism which fixes $\xi_{q^2-1} + \xi_{q^2-1}^{\e q}$. Then, arguing exactly as in the first paragraph of this proof, either $\sigma(\xi_{q^2-1}) \in \{\xi_{q^2-1},\xi_{q^2-1}^{\e q}\}$ or $\xi_{q^2-1} = - \xi_{q^2-1}^{\e q}$. If we are in the second case and we have that $\e = 1$, we get that $q = \frac{q^2-1}{2} + 1$, which has no solutions where $q$ is a power of a prime. If instead $\e = -1$ we get that  $q^2 -1 - q = \frac{q^2-1}{2} + 1$. The only solution of this equation where $q$ is a power of a prime is when $q =3$, which we assumed was not the case. 
    \end{proof}

    \begin{lemma}\label{numbertheorylemmabig}
    
        Let $n\ge 2$ be an integer, let $\e \in \{\pm1\}$ and let  $q$ be a power of a prime such that $(n,q,\e) \notin \{(3,2,-1),(2,3,-1)\}$. Let $r \in (\mathbb{Z}/(q^n-\e^n)\Z)^\times$ and $\sigma \in \G$ such that $\sigma(\xi_{q^n-\e^n}) = \xi_{q^n-\e^n}^r$. Then $\sigma$ fixes $\xi_{q^n-\e^n}+ \xi_{q^n-\e^n}^{\e q }+ ... +\xi_{q^n-\e^n}^{(\e q)^{n-1}}$ if and only if $r \in \langle \e q\rangle$ when viewed as an element of $  (\mathbb{Z}/(q^n-\e^n)\Z)^\times$.
    \end{lemma}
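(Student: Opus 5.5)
Write $N := q^n-\e^n$, $\xi := \xi_N$ and $H := \langle \e q\rangle \le (\Z/N\Z)^\times$. Since $(\e q)^n = \e^n q^n \equiv \e^{2n} = 1 \pmod N$, the set $H$ consists of the residues $(\e q)^k$ with $0\le k\le n-1$. These $n$ residues are distinct: for $0\le i<j\le n-1$ we have $|(\e q)^j-(\e q)^i| \le q^{n-1}+q^{n-2} < N$, with only small degenerate cases to check by hand. So the element in the statement is the Gauss period $S := \sum_{h\in H}\xi^h$. The implication "$r\in H \Rightarrow \sigma(S)=S$" is immediate, because multiplication by $r$ permutes $H$. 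All the work is in the converse.

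For the converse, suppose $\sigma(S)=S$ with $r\notin H$. Then $rH\cap H=\emptyset$, and we get a vanishing sum
$$\sum_{h\in H}\xi^{rh}-\sum_{h\in H}\xi^{h}=0$$
of $2n$ distinct $N$-th roots of unity with coefficients $\pm1$. I would define $c:\Z/N\Z\to\Z$ by $c=\mathbf 1_{rH}-\mathbf 1_H$ and apply the Rédei--de Bruijn--Schoenberg description of vanishing sums, in the form used by Lam and Leung. This says that $c$ is an integer combination of indicator functions of cosets of the subgroups $(N/\ell)\Z/N\Z$ of prime order $\ell\mid N$.

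The aim is to derive a contradiction from the very rigid shape of $H$: its elements are $\pm q^k$ with $q^k<N$, and they lie in "lacunary" positions. The key step I would try first is to extract from the coset decomposition a pair of elements of $H\cup rH$ that are congruent modulo $N/\ell$ for some prime $\ell\mid N$. Since $c$ takes only the values $0,\pm1$ and is nonzero on exactly $2n$ points, one can peel off a minimal sub-relation, as in Lam--Leung, whose support contains at least two points of one coset of order $\ell$. I would then show that two distinct elements of $H$ cannot be congruent modulo $N/\ell$ for $\ell \ge 2$, except in tiny cases. The reason is that their difference $\pm q^i\pm q^j$ is too small relative to $N/\ell$, and it is also coprime to the relevant part of $N$, because $\gcd(q,N)=1$ and $q^{j-i}\not\equiv \pm1$ modulo a large divisor of $N$. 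Using multiplicativity, that is, multiplying through by $r^{-1}$ or by elements of $H$ (which preserves the shape of the relation), I would reduce the mixed case of a point of $H$ and a point of $rH$ sharing a coset to the same kind of contradiction. The base case $n=2$ is handled in the same spirit as Lemma \ref{NumberTheoryLemma}, which already treats a two-term period. There the exception $q=3,\e=-1$ appears precisely because $\xi_{8}^{-3}=-\xi_8$.

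I expect the main obstacle to be the mixed case, where the minimal vanishing sub-relation involves several primes $\ell$ simultaneously and includes points of both $H$ and $rH$. Here I would use the Lam--Leung bound on the support of such relations, namely that a minimal relation involving primes $\ell_1<\ell_2<\cdots$ has support at least $(\ell_1-1)+(\ell_2-1)+\cdots$, together with the fact that a relation with coefficients $\pm1$ supported inside a single coset of order $\ell$ must fill the entire coset with one sign. That forces $\ell$ consecutive translates $h, h+N/\ell,\dots$ inside $H$ or inside $rH$, which is impossible by the size argument once $N$ is not tiny. The remaining small triples, in particular $(n,q,\e)=(3,2,-1)$ where $N=9$ and $H=\{1,7,4\}$ is a full coset of order $3$ (so $S=0$), are exactly the excluded ones. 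Any other borderline cases with small $N$ I would verify by direct computation.
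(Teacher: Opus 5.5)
Your setup is correct: $|H|=n$, $rH\cap H=\emptyset$, and you get a $\pm1$ vanishing relation on $2n$ distinct $N$-th roots of unity. But the mechanism you propose for refuting that relation does not work, so the converse direction is not proved.

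First, the claim that two distinct elements of $H$ are never congruent modulo $N/\ell$ is false once $\ell$ is large. The congruence $(\e q)^i\equiv(\e q)^j\pmod{N/\ell}$ just means $N/\ell\mid(\e q)^{j-i}-1$, and the smallness of $q^i\pm q^j$ is no obstruction when $N/\ell$ is small. For $(n,q,\e)=(4,2,1)$ one has $N=15$, $H=\{1,2,4,8\}$ and $1\equiv 4\pmod{15/5}$. The same happens whenever $\e=1$, $n$ is even and $q^{n/2}+1$ is prime. For $(5,2,1)$ the whole of $\Z/31\Z$ is one coset of order $\ell=31$.

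Second, and more seriously, the mixed case cannot be reduced to the unmixed one by multiplying through. The congruence $h\equiv rh'\pmod{N/\ell}$ only says that $r$ agrees with an element of $H$ modulo $N/\ell$, which is compatible with $r\notin H$. Multiplying by $r^{-1}$ or by elements of $H$ merely permutes or negates the relation. That situation is the real content of the lemma. Neither the support bound nor the ``full coset with one sign'' observation settles it, because the primes involved need not be large compared with $2n$. A prime $\ell$ modulo which $\e q$ has order $n$ can be as small as $n+1$, and small primes such as $2,3$ may divide $N$.

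The missing ingredient, which the paper uses, is a primitive prime divisor.
\begin{enumerate}
\item The case $(3,2,-1)$ is excluded by hypothesis, $n=2$ is handled as in Lemma \ref{NumberTheoryLemma}, and $(6,2,1)$ is a direct computation. In all other cases Zsigmondy gives a prime $\ell$ with $\ell\mid(\e q)^n-1$ and $\ell\nmid(\e q)^k-1$ for $0<k<n$.
\item Decompose the relation into minimal vanishing subsums. A two-term one cannot mix $H$ and $rH$, so if each contained at most one element of $H$ there would be at least $3n>2n$ terms. Hence some minimal subsum contains $\xi^{(\e q)^k}$ and $\xi^{(\e q)^{k+m}}$ with $0<m<n$.
\item By the Lam--Leung result, after dividing by one term all terms of a minimal vanishing sum have squarefree order. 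So $N/\gcd(N,(\e q)^m-1)$ is squarefree. Since $\ell\nmid(\e q)^m-1$, this forces $\ell^2\nmid N$.
\item Factor each term, via CRT, as an $(N/\ell)$-th root of unity times a primitive $\ell$-th root of unity. The primitive $\ell$-th roots are linearly independent over $\Q(\xi_{N/\ell})$.
\item Since $\e q$ has order exactly $n$ modulo $\ell$, the $\ell$-components of the elements of $H$ are pairwise distinct, and likewise for $rH$. So each coefficient of $\xi_\ell^j$ has at most one term from $H$ and at most one from $rH$. It cannot vanish, because $H\cap rH=\emptyset$.
\end{enumerate}
This handles your problematic mixed case ($r\bmod\ell\in\langle\e q\rangle$) and the case $r\bmod\ell\notin\langle\e q\rangle$ at the same time.
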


\begin{proof}
    We write $a:= q^n - (\e)^n$ throughout. First assume that $n\neq 2$ and $(n,q,\e) \neq (6,2,1)$. Let $A := \xi_{a}+...+\xi_{a}^{(\e q)^{n-1}}$. Throughout, if $r \in (\Z/a\Z)^\times$, we let $\sigma_r$ denote the element of  $\Gal(\Q(\xi_{a})/\Q)$ such that $\sigma(\xi_{a}) = \xi_a^r$.   It is clear that if $r \equiv (\e q)^k \text{( mod } a)$ for some $k$, then $\sigma_r$ fixes $A$.  It is then sufficient to show that if $r$ is not equivalent to a power of $\e q$ mod $a$, then $\sigma_r(A) \neq A$. Assume not, then  there exists some $r \in (\mathbb{Z}/a\Z)^\times$ such that $r$ is not equivalent to a power of $\e q$ mod $a$ and 
 $$
 A- \sigma_r(A) = \xi_{a}+...+\xi_{a}^{(\e q)^{n-1}} - \xi_{a}^r-...-\xi_{a}^{r(\e q)^{n-1}}  = 0 
 $$
 
 Then we can decompose this sum into a sum of minimal vanishing sums of roots of unity (i.e. non-trivial subsums that are equal to zero and that themselves have no non-trivial subsums equal to zero). The only vanishing sums of root of unity with 2 terms are obviously of the form $\xi + (- \xi)$ for some root of unity $\xi$, and since we assumed $r$ is not equivalent to a power of $\e q$ mod $a$ we see that these subsums only occur if $A = -A$ and $A = 0$. We proceed  with the case that $A \neq -A$, and the case with $A = -A$ will follow from a similar argument. Since there are no minimal vanishing subsums with exactly 2 terms, it follows that there must exist a minimal vanishing subsum containing $\xi_{a}^{(\e q)^k}$ and $\xi_{a}^{(\e q)^{m+k}}$, for some non-negative integer $k$ and positive integer $m< n$. Write this subsum 
 $$
 \xi_{a}^{(\e q)^k} + \xi_{a}^{(\e q)^{m+k}} + \e_1\xi_{a}^{r^{\delta_1}(\e q)^{k_1}}+...+\e_j\xi_{a}^{r^{\delta_j}(\e q)^{k_j}}
 $$
 where $\e_i \in \{\pm 1\}$,  $\delta_i \in \{0,1\}$,  and the $k_i$ are positive integers. It is an immediate corollary of  \cite[Corollary 3.2]{LL00} that the terms in 
  \[
\xi_{a}^{-(\e q)^{k}}  (\xi_{a}^{(\e q)^k} + \xi_{a}^{(\e q)^{m+k}} + \e_1\xi_{a}^{(\e q)^{k_1}}+...+\e_j\xi_{a}^{(\e q)^{k_j}})
\]
\[
= 1 + \xi_{a}^{((\e q)^k((\e q)^{m}-1))} + \e_1\xi_{a}^{(\e q)^{k_1}}\xi_{a}^{-(\e q)^{k}}+...+\e_j\xi_{a}^{(\e q)^{k_j}}\xi_{a}^{-(\e q)^{k}}
\]
 all have square free order. If we consider the term  $\xi_{a}^{((\e q)^k(\e q)^{m}-1)} = (\xi_{a}^{(\e q)^k})^{((\e q)^{m}-1)}$  we see that this implies that, for any prime $\ell$ with $\ell^2|a$, we have that $\ell$ divides $((\e q)^{m}-1)$. Due to a classical result of Zsigmondy in \cite{Zs}, we see that, since we assumed $n \neq 2$ and $(n,q,\e) \notin\{(6,2,1),(3,2,-1)\}$, there exists a prime $\ell$ such that $\ell | ((\e q)^n-1)$ and $\ell \nmid ((\e q)^k-1)$ for $0< k < n$. Thus, the above implies that $\ell  || ((\e q)^n-1)$. Then we rewrite

\[
A- \sigma(A) =  \xi_{a}+...+\xi_{a}^{(\e q)^{n-1}} - \xi_{a}^r-...-\xi_{a}^{r(\e q)^{n-1}} \]
 \[
=\xi_{a_{\ell'}}\xi_{\ell}+...+\xi_{a_{l'}}^{(\e q)^{n-1}} \xi_{\ell}^{(\e q)^{n-1}}- \xi_{a_{\ell'}}^r\xi_{\ell}^r-...-\xi_{a_\ell'}^{r(\e q)^{n-1}}\xi_{\ell}^{r(\e q)^{n-1}}. 
 \]
 Since $n$ is minimal such that  $\ell|(q^n-(\e)^n)$,  we see that $\xi_{\ell},...,\xi_{\ell}^{(\e q)^{n-1}}$ are all distinct. If $\xi_{\ell}^r$ is not equal to any of $\xi_{\ell},...,\xi_{\ell}^{(\e q)^{n-1}}$, then by linear independence of primitive $\ell$th roots of unity over $\Q(\xi_{a_{\ell'}})$, we see that this sum cannot be zero and obtain a contradiction. If $\xi_{\ell}^r$ is  equal to one of  $\xi_{{\ell}},...,\xi_{\ell}^{(\e q)^{n-1}}$, then we can regroup the sum so that for some integers $l_1,..,l_{n-1}$ we have  
 $$A - \sigma(A) = \sum_{k=0}^{n-1}(\xi_{a_{\ell'}}^{(\e q)^k}-\xi_{a_{\ell'}}^{r (\e q)^{l_k}})\xi_{\ell}^{(\e q)^k}.$$ 
  Since, as discussed prior, we cannot have terms which are negatives of each other, we get  $(\xi_{a_{\ell}'}^{(\e q)^k}-\xi_{a_{\ell'}}^{r (\e q)^{l_k}}) \neq 0 $ for all $k$ and again linear independence gives a contradiction.

 Now we consider the case where $n = 2$. Since we assumed $(q,\e) \neq (3,-1)$ we see from the proof of Lemma that \ref{NumberTheoryLemma} the only non-trivial  Galois automorphism which fixes $\xi_a + \xi_{a}^{\e q}$ is the automorphism such that $\xi_a \mapsto \xi_a^{\e q}$, and arguing as above gives the result.  

 We lastly consider the case where $(n,q,\e) = (6,2,1).$ In this case $q^n-\e =63 $. Assume by way of contradiction there there exists some $r \in (\Z/63\Z)^\times \setminus\langle2\rangle $ such that
 \[
\xi_{63} + \xi_{63}^2+\xi_{63}^4+\xi_{63}^8+\xi_{63}^{16}+\xi_{63}^{32} - (\xi_{63}^r + \xi_{63}^{2r}+\xi_{63}^{4r}+\xi_{63}^{8r}+\xi_{63}^{16r}+\xi_{63}^{32r}) = 0
 \]
 If $\xi_7^r \in \{\xi_7^3,\xi_7^5,\xi_7^6\}$
we can regroup this sum as 
\[
(\xi_9-\xi_9^{r2^{k_1}})\xi_7 +(\xi_9^2-\xi_9^{r2^{k_2}})\xi_7^2 + ...+ (\xi_9^{32}-\xi_9^{r2^{k_6}})\xi_7^{32}.
\]
It is straightforward to see that, since $r  \in (\Z/63\Z)^\times \setminus\langle2\rangle $, we have $\xi_9^m - \xi_{9}^{r2^{k_m}} \neq 0$, so linear independence if $\xi_7,...,\xi_7^6$ over $\Q(\xi_9)$ gives a contradiction. In the case where  $\xi_7^r \in \{\xi_7,\xi_7^2,\xi_7^4\}$ we argue similarly by grouping the sum in to collections of $4$ terms and using fact that there are no minimal vanishing sums of root of unity with $4$  terms.\end{proof}

 \subsection{Proofs of the main Results.}
 Recall from the introduction that, if we let $d\in \mathbb{Z}^+$, $\epsilon \in \{\pm 1\}$ and we have some fixed prime power $q$, then we write

 $$
 \gamma_{\e,q}(d) := \sum_{k=0}^{d-1} \xi_{q^d-\e^d}^{(\epsilon q)^k}
 $$

We now prove Theorem \ref{main1}, the statement of which we recall here.

\main*

\begin{proof}
     Let $F:= \Q(\{\gamma_{\epsilon,q}(d): 1 \le d \le n\})$. We proceed by induction on $n$.   Note that the case of $n =1$ is trivial, so we  let $G = \operatorname{GL}^\e_n(q)$ for some $n$ and $q$ with $n >1$. Using Proposition \ref{reduction}, Lemmas \ref{SplitSemisimples} and \ref{divides} and the structures of semisimple elements in $G^* = G$, we see that it is sufficient to consider characters in series corresponding to semisimple elements with non-identity eigenvalues of the form 
    $(\zeta,\zeta^{\e q},...,\zeta^{(\e q)^{k-1}})$, where $|\zeta| = ({q}^k-\e^k)$ for $1\le k \le n$. Further, it is immediate from the characterization of the fields of values given in Remark \ref{tftgt} that, for a given $k$, it is sufficient to consider the cases where $\zeta= \zeta_{q^k-\e^k}$, as all other choices of $\zeta$ will give the same field. For $1 \le k\le n $ we let $s_k$ be the semisimple element with non-identity eigenvalues $\zeta_{q^k-\e^k},\zeta_{q^k-\e^k}^{\e q},...,\zeta_{q^k-\e^k}^{(\e q)^{k-1}}$ each with multiplicity $1$ and we let $\chi_k$ be the semisimple character in $\mathcal{E}(G,s_k).$

    First consider $k < n$. Then we can find some $s_{k}'$ in $\GL^\e_{n-1}(q)$ with the exact same list of non-identity eigenvalues with the same multiplicity. We then see that Remark \ref{tftgt} gives the exact same characterization of the field of values of the semisimple character in $\mathcal{E}(\GL^\e_{n-1}(q),s_k')$ as it does the field $\Q(\chi_k)$. Thus,  $\Q(\chi_{k}) \subseteq \Q(\GL_{n-1}^\e(q))$ and Proposition \ref{TypeAinclusions} along with the inductive hypothesis imply that we need not consider these characters.  It is then sufficient to consider the character $\chi_{n}$. Then Lemma \ref{numbertheorylemmabig} and Lemma \ref{NumberTheoryLemma} along with its proof imply that $\gamma_{\e,q}(n)$ is fixed by some Galois automorphism $\sigma$ if and only if $\sigma \in \Stab_\G(\{\xi_{q^n-\e^n},...,\xi_{q^n-\e}^{n-1}\})$ unless $(n,q,\e) \in \{(2,3,-1),(3,2,-1)\}$. Thus, if we are not in one of these cases it is then immediate from Remark \ref{tftgt} that $\Q(\chi_{n}) = \Q(\gamma_{\e,q}(n))$, which gives the result in these cases. If $(n,q,\e) \in \{(2,3,-1),(3,2,-1)\}$, then we have that  $\Stab_\G(\{\xi_8,\xi_8^5\})$ and $\Stab_\G(\{\xi_9,\xi_9^7,\xi_9^4\}) $ correspond to the fields $\Q(i)$ and $\Q(\xi_3)$ respectively. Further, in both cases we have $\gamma_{\e,q}(n) = 0$, so is straightforward to see that the result still holds in these cases.  
    \end{proof}

    \begin{corollary}\label{brauer}
        Let $q$ be a power of a prime, $\e\in\{\pm1\}$ and $n \in \Z^+$. If $\phi:\GL^\e_n(q)\rightarrow \GL_n(\overline{\mathbb{F}_q})$ is the natural embedding and $\chi_\phi$ is the corresponding Brauer character, then $\Q(\chi_\phi) = \Q(\GL^{\e}_n(q))$.
    \end{corollary}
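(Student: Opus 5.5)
Two inclusions are needed. For $\Q(\chi_\phi)\subseteq \Q(\GL^\e_n(q))$, compute the Brauer character directly. For $g\in \GL^\e_n(q)$ of $p'$-order (the only elements where $\chi_\phi$ is defined), $\chi_\phi(g)$ is the sum of the complex lifts, under the fixed isomorphism of $\overline{\mathbb{F}_p}^\times$ with the $p'$-roots of unity, of the eigenvalues of $g$ in $\overline{\mathbb{F}_q}$. These eigenvalues fall into Frobenius orbits. An orbit of size $d$ has the form $\{\zeta,\zeta^{\e q},\dots,\zeta^{(\e q)^{d-1}}\}$ with $\zeta^{(\e q)^d}=\zeta$, so $|\zeta|$ divides $q^d-\e^d$. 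Hence each orbit contributes $\sum_k \xi_{q^d-\e^d}^{j(\e q)^k}$ for some integer $j$, and this is $\sigma_j(\gamma_{\e,q}(d))$ with $\sigma_j\in\G$ the automorphism $\xi\mapsto\xi^j$ on roots of unity of order dividing $q^d-\e^d$. (If $(j,q^d-\e^d)\neq1$, use instead the appropriate element of $\G$ acting as $j$-th power modulo the order of $\xi_{q^d-\e^d}^j$; alternatively, just note that the orbit sum is fixed by every $\sigma$ fixing $\gamma_{\e,q}(d)$.) The cleanest route is through Galois stabilizers. By Lemma \ref{numbertheorylemmabig} (and its proof in the exceptional cases), $\sigma$ fixes $\gamma_{\e,q}(d)$ exactly when $\sigma$ acts on $\xi_{q^d-\e^d}$ as a power of $\e q$. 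Such a $\sigma$ then acts on every root of unity of order dividing $q^d-\e^d$ as the same power of $\e q$, and so permutes each orbit and fixes each orbit sum. Since Theorem \ref{main1} gives $\Q(\GL^\e_n(q))=\Q(\{\gamma_{\e,q}(d): 1\le d\le n\})$, every value $\chi_\phi(g)$ is fixed by $\Stab_\G(\Q(\GL^\e_n(q)))$.

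For the reverse inclusion, realize each generator as a value of $\chi_\phi$. For $1\le d\le n$, take the semisimple element $s_d\in \GL^\e_n(q)$ used in the proof of Theorem \ref{main1}. Its non-identity eigenvalues are $\zeta_{q^d-\e^d},\dots,\zeta_{q^d-\e^d}^{(\e q)^{d-1}}$, and it has $n-d$ further eigenvalues equal to $1$. Then $\chi_\phi(s_d)=\gamma_{\e,q}(d)+(n-d)$, so $\gamma_{\e,q}(d)\in\Q(\chi_\phi)$. Theorem \ref{main1} then yields the equality.

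The main obstacle is the orbit-sum step of the first inclusion, specifically the claim that an orbit generated by a non-primitive power still has its sum in $\Q(\gamma_{\e,q}(d))$. The stabilizer argument above handles it, and for the small exceptional triples where Lemma \ref{numbertheorylemmabig} fails, the proof of Theorem \ref{main1} already identified the relevant fields explicitly, so those cases can be checked directly.
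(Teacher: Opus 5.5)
Your proposal is correct and is essentially the paper's argument: both inclusions go through the decomposition of $\chi_\phi(g)$ into sums over $x\mapsto x^{\e q}$ orbits. The inclusion $\Q(\chi_\phi)\subseteq\Q(\GL^\e_n(q))$ comes from the stabilizer consequence of Lemma~\ref{numbertheorylemmabig} together with Theorem~\ref{main1}, and the reverse inclusion from the values $\chi_\phi(s_d)=\gamma_{\e,q}(d)+(n-d)$, which you make more explicit than the paper does. One correction: for $(d,q,\e)\in\{(3,2,-1),(2,3,-1)\}$ the lemma's equivalence genuinely fails, since $\gamma_{\e,q}(d)=0$ there, so these cases are not covered by its proof. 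The direct check you defer to amounts to observing that every orbit sum of exact size $d$ vanishes in these cases, namely $\xi_9^{j}+\xi_9^{7j}+\xi_9^{4j}=0$ for $3\nmid j$ and $\xi_8^{j}+\xi_8^{5j}=0$ for odd $j$; this is what the paper's direct computation does.
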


    \begin{proof}
        Let $g \in \GL^{\e}_n(q)$. Then the set of eigenvalues of $g$, with multiplicity, will take the form
        $$
        \zeta_{a_1},\zeta_{a_1}^{\e q}....,\zeta_{a_1}^{(\e q)^{n_1 -1}},\zeta_{a_2},\zeta_{a_2}^{\e q},...,\zeta_{a_2}^{(\e q)^{n_2-1}},...,\zeta_{a_k},...,\zeta_{a_k}^{{(\e q)}^{n_k-1}}
        $$
        for integers $k,a_1,...a_k,n_1,...n_{k}$ such that each $n_i$ is minimal with respect the to the property that $\zeta_{a_i}^{(\e q)^{n_i}} = 1 $ and $\sum n_i =n$. From this description of the eigenvalues of an arbitrary element, it follows that the Brauer character $\chi_\phi$ will have field of values $$\Q(\xi_{a}+...+\xi_{a}^{{ (\e q)}^{n_0-1}}: a,n\in \mathbb{Z}^{+}, n_0 \le n, n \text{ minimal such that } a|(q^{n_0}-\e^{n_0}) )$$ 
        
        It is immediate from this and Theorem \ref{main1} that $\Q(\chi_\phi) \supseteq\Q(\GL^\e_n(q))$.
        Let  $a,n\in \mathbb{Z}^{+}$ such that  $ n_0 \le n$ and $ n $ is minimal such that $ a|(q^{n_0}-\e^{n_0}) $. Then, from Theorem \ref{main1} it is sufficient to show that $\xi_{a}+...+\xi_{a}^{{ (\e q)}^{n_0-1}} \in \Q(\xi_{q^{n_0}-\e^{n_0}}+...+ \xi_{q^{n_0}-\e^{n_0}}^{{(\e q)}^{n_0-1}})$. First assume $(n_0,q,\e) \notin \{(3,2,-1),(2,3,-1)\}$. Then by the fundamental theorem of Galois theory it is sufficient to show that any automorphism in $\G$ that fixes  $\xi_{q^{n_0}-\e^{n_0}}+...+ \xi_{q^{n_0}-\e^{n_0}}^{{\e q}^{n_0-1}}$ must also fix $\xi_{a}+...+\xi_{a}^{{ (\e q)}^{n_0-1}}$. This is immediate from \ref{numbertheorylemmabig}. If $(n_0,q,\e) \in \{(3,2,-1),(2,3,-1)\},$ then we can compute directly, using Theorem \ref{main1}, that for all choices of $a$ we have $\xi_{a}+...+\xi_{a}^{{ (\e q)}^{n_0-1}} \in\Q( \GL^\e_2(q)) \subseteq \Q(\GL^\e_n(q)).$
    \end{proof}

\begin{lemma}\label{neweigenvalues}
    Let $G = \SL^{\epsilon}_n(q)$ and let $s$ be a semisimple element of $\GL^{\epsilon}_n(q)$ such that the complete collection of non-identity eigenvalues of $s$ is $(\zeta_{a},\zeta_{a}^{\epsilon q},...,\zeta_{a}^{(\epsilon q)^{n-1}})$ with $a|(q^n-\epsilon^n)$.  Let $\chi \in \mathcal{E}(\GL_n(q),s)$ be semisimple. Then, if $b = (a,q-\epsilon)$, we have  that $\sigma \in \G$ fixes the restriction $\chi_G$ if and only if $\sigma$ fixes the multiset $X:= \{ \xi_{a/b},\xi_{a/b}^{\epsilon q} ,...,\xi_{a/b}^{(\epsilon q)^{n-1}}\}$.
\end{lemma}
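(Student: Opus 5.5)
We work with $\tilde G := \GL^\e_n(q)$, $G = \SL^\e_n(q)$, and $\tilde G^* \cong \tilde G$. Let $\sigma \in \G$ with $\sigma(\xi_{|\tilde G|}) = \xi_{|\tilde G|}^r$. The plan is to translate \emph{$\sigma$ fixes $\chi_G$} into a condition on $\sigma(s)$ up to central translation in $\tilde G^*$, and then into a congruence condition on $r$ modulo $a/b$.

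\textbf{Step 1 (Clifford theory).} Since $\tilde G/G$ is cyclic, the restriction of $\chi$ to $G$ is multiplicity free. Moreover, for $\psi \in \Irr(\tilde G)$ the restrictions $\psi_G$ and $\chi_G$ have a common constituent if and only if $\psi = \chi\lambda$ for some linear $\lambda \in \Irr(\tilde G/G)$, by Gallagher's theorem and Frobenius reciprocity. In that case $\psi_G = \chi_G$. Since $\sigma(\chi_G) = \sigma(\chi)_G$, it follows that $\sigma$ fixes $\chi_G$ if and only if $\sigma(\chi) = \chi\lambda$ for such a $\lambda$.

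The linear characters of $\tilde G$ trivial on $G$ are the semisimple characters $\hat z$ in the series $\E(\tilde G, z)$ with $z \in \mathbf{Z}(\tilde G^*)$. These $z$ are the scalars whose eigenvalue lies in $\mu_{q-\e}$. Tensoring with $\hat z$ maps $\E(\tilde G,s)$ to $\E(\tilde G,sz)$ and is compatible with Jordan decomposition (cf. \cite[Section 2.6]{GM20}), so $\chi\hat z$ is the semisimple character of $\E(\tilde G, sz)$. Likewise, by Theorem \ref{SV20}, $\sigma(\chi)$ is the semisimple character of $\E(\tilde G,\sigma(s))$. Therefore $\sigma$ fixes $\chi_G$ if and only if $\sigma(s)$ is conjugate to $sz$ for some $z \in \mu_{q-\e}$. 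Since $s$ has exactly $n$ eigenvalues, all listed, this is equivalent to the multiset equality
$$\{\zeta_a^{r(\e q)^j}\}_{j=0}^{n-1} = z\cdot\{\zeta_a^{(\e q)^j}\}_{j=0}^{n-1} \quad \text{for some } z \in \mu_{q-\e}.$$

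\textbf{Step 2 (number theory).} We show the equality above holds if and only if $r \equiv (\e q)^k \pmod{a/b}$ for some $k$.

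Suppose the equality holds. Then $\zeta_a^r = z\zeta_a^{(\e q)^k}$ for some $k$, so $z \in \mu_a \cap \mu_{q-\e} = \mu_b$. Writing $z = \zeta_a^{(a/b)t}$ gives $r \equiv (\e q)^k \pmod{a/b}$.

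Conversely, suppose $r \equiv (\e q)^k \pmod{a/b}$. Put $z := \zeta_a^{r-(\e q)^k}$; this lies in $\mu_b \subseteq \mu_{q-\e}$. Because $z^{q-\e} = 1$, we have $z^{\e q} = z$, and hence
$$\zeta_a^{r(\e q)^j} = \bigl(z\zeta_a^{(\e q)^k}\bigr)^{(\e q)^j} = z\,\zeta_a^{(\e q)^{k+j}}.$$
Since $(\e q)^n \equiv 1 \pmod a$, the exponents $(\e q)^{k+j}$ run over the same orbit as $j$ varies, so the two multisets agree.

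\textbf{Step 3 (conclusion).} The multiset $X$ is the $\langle \e q\rangle$-orbit of $\xi_{a/b}$, listed with uniform multiplicity. Hence $\sigma$ fixes $X$ if and only if $\xi_{a/b}^r = \xi_{a/b}^{(\e q)^k}$ for some $k$, which is the congruence in Step 2.

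The main obstacle is Step 1. Two points need care there: the Clifford/Gallagher fact that restrictions from $\tilde G$ to $G$ are either equal or disjoint, and the identification of $\chi\hat z$ as the semisimple character of $\E(\tilde G, sz)$. I would cite the standard compatibility of Jordan decomposition with central twisting, together with the uniqueness of the semisimple character in each series of $\GL^\e_n(q)$. Step 2 is elementary once one observes that $z$ is forced into $\mu_b$.
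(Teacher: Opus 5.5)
Your proof is correct and follows the paper's route: it translates ``$\sigma$ fixes $\chi_G$'' into ``$\sigma(s)$ is conjugate to $zs$ for a central $z$'', forces $z$ into $\mu_b$, and reads off the congruence $r\equiv(\e q)^k \pmod{a/b}$. Your Clifford--Gallagher argument and the observation $z^{\e q}=z$ spell out two steps the paper only asserts; one small fix is that $\sigma(\chi)$ being the semisimple character of $\E(\GL^\e_n(q),\sigma(s))$ does not follow from Theorem \ref{SV20} as stated, but it follows from degrees, since $\mathbf{C}(\sigma(s))=\mathbf{C}(s)$ and the only unipotent character of degree $1$ of this centralizer is the trivial one.
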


\begin{proof}
    Let $s$ be as above.  Let $\sigma \in \mathcal{G}$ such that $\sigma$ stabilizes $X$. Let  $r \in \Z $ such that $\sigma(\xi_{|G|}) = \xi_{|G|}^r$. Since $\sigma$ stabilizes $X$, we have $\sigma(\xi_{a/b})=\xi_{a/b}^r = \xi_{a/b}^{(\epsilon q)^k}$ for some $k$ and   $r \equiv (\epsilon q^k)(\text{mod } a/b)$. Thus, $r = (\epsilon q^k)+ (a/b)r_0$ for some $r_0 \in \Z$. Then there exists $0 \le l < b $ such that $r_0 + l$ is divisible by $b$.  Then $z := \operatorname{diag}((\zeta_a)^{(a/b)l},...,(\zeta_a)^{(a/b)l})$ lies in the center of $\tilde{G}$. Consider $\sigma(s)z$. We see that $\sigma(s)z$ has the eigenvalue $\zeta^{((\epsilon q)^k+ (a/b)r_0)}_{a}(\zeta_a)^{(a/b)l} = \zeta_{a}^{((\epsilon q)^k + (a/b)(r_0+l))}= \zeta_{a}^{(\epsilon q)^k}$. This implies that $z\sigma(s)$ is conjugate to $ s$ and that for the semisimple character $\chi \in \mathcal{E}(\tilde{G},s)$ we have $\sigma(\chi_{G}) =\chi_G$.

    Conversely assume $\sigma \in \mathcal{G}$ fixes $\chi_G$. Then  we have that there exists $z \in \mathbf{Z}(\tilde{G})$ such that
  $\sigma(s) $ is conjugate to $ zs$.  We have that $z$ will be a scalar matrix for the scalar $\zeta_{q-\epsilon}^\ell$ for some $\ell$. Then $\sigma(\zeta_a) = \zeta_a^{(\epsilon q)^k} \zeta_a^{m_0}$ such that $\zeta^{m_0} = \zeta_{q-\epsilon}^{-\ell}$. Since the order of $\zeta_a^{m_0}$ divides both $(q-\epsilon)$ and  $a$,  we can write $m_0 = (a/b)m$ for some $m$.
  We again have that there exists some $r$ such that $\sigma(\xi_{|G|}) = \xi^r $, and the above gives that $r \equiv (\epsilon q)^k + (a/b)m (\text{mod } a)$. It follows that $\sigma(\xi_{a/b}) = \xi_{a/b}^{(\epsilon q)^k}$ and $\sigma$ fixes $X$.
\end{proof}

We now compute the fields of values of the groups $\SL_n^\e(q)$ (note that the case of $n = 2$ was considered in \cite[Theorem 1.3]{DOW23} so we don't consider it here).

\begin{proposition}\label{SL}
    Let $n \ge 3$, $q = p^a$ for some prime $p$,  $\epsilon \in \pm 1$ and, if $p\neq 2$ let $\eta \in \{\pm1\}$ such that $p \equiv \eta \mod 4$. If $a = {\frac{q^n-\e^n}{q-\e}}$ and  $X_a := \xi_a+\xi_a^{\epsilon q}+...+\xi_a^{(\epsilon q)^{n-1}}$, then

    \[\Q(\SL^{\epsilon}_n(q))= \begin{cases} 
      \Q(\GL^{\epsilon}_{n-1}(q))(X_a,\sqrt{\eta q}) & q \text{ odd, } q \text{ not a square, } 2\le n_2\le (q-\epsilon)_2\\
      \Q(\GL^{\epsilon}_{n-1}(q))(X_a) & \text{otherwise} \\
   \end{cases}.
    \]
    
\end{proposition}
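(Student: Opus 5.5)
Write $G=\SL^\e_n(q)$ and $\tilde G=\GL^\e_n(q)$. The field $\Q(G)$ is generated by the values of the irreducible constituents of restrictions $\tilde\chi_G$ for $\tilde\chi\in\Irr(\tilde G)$, and the plan is to prove the two inclusions separately.

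\textbf{The inclusion $\supseteq$.}
\begin{enumerate}
\item By Proposition \ref{TypeAinclusions}(ii), $\Q(\GL^\e_{n-1}(q))\subseteq\Q(G)$.
\item Let $s$ be the semisimple element of $\tilde G$ with eigenvalues $\zeta_a,\zeta_a^{\e q},\dots,\zeta_a^{(\e q)^{n-1}}$, where $a=(q^n-\e^n)/(q-\e)$. Since the product of these eigenvalues is $\zeta_a^{a}=1$, we have $s\in G$. Its centralizer is a torus $\GL^{\e^n}_1(q^n)$, so the semisimple character $\chi\in\E(\tilde G,s)$ is irreducible Deligne--Lusztig $\pm R_T(\theta)$ in general position. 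Hence $\chi_G$ is irreducible: the eigenvalues $\zeta_a\zeta_{q-\e}^{\ell}$ of $zs$ differ from those of $s$ for central $z\neq1$, so $s$ is not $\mathbf{Z}$-conjugate to any $zs$.
\item Here $b=(a,q-\e)$ and the relevant exponent is $a$ itself. Lemma \ref{neweigenvalues} shows that $\Q(\chi_G)$ is the fixed field of the stabilizer of $\{\xi_{a/b}^{(\e q)^k}\}$.
\item To see that this field contains $X_a$, use the argument of Lemma \ref{numbertheorylemmabig} adapted to $a$: a Zsigmondy prime $\ell$ for $(\e q)^n-1$ divides $a$ exactly once. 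Alternatively, compare directly with the field of the character of $\tilde G$ attached to $s$ and lift the statement back.
\end{enumerate}
Step 4 may need care. If instead the computation shows that $\Q(\chi_G)$ equals the fixed field corresponding to $X_a$ directly (as the statement suggests), then by the proof of Theorem \ref{main1} $X_a$ generates it, and this inclusion follows.

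\textbf{The square root $\sqrt{\eta q}$.} When $q$ is odd, not a square, and $2\le n_2\le(q-\e)_2$, I will invoke \cite[Section 4]{Tur01} and \cite[Theorem 6.1]{SV19}. They show that some constituent of a restriction (for instance one attached to a regular element with centralizer in $\tilde G$ of index divisible by a suitable power of $2$, i.e.\ a non-trivial Gelfand--Graev splitting) has field containing $\sqrt{\eta q}$. This gives $\sqrt{\eta q}\in\Q(G)$ in that case.

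\textbf{The inclusion $\subseteq$.} Take $\sigma$ fixing the right-hand side; we must show $\sigma$ fixes every $\psi\in\Irr(G)$. Let $\psi$ be a constituent of $\tilde\chi_G$ with $\tilde\chi\in\E(\tilde G,s)$.
\begin{enumerate}
\item First show that $\sigma$ fixes $\tilde\chi_G$, i.e.\ that $\sigma(s)$ is conjugate to $zs$ for some central $z$, together with the Jordan-decomposition compatibility from Theorem \ref{SV20}.
\begin{itemize}
\item If $s$ has an eigenvalue $1$, or more generally an eigenvalue in $\GL^\e_1(q)$, then after multiplying by a central element the character reduces to one coming from a proper Levi subgroup. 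Its field then lies in $\Q(\GL^\e_{n-1}(q))$ by Lemmas \ref{SplitSemisimples}, \ref{divides} and \ref{ones}, together with the fact that $\sigma$ fixes $\xi_{q-\e}$ (since $\xi_{q-\e}\in\Q(\GL^\e_1(q))$).
\item Otherwise the elementary divisors of $s$ split into orbits of sizes less than $n$, each handled in $\GL^\e_{n-1}(q)$ after central twisting, or $s$ is a single orbit of size $n$. In the latter case Lemma \ref{neweigenvalues} combined with Lemma \ref{divides} reduces to $X_{a'}$ with $a'$ dividing $a$ times a factor of $q-\e$.
\end{itemize}
\item Second, when $\tilde\chi_G$ is reducible, show that $\sigma$ permutes its constituents trivially. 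By \cite{SV19} and \cite{Tur01}, the action of $\G$ on these constituents is governed exactly by $\sqrt{\eta q}$ under the stated $2$-adic conditions, and is trivial otherwise.
\end{enumerate}

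\textbf{Expected main obstacle.} The bookkeeping with central twists in the first part of the $\subseteq$ argument is routine. The hard step is the second part: controlling the Galois action on the constituents of $\tilde\chi_G$, which is where the cited results of \cite{SV19} and \cite{Tur01} are essential.
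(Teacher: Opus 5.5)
\textbf{The gap: your choice of $s$ in the $\supseteq$ direction.} The error is at the step that puts $X_a$ into $\Q(G)$. You take $s$ with eigenvalues $\zeta_a^{(\e q)^k}$ of order $a=(q^n-\e^n)/(q-\e)$, so that $s\in G$. But then, as you say in your step~3, Lemma~\ref{neweigenvalues} identifies $\Q(\chi_G)$ with the fixed field of the stabilizer of $\{\xi_{a/b}^{(\e q)^k}\}$, where $b=(a,q-\e)$. That field is generated by $X_{a/b}$, the generator for $\PSL^\e_n(q)$, and in general it does not contain $X_a$.

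For $(n,q,\e)=(3,4,1)$ you get $a=21$, $b=3$, so $\Q(\chi_G)=\Q(\xi_7+\xi_7^2+\xi_7^4)=\Q(\sqrt{-7})$. On the other hand $X_{21}=\xi_3(\xi_7^{-1}+\xi_7^{-2}+\xi_7^{-4})=\xi_3\cdot\tfrac{-1-\sqrt{-7}}{2}\notin\Q(\sqrt{-7})$.

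Neither of your step~4 remedies works. The Zsigmondy argument would at best show that $X_{a/b}$ generates $\Q(\chi_G)$. Comparing with the $\tilde G$-character runs the wrong way: $\Q(\chi_G)\subseteq\Q(\chi)$, so knowing $X_a\in\Q(\chi)$ says nothing about $\Q(G)$.

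The paper avoids this by dropping the requirement $s\in G$, which matters only for the simple quotient. It takes $s\in\tilde G$ with eigenvalues $\zeta_{q^n-\e^n}^{(\e q)^k}$. Lemma~\ref{neweigenvalues}, applied with modulus $q^n-\e^n$ and gcd $q-\e$, then gives exactly the stabilizer of $K_a=\{\xi_a^{(\e q)^k}\}$. That stabilizer obviously fixes $X_a$, so $X_a\in\Q(\chi_G)\subseteq\Q(G)$ with no number theory at all. Irreducibility of $\chi_G$ plays no role. Also, for $(3,2,-1)$ your $s$ is the central element $\zeta_3 I$, so its centralizer is not a torus.

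\textbf{Where the number theory is actually needed.} It belongs in the $\subseteq$ direction, and there your single-orbit case never states the fact you need.

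For an orbit of eigenvalue order $c\mid q^n-\e^n$, $\Q(\chi_G)$ is the fixed field of $\Stab_\G(K_{c/d})$ with $d=(c,q-\e)$. Since $c/d\mid a$, we have $\Stab_\G(K_a)\le\Stab_\G(K_{c/d})$. So the crux is that every $\sigma$ fixing the \emph{sum} $X_a$ stabilizes the \emph{multiset} $K_a$.

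This is the vanishing-sums argument of Lemma~\ref{numbertheorylemmabig}, run with modulus $a$. A Zsigmondy prime $\ell$ for $(\e q)^n-1$ does not divide $q-\e$, hence divides $a$. The cases $(6,2,1)$ and $(3,2,-1)$, which have no Zsigmondy prime, are checked by hand.

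Note that $\ell$ need not divide $a$ exactly once, contrary to your step~4: for $(n,q,\e)=(5,3,1)$ one has $a=121=11^2$. In the paper's argument, $\ell^2\mid a$ is ruled out only inside the proof by contradiction, via the square-free-order property of minimal vanishing sums.

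\textbf{The $\sqrt{\eta q}$ step.} You must exhibit an element satisfying the hypothesis of \cite[Theorem 6.1]{SV19}. The paper writes $n_2=2^k$ and takes $s\in G$ whose eigenvalues are all the primitive $2^{k+1}$-th roots of unity, each with the same multiplicity. Then $\zeta_{n_2}s$ is conjugate to $s$, as required.

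\textbf{What agrees with the paper.} The rest of your $\subseteq$ outline matches the paper. Multi-orbit $s$ give $\Q(\chi_G)\subseteq\Q(\chi)\subseteq\Q(\GL^\e_{n-1}(q))$, and \cite{SV19} handles the passage from $\chi_G$ to its constituents.
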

\begin{proof}
 Let $G := \SL_n^\e(q)$ and $\tilde{G} := \GL^\e_n(q)$.
  Given some  irreducible character $\chi \in \Irr(\tilde{G})$ and some constituent $\theta $ of $\chi_{G}$, the work of \cite[Section 4]{Tur01} and \cite{SV19} describe the field of values of $\theta$ in terms of the field of values of $\chi_{G}$. Assume $q$ odd $q$ not a square and $2 \le n_2 \le (q -\e)_2$. Then, \cite[Theorem 6.1]{SV19} gives that if there exist  a semisimple element $s 
\in \GL^{\epsilon}_n(q)$ such that $\operatorname{diag}(\zeta_{n_2})s$  is conjugate to $s$ and $s \in G$, then $\sqrt{\eta q} \in \Q(\SL^\e_n(q))$. If $n_2 = 2^k$, then $2^{k+1}$ divides $q^2-1$.  Then, if  $s$ is the semisimple elements whose eigenvalues are all of the primitive $2^{k+1}$th roots of unity, all with equal multiplicity, then $s$ meets the requirement in  \cite[Theorem 6.1]{SV19} and $\sqrt{\eta q} \in \Q(G)$. 

We now show $\Q(\GL^\e_{n-1}(q))(X_a) \subseteq  \Q(\SL^\e_n(q))$
   Proposition \ref{TypeAinclusions} gives that $\Q(\GL^\e_{n-1}(q)) \subseteq \Q(G)$, so it is sufficient to show $X_a \in \Q(G) $. Let $s\in \GL_n^\e(q)$ such that the set of nontrivial eigenvalues of $s$ is $(\zeta,\zeta^{\epsilon q},...,\zeta^{(\epsilon q)^{n-1}})$ with $q^n-\e^n =:|\zeta|$. Then, it is immediate from Lemma \ref{neweigenvalues} that $X_a$ is contained in the field of values of the semisimple character in the series corresponding to $s$. Thus, we have $\Q(\GL^\e_{n-1}(q))(X_a,\sqrt{\eta q}) \subseteq \Q(G)$ in the case where $q \text{ odd, } q \text{ not a square, } 2\le n_2\le (q-\epsilon)_2$ and we have $\Q(\GL^\e_{n-1}(q))(X_a) \subseteq \Q(G)$ in all other situations.

   We now show the other containment. Again using \cite[Theorem 6.1]{SV19} it is sufficient to show that $\Q(\chi_G:\chi\in \Irr(\tilde{G})) \subseteq \Q(\GL^\e_{n-1}(q))(X_a)$ in all cases.   First we  consider characters in Lusztig series corresponding to semisimple elements in $\tilde{G}$ with complete collection of non-identity eigenvalues of the form $(\zeta,\zeta^{\epsilon q},...,\zeta^{(\epsilon q)^{n-1}})$ such that $|\zeta|=c$, where  $c|(q^n-\epsilon^n)$  and $c\nmid (q^k-\e^k)$ for all $1\le k <n$.  Let $s$ be such a semisimple element. Then  by \cite[Proposition 1A]{FS82}  $\mathbf{C}_{\tilde{G}}(s)$ is cyclic and has only one unipotent character, so the series $\mathcal{E}(\tilde{G},s)$ contains exactly one character.  By arguing similarly to the proof of Lemma \ref{neweigenvalues} we get that if $s \in \GL^{\epsilon}_n(q)$ has eigenvalues $(\zeta,\zeta^{\epsilon q},...,\zeta^{(\epsilon q)^{n-1}})$ with $c :=|\zeta|$ such that  $c|(q^n-\epsilon^n)$ and $\chi \in\mathcal{E}(\GL^{\epsilon}_n(q),s)$, is semisimple, then, under the fundamental theorem of Galois theory, $\Q(\chi|_G) $ corresponds to $\Stab_\G(K_{c/d})$, where $K_{c/d}: = \{\xi_{c/d},\xi_{c/d}^{\epsilon q} ,...,\xi_{c/d}^{(\epsilon q)^{n-1}}\}$ and $d =  (c,q-\epsilon)$. It is straightforward to see that if $c'|c$, $d' = (c',q-\e)$ and $c|(q^n-\e^n)$, then $\Stab_\G(K_{c/d}) \le \Stab_\G(K_{c'/d'})$. It follows that it is sufficient to consider the case where $ c  = q^n-(\epsilon)^n$ and $c/d = a$. 

  Let $s$ be as above with $c =  q^n-(\epsilon)^n$. To show $\Q(\chi_G) \subseteq \Q(X_a)$ it is then sufficient to show that if $\sigma \in \Stab_\G(X_a)$ then $\sigma \in \Stab_\G(\chi_G) =\Stab_\G(K_a)$.  We note that if a  so-called Zsigmondy prime divides $q^n-\e^n$, then it must divide $a/b = (q^n-\e^n)/(q -\e)$.  So, in cases where the Zsigmondy prime exists, we can argue exactly as in the proof of Lemma \ref{numbertheorylemmabig} to get that a Galois automorphism fixes $X_a$ if and only if it fixes $K_{a/b}$, so we need only consider the exceptions.  We can use a similar argument and  Lemma \ref{NumberTheoryLemma} for the cases with $n = 2$. The only remaining cases are then when $(n,q,\e) \in \{(6,2,1), (3,2,-1)\}$. These follow from straightforward computations similar to those which appear in the proof of Lemma \ref{numbertheorylemmabig}.

  Now we consider the case where $\chi$ is contained in a series indexed by a semisimple element which is not of the form discussed above. Then from the proof of Proposition \ref{reduction} we see that the fields of values of $\chi$ is contained in the fields of values of some semisimple character $\chi'$ in a series index by a semisimple element which is not of the form discussed above. Then, using Lemma \ref{SplitSemisimples} and Remark \ref{tftgt} we see that the fields of values of $\chi'$ is contained in a field which is a subfield of $\Q(\GL_{n-1}(q))$. Thus, if $\chi_0$ is a constituent of $\chi_G,$ then, using \cite[Theorem 6.1]{SV19}  $\Q(\chi_0) \subseteq \Q(\chi_G,\sqrt{\eta q}) \subseteq \Q(\GL_{n-1}(q))(\sqrt{\eta q})$  when $q \text{ odd, } q \text{ not a square, } 2\le n_2\le (q-\epsilon)_2$ and $\Q(\chi_0) \subseteq \Q(\chi_G) \subseteq \Q(\GL_{n-1}(q))$ otherwise.
  This concludes the proof of this containment 
\end{proof}

We now consider the simple groups $\PSL_n^{\epsilon}(q)$.


\begin{theorem}\label{PSL}
Let $n \ge 3,$ $q $ a power of a prime and $\e \in \{\pm 1\}$ such that 
$$
(n,q,\e) \notin\{(3,2,-1),(3,7,1),(3,5,-1),(3,4,1),(4,3,1),(4,5,1),(4,3,-1),(5,4,-1)\}.
$$  Further, let $a = (q^n-\e)/(q-\e)$, $b = (q-\e,a)$ and $K  = \xi_{a/b}+\xi_{a/b}^{\e q}+...+\xi_{a/b}^{(\e q)^{n-1}}$. 
   
   \[\Q(\PSL^{\epsilon}_n(q))= \begin{cases} 
      \Q(\GL^{\epsilon}_{n-1}(q))(K,\sqrt{\eta q}) & q \text{ odd, } q \text{ not a square, } 2\le n_2\le (q-\epsilon)_2\\
      \Q(\GL^{\epsilon}_{n-1}(q))(K) & \text{otherwise}. \\
   \end{cases}
\]
Further,  if $\eta'\equiv q \text{ (mod 4)}$ with $\eta' \in \{\pm1\},$ then
\[
\Q(\PSL^{\epsilon}_2(q))=
\begin{cases} 
      \Q(\xi_{(q-1)/2}+\xi_{(q-1)/2}^{-1},\xi_{(q+1)/2}+\xi_{(q+1)/2}^{-1},\sqrt{\eta' q}) & q\text{ odd} \\
    \Q(\xi_{q-1}+\xi_{q-1}^{-1},\xi_{(q+1)}+\xi_{(q+1)}^{-1}) & q\text{ even.} 
   \end{cases}
\]
\end{theorem}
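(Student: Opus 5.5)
We follow the strategy of Proposition \ref{SL}, with $G=\SL^\e_n(q)$, $\tilde G=\GL^\e_n(q)$ and $S=\PSL^\e_n(q)=G/\mathbf{Z}(G)$. The characters of $S$ are the characters of $G$ trivial on $\mathbf{Z}(G)$. By Corollary \ref{center}, applied with $G^*=\PGL^\e_n(q)$, these lie in series $\E(G,\bar s)$ with $\bar s\in\mathbf{O}^{p'}(\PGL^\e_n(q))$. Lifting to $\tilde G$, these are exactly the constituents of restrictions $\chi_G$ with $\chi\in\E(\tilde G,s)$ and $\det s$ lying in $(\GL_1^\e(q))^n$, the group of $n$th powers (and the determinant may be adjusted by central scalars). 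So we must compute the field generated by the values of the constituents of $\chi_G$ for such $s$.

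For the lower containment we argue in three steps.

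First, we show $\Q(\GL^\e_{n-1}(q))\subseteq\Q(S)$. We rerun the proof of Proposition \ref{TypeAinclusions}(ii), but choose the witnessing semisimple element to have determinant $1$. Given $s\in\GL^\e_{n-1}(q)$ with nonidentity eigenvalues outside $\GL_1^\e$, we replace $\mathrm{diag}(s,1)$ by $\mathrm{diag}(s,\det s^{-1})$. Alternatively, we pair an eigenvalue $\zeta$ with $\zeta^{-1}$ to kill the determinant; for example, for $\sigma$ moving $\xi_{q-\e}$ we use $\mathrm{diag}(\zeta_{q-\e},\zeta_{q-\e}^{-1},1,\dots)$. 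This requires $n\ge3$ together with some care to keep $\sigma(s)$ non-conjugate to $zs$ for every central $z$.

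Second, for $K$ we take $s$ with eigenvalues $\zeta_{a/b}^{(\e q)^i}$, viewing $\zeta_{a/b}$ as an element of order dividing $q^n-\e^n$. We compute its determinant $\zeta_{a/b}^{(q^n-\e^n)/(q-\e)}=\zeta_{a/b}^{a}$. Adjusting $s$ by scalars should place it in $\mathbf{O}^{p'}$; a variant of Lemma \ref{neweigenvalues} then shows that the field of $\chi_G$ corresponds to $\Stab_\G$ of the orbit of $\xi_{(a/b)/(a/b,q-\e)}$, and a Zsigmondy argument as in Lemma \ref{numbertheorylemmabig} identifies this with $\Q(K)$.

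Third, for $\sqrt{\eta q}$ we reuse the element from Proposition \ref{SL}: all primitive $2^{k+1}$th roots of unity with equal multiplicity. We check that its image lies in $\mathbf{O}^{p'}$, possibly multiplying by a suitable scalar of $2$-power order, and then apply \cite[Theorem 6.1]{SV19}.

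For the upper containment, Proposition \ref{SL} gives $\Q(S)\subseteq\Q(G)$. It remains to show that the extra generator $X_a$ is not needed, i.e. that elements of type $\Stab_\G(K_c)$ with $c$ a divisor of $q^n-\e^n$ coming from determinant-admissible $s$ only yield fields inside $\Q(\GL^\e_{n-1}(q))(K)$. The non-Coxeter-torus series are handled exactly as before via Proposition \ref{reduction}, Lemma \ref{SplitSemisimples} and Theorem \ref{main1}. For the regular elements in the Coxeter torus, the determinant condition $\zeta^{a}\in(\GL_1^\e)^n$ forces $\zeta$ modulo scalars to have order dividing a quantity whose $\Q$-field lies in $\Q(K)$.

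The main obstacle is this careful matching of the determinant/$\mathbf{O}^{p'}$ condition with the order $a/b$, together with the small exceptional cases: when Zsigmondy primes fail or the orbit sums vanish, the listed triples must be excluded or checked directly. We will handle generic cases by the linear-independence argument and leave the listed exceptions aside. The $n=2$ formulas follow from the known character table of $\SL_2(q)$ \cite{DOW23}: keeping only the characters trivial on $\pm1$ halves the torus parameters to $(q\pm1)/2$. For $q$ odd, $\sqrt{\eta' q}$ comes from the characters of degree $(q\pm1)/2$ that survive in $\PSL_2(q)$.
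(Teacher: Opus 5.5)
Your outline follows the paper's structure: determinant-corrected witnesses for $\Q(\GL^\epsilon_{n-1}(q))$, a Coxeter-torus element for $K$, \cite[Theorem 6.1]{SV19} for $\sqrt{\eta q}$, and the determinant condition on Coxeter-torus elements for the upper bound. However, your first step has a genuine gap, and it sits exactly where the hypothesis on $(n,q,\epsilon)$ is needed.

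\textbf{The inverting automorphism.} Since $\xi_{q-\epsilon}=\gamma_{\epsilon,q}(1)\in\Q(\GL^\epsilon_{n-1}(q))$, you must detect every $\sigma$ with $\sigma(\xi_{q-\epsilon})=\xi_{q-\epsilon}^{-1}$ (e.g.\ complex conjugation) when $q-\epsilon\ge 3$. Lemma \ref{ones} does not apply to such $\sigma$. Your witness $\mathrm{diag}(\zeta_{q-\epsilon},\zeta_{q-\epsilon}^{-1},1,\dots,1)$ is sent by $\sigma$ to a conjugate of itself, so it only yields $\xi_{q-\epsilon}+\xi_{q-\epsilon}^{-1}$.

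The paper adds $t=\mathrm{diag}(\zeta_{q-\epsilon},\zeta_{q-\epsilon},\zeta_{q-\epsilon}^{-2},1,\dots,1)$. It shows that $\sigma(t)$ is not conjugate to $zt$ for any scalar $z$, and then gets $\xi_{q-\epsilon}\in\Q(S)$ from the index-$2$ statement in Lemma \ref{NumberTheoryLemma}. Only afterwards does it use the determinant-corrected elements (your $\mathrm{diag}(s,\det s^{-1})$), and only for $\sigma\in\Stab_{\G}(\xi_{q-\epsilon})$.

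The ``care'' you postpone cannot be supplied uniformly. For the inverting $\sigma$:
\begin{itemize}
\item $t$ is scalar when $n=3$, $q-\epsilon=3$;
\item $\zeta_6^{2}\sigma(t)=t$ when $n=3$, $q-\epsilon=6$;
\item $-\sigma(t)$ is conjugate to $t$ when $n=4$, $q-\epsilon=4$;
\item $\zeta_5\sigma(t)$ is conjugate to $t$ when $n=5$, $q-\epsilon=5$.
\end{itemize}
These four situations are exactly the excluded triples other than $(4,3,1)$. For $(3,4,1)$ the inclusion is in fact false. We have $\xi_3\in\Q(\GL_2(4))$. But $\PSL_3(4)$ has element orders $1,2,3,4,5,7$ and a single class of elements of order $3$, so $\Q(\PSL_3(4))\subseteq\Q(\xi_{140})$, which does not contain $\sqrt{-3}$.

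\textbf{The fixed case is not automatic either.} Even for $\sigma\in\Stab_{\G}(\xi_{q-\epsilon})$, your twist $\hat s=\mathrm{diag}(s,\det s^{-1})$ can fail. If $s$ has exactly one eigenvalue $1$ and $\det s=-1$, the scalar $-1$ can make $\sigma(\hat s)$ conjugate to $-\hat s$. For example, take $q=3$, $\epsilon=1$, $s=\mathrm{diag}(\zeta_8,\zeta_8^{3},1)$ and $\sigma(\xi_8)=\xi_8^{5}$; this $\sigma$ is nontrivial on $\xi_8+\xi_8^{3}=\sqrt{-2}\in\Q(\GL_3(3))$. This is what forces the paper's separate treatment of $(q,\epsilon)=(3,1)$ and the exclusion of $(4,3,1)$.

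So your attribution of the exceptional list to missing Zsigmondy primes or vanishing orbit sums is wrong. For $(3,4,1)$, $7$ is a primitive prime divisor of $4^3-1$ and $K=\xi_7+\xi_7^{2}+\xi_7^{4}\neq 0$.

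\textbf{A smaller slip in your second step.} The orbit of $\zeta_{a/b}$ already has determinant $\zeta_{a/b}^{\pm a}=1$, so no scalar adjustment is needed. But Lemma \ref{neweigenvalues}, with $a/b$ in place of $a$, then controls only the orbit of $\xi_{(a/b)/(a/b,q-\epsilon)}$. The gcd $(a/b,q-\epsilon)$ can exceed $1$: for $(n,q,\epsilon)=(9,4,1)$ one has $a/b=3\cdot 7\cdot 19\cdot 73$ and $(a/b,q-\epsilon)=3$. So you obtain only a subfield of $\Q(K)$, in general a proper one. Take the orbit of $\zeta_a$ instead, as the paper does: its determinant is $\zeta_a^{\pm a}=1$, and Lemma \ref{neweigenvalues} gives exactly the orbit of $\xi_{a/b}$.
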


\begin{proof}
    Let $G = \SL^{\e}_n(q),\tilde{G} = \GL^{\e}_n(q)$ and $S = \PSL^{\e}_n(q)$. The statement for the $n=2$ case can easily be verified from the known generic character tables, so we need only consider the  case where $n > 2$ and we begin by showing 
   \[\Q(\PSL^{\epsilon}_n(q)) 
   \supseteq  \begin{cases} 
      \Q(\GL^{\epsilon}_{n-1}(q))(K,\sqrt{\eta q}) & q \text{ odd, } q \text{ not a square, } 2\le n_2\le (q-\epsilon)_2\\
      \Q(\GL^{\epsilon}_{n-1}(q))(K) & \text{otherwise}. \\
   \end{cases}
\]  If we assume that $q$ is odd, $q$ is not a square, and $2 \le n_2 \le (q-\epsilon)_2$, then,  using \cite[Theorem 6.1 and Remark 2.2]{SV19} and Corollary \ref{center}, if there exists a semisimple element $s 
\in \GL^{\epsilon}_n(q)$ such that $\operatorname{diag}(\zeta_{n_2})s$  is conjugate to $s,$ $s \in G,$ and $\chi\in \mathcal{E}(G,s)$, then $\sqrt{\eta q}$ lies in the field of values of the deflation of a constituent of $\chi|_G$ and $\sqrt{\eta q} \in \Q(S)$. Write $n_2 = 2^k$. Then $2^{k+1}$ divides $q^2-1$, if $s$ is the semisimple elements whose eigenvalues are all of the primitive $2^{k+1}$th roots of unity, all with equal multiplicity, then $s$ meets all the described criteria.

    We now show that  $\xi_{q-\epsilon} \in \Q(S)$ (note that the cases where $q-\e < 3$ are trivial so we my assume this is not the case). First consider the semisimple elements $s,t$ in $\tilde{G}$ with non-identity eigenvalues $(\zeta_{q-\epsilon},\zeta_{q-\epsilon}^{-1})$ and $(\zeta_{q-\epsilon},\zeta_{q-\epsilon},\zeta_{q-\epsilon}^{-2})$ respectively. Let $\pi:\GL^{\e}_n(q) \rightarrow \PGL^{\e}_n(q)$ be the canonical projection map and let $\chi_{\pi(s)} \in \mathcal{E}(G,\pi(s))$, $\chi_s \in \mathcal{E}(\tilde{G},s)$ and $\chi_{\pi(t)} \in \mathcal{E}(G,\pi(t))$ be semisimple characters. By Corollary \ref{center} these characters are trivial on the center, so it is sufficient  to show $\xi_{q-\epsilon} \in \Q(\chi_{\pi(s)},\chi_{\pi(t)})$.  To show this we first want to show $\xi_{q-\epsilon} + \xi_{q-\epsilon}^{-1}\in \Q(\chi_{\pi(s)})  $.  Since $\chi_{\pi(s)}$ is a constituent of $(\chi_s)_G$, it is immediate that $(\Q(\chi_s)_G) \subseteq \Q(\chi_{\pi(s)})$.  Using Remark \ref{tftgt} it is immediate that $\xi_{q-\epsilon} + \xi_{q-\epsilon}^{-1}\in \Q(\chi_s),$  so it is sufficient to show that $\Q(\chi_s) \subseteq \Q((\chi_s)_G)$ or equivalently to show that any Galois automorphism which fixes $(\chi_s)_G$ must also fix $\chi_s$. Let $\sigma \in \G$ such that $\sigma$ fixes $(\chi_s)_G$ up to conjugation. Then $\sigma$ must fix the series $\mathcal{E}(G,\pi(s)),$ since all the constituents of $\chi_G$ lie in this series. So in particular $\sigma$ fixes $\pi(s)$ up to conjugation. Thus there exists  $z= \operatorname{diag}(a,...,a) \in \mathbf{Z}(\GL_n(q))$, with $a \in  \GL^\e_1(q)$, such that $\sigma(s)z $ is conjugate to $s$. Then, since $n > 2$ we have $1$ is an eigenvalue of $s$ and it follows that $a \in \{\zeta_{q-\epsilon},\zeta_{q-\epsilon}^{-1},1\}$.  We  also see that  $a\sigma(\zeta_{q-\epsilon}),$ $a\sigma(\zeta_{q-\epsilon})^{-1},$ or $a$ must equal $1$. These two statements together imply that $\sigma(\zeta_{q-\epsilon}) \in \{\zeta_{q-\epsilon},\zeta_{q-\epsilon}^{-1}\}$ and $\sigma(s)$ is conjugate to $s$. Thus, $\sigma$ fixes $\chi_s$ as desired.

  We now consider $\Q(\chi_{\pi(t)})$. If $\sigma \in \G$ such that $\sigma(\xi_{q-\epsilon}) = \xi_{q-\epsilon}^{-1}$, then clearly $\sigma(\pi(s)) = \pi(s)$  and a straightforward computation similar to the computation in the preceding paragraph gives that, since that $S$ is not one of the listed exceptions,  $\sigma(\pi(t) )\neq \pi(t)$ up to conjugation. Then, $\Q(\chi_{\pi(t)})$ is not contained in $\Q(\xi_{q-\e}+ \xi_{q-\e}^{-1})$. Since $\Q(\xi_{q-\e}+\xi_{q-\e}^{-1})$ is an index 2 subfield of $\Q(\xi_{q-\epsilon})$ by \ref{NumberTheoryLemma}, this implies $\Q(\xi_{q-\e}) \subseteq \Q(\chi_{\pi(s)},\chi_{\pi(t)})\subseteq  \Q(S)$.

    We now want to show that $\Q(\GL^\e_{(n-3)}(q)) \subseteq \Q(\PSL^\e_n(q))$ when $n > 3$. The case of $n = 4 $ follows from the above so assume $n \ge 5$. Consider semisimple elements $s_2,...,s_{(n-3)} \in \GL^\e_n(q)$ where  the complete collection of  non-identity eigenvalues of $s_i$  is $\zeta_{q^i-\e^i},\zeta_{q^{i}-\e^i}^{\e q},...,\zeta_{q^i-\e^i}^{(\e q)^{i-1}},$ $ \zeta_{q^i-\e^i}^{-(1+\e q+...+(\e q)^{i-1})}$. Note that each $s_i$ has determinant 1.  Let $\pi$ be as in the above paragraph and let  $\chi_i \in \mathcal{E}(\tilde G,s_i)$ be a semisimple character for each $2\le i \le n-3$. Note that Corollary \ref{center} gives that each of these characters is trivial on the center. It is then sufficient to show $\Q(\xi_{q-\e},(\chi_2)_G,...,(\chi_n)_G) \supseteq \Q(\GL^{\e}_{n-3}(q))$. From the fundamental theorem of Galois theory it is sufficient to show that for all $\sigma \in \G$, if $\sigma$ does not fix $\Q(\GL^{\e}_{n-3}(q))$, then $\sigma$ does not fix one of the elements of   $\{\xi_{q-\epsilon},(\chi_2)_G,...( \chi_n)_G\}$. Further, by above, we need only consider  the case where $\sigma \in \Stab_\G(\xi_{q-\epsilon})$. We see that for each $s_i$ the eigenvalue $1$ has multiplicity at least $2$ and the eigenvalue $\zeta_{q^i-\e^i}^{-(1+\e q+...+(\e q)^{i-1})}  = \zeta^{-1}_{q-\e}$ has multiplicity $1$. These are the only $2$ eigenvalues which lie in the subgroup of  $\mathbb{F}_{q^2}^\times$ of order $q-\e $. Thus, if $\sigma \in \Stab_\mathcal{G}(\xi_{q-\e})$ and  $z:= \operatorname{diag}(a,...,a) \in\mathbf{Z}(\tilde G)$, then the only eigenvalues of $\sigma(s_i)z$ which lie in $\Q(\xi_{q-\epsilon})$ will be $a\zeta_{q-\epsilon}^{-1}$ with multiplicity $1$ and $a$ with multiplicity  at least $2$. It follows that $\sigma(s_i)z$ cannot be conjugate to $s_i$ unless $z = 1$ and $\sigma(s_i)$ is conjugate to $s_i$. If $\sigma \in \Stab_\mathcal{G}(\xi_{q-\e})$  fixes $(\chi_{s_i})_G$, then it must fix $\E(G,\pi(s_i)),$ since all of its constituents lie in this series. Thus, $\sigma$ must fix $\pi(s_i)$ and by the above argument must fix $s_i$. The first paragraph of the proof of Theorem \ref{main1} argues that $\Q(\GL_{n-3}(q)) = \Q(\xi_{q-\e},\chi_2,...,\chi_n)$, so this gives that $\sigma$ fixes $\Q(\GL_{n-3}(q))$, which is equivalent to what was to be shown.

   Now we show $\Q(\GL_{(n-2)}) \subseteq \Q(\PSL_n(q))$.  Let $s$ be semisimple in $\GL_n(q)$ with non-identity eigenvalues $\zeta_{q^{n-2}-\e^{n-2}},\zeta_{q^{n-2}-\e^{n-2}}^{\e q},...,\zeta_{q^{n-2}-\e^{n-2}}^{(\e q)^{n-3}}, \zeta_{q^{n-2}-\e^{n-2}}^{-(1+\e q+...+(\e q)^{n-3})}$, and let $\chi \in \mathcal{E}(\GL_n^\e(q),s)$ be semisimple.  Then we see that the only eigenvalues of $s$ that lie in  the subgroup of  $\mathbb{F}_{q^2}^\times$ of order $q-\e $ are $1$ and $\zeta_{q^{n-2}-\e^{n-2}}^{-(1+\e q+...+(\e q)^{n-3})} = \zeta_{q-\e}^{-1} $, each with multiplicity $1$.  We see that for $\sigma \in \operatorname{Stab}_\G(\zeta_{q-\e})$ we have that $\sigma(s_i) = zs_i$ for $z = \operatorname{diag}(a,a,...a) , a\neq 1$ if and only if $a =  \zeta_{q-\e}^{-1}$  and $a \sigma(\zeta_{q-\e}^{-1})=  \zeta_{q-\e}^{-1}a = a^2 = 1$. Thus, if $\zeta_{q-\e}^{-1}\neq -1$ we see that $\pi(s)$ being conjugate to $ \sigma(\pi(s))$ implies that $s$ is conjugate to $\sigma(s).$ Then we can argue as in the above case to obtain the result. Thus, the only case that we need to consider is when $(q,\e) = (3,1)$. Since we have assumed $S \neq \PSL_4(3)$ and $\PSL_3(3)$, the previous paragraph allows us to assume that  $\sigma$ acts trivially on $\Q(\xi_8+\xi_8^{3})$, since we already know $\xi_8 + \xi_8^{-3} \in \GL_{n-3}(q) \in \Q(S)$. We then take the semisimple element in $\GL_n(q)$ with eigenvalues 
$\zeta_{q^{n-2}-1},\zeta_{q^{n-2}-1}^q,...,\zeta_{q^{n-2}-1}^{q^{n-3}}, \zeta_8,\zeta_8^3$. Then a similar argument to the case of $q \neq 3$ gives the claim.

   Now we show  $\Q(\GL^{\e}_{(n-1)}) \subseteq \Q(\PSL^{\e}_n(q))$. We consider semisimple element $s \in \GL^{\e}_n(q)$ with non-identity eigenvalues $\zeta_{q^{n-1}-\e^{n-1}},\zeta_{q^{n-1}-\e^{n-1}}^{\e q},...,\zeta_{ q^{n-1}-\e}^{(\e q)^{n-1}}, \zeta_{q^{n-1}-\e^{n-1}}^{-(1+\e q+...+(\e q)^{n-2})}$. Then we see that the only eigenvalue of $s$ that lies in the subgroup of  $\mathbb{F}_{q^2}^\times$ of order $q-\e $  is $\zeta_{q^{n-1}-\e^{n-1}}^{-(1+\e q+...+\e q^{n-3})} = \zeta_{q-\e}^{-1}$. Thus, we see that for $\sigma \in \operatorname{Stab}_\G(\zeta_{q-\e})$ we have that $\sigma(s_i) \neq zs_i$ up to conjugation for $z = \operatorname{diag}(a,a,...a) $ with $a \neq 1$. Thus, $\pi(s)$ being conjugate to $ \sigma(\pi(s))$ implies that $s$ is conjugate to $\sigma(s).$ Then arguing as exactly as in the argument used to show $\Q(\GL^\e_{(n-3)}(q)) \subseteq \Q(\PSL^\e_n(q))$ gives the claim.

To finish the inclusion we need to show that $K \in \Q(S)$. Let $a = (q^n-\e^n)/(q-\e)$ and let $s$ be a semisimple element with eigenvalues $\zeta_a,\zeta_a^{\e q},...,\zeta_a^{(\e q)^{n-1}}$. Then, if $\chi \in \mathcal{E}(\tilde G,s)$ is semisimple, then Lemma \ref{neweigenvalues} and Corollary \ref{center} imply $K \in \Q(\chi_G)\subseteq \Q(S)$.

Now we show the other inclusion. Arguing exactly as in the last paragraph of the proof of Proposition \ref{SL} shows that it is sufficient to consider the deflations of characters in series indexed by the images in $\PGL_n^\e(q)$ of semisimple elements of $s \in\tilde{G}$ with  eigenvalues of the form $(\zeta,\zeta^{\epsilon q},...,\zeta^{(\epsilon q)^{n-1}})$ such that $|\zeta|=c$, where  $c|(q^n-\epsilon^n)$  and $c\nmid (q^k-\e^k)$ for all $1\le k <n$.
 Using \ref{center} we need only consider those with $s \in \SL_n(q)$. This is true if and only if $c |(q^n-\e^n)/(q-\e)$. Then Lemma using \ref{neweigenvalues} and arguing as in the proof of Proposition \ref{SL} gives that adjoining the restrictions of all such characters to $\Q$ gives $\Q(K)$ which gives the containment.
\end{proof}

Propositions \ref{SL} and \ref{PSL} give Theorem \ref{main}. 
        
    \end{section}

\section{Small degree extensions}
In this section we classify all degree-$2$ and degree-$3$ extensions of $q$ which occur as $\Q(S)$ for some simple group $S$.

\begin{proposition}\label{Alternating}
    The only degree-$2$ or degree-$3$ extensions of $\Q$ of the form $\Q(A_n)$ for $n \ge 5 $ are $\Q(\sqrt{5}),\Q(\sqrt{-7}),\Q(\sqrt{-15})$ and $\Q(\sqrt{21}).$
\end{proposition}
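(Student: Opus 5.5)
Since $\Q(A_n)$ is generated by square roots, I will use the explicit description of $\Q(A_n)$ due to Robinson and Thompson \cite{RT95} and reduce the statement to a finite check plus a counting argument for large $n$.

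First I recall the relevant form of their result. Write $p^* := (-1)^{(p-1)/2}p$ for an odd prime $p$. Then $\Q(A_n)$ is the multiquadratic field generated by finitely many square roots, namely:
\begin{enumerate}
\item the $\sqrt{p^*}$ for odd primes $p \le n$, excluding a small explicitly described set of primes that depend on $n$ (essentially those $p$ for which every cycle type with a $p$-part is split in a way forced by $n-p$ being small);
\item square roots of products $\sqrt{p_1^* p_2^* \cdots}$ arising from split classes whose cycle type has distinct odd parts, as in the classical description of split classes of $A_n$.
\end{enumerate}
I will not re-derive this; I will quote the precise list from \cite{RT95}. The only feature needed is that $\Q(A_n)$ is a compositum of quadratic fields, so $|\Q(A_n):\Q|$ is a power of $2$.

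Degree $3$ is ruled out immediately, since $\Q(A_n)$ is multiquadratic. For degree $2$ I need exactly one quadratic generator up to squares. The key step is a lower bound: for $n$ large enough, I will exhibit two split classes of $A_n$ whose irrationalities are multiplicatively independent modulo squares. For example, take the cycle type $(p, 1^{n-p})$ with $p \equiv n$ or $n-1$ suitable, or the types with two distinct odd cycles $(p_1,p_2,1^{\dots})$ completed by fixed points or a small odd cycle so that the parts are distinct and odd and sum to $n$. Using Bertrand-type estimates for primes in $(n/2,n]$, together with the explicit statement of \cite{RT95}, I will show that for $n \ge 10$ (or a similarly small bound) there are two different odd primes $p \ne p'$ with $\sqrt{p^*},\sqrt{p'^*} \in \Q(A_n)$. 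Hence $|\Q(A_n):\Q| \ge 4$ for such $n$.

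For the remaining small $n$ I will compute $\Q(A_n)$ directly from the split classes, whose values are $\tfrac{1}{2}\bigl(\pm 1 \pm \sqrt{\prod \lambda_i^*}\bigr)$ for cycle types $\lambda$ with distinct odd parts:
\begin{enumerate}
\item $A_5$ and $A_6$: the only split classes are the $5$-cycle types, so $\Q(A_5)=\Q(A_6)=\Q(\sqrt{5})$.
\item $A_7$: the split classes are the $7$-cycles, giving $\Q(\sqrt{-7})$; note $(5,1,1)$ does not split since its parts are not distinct.
\item $A_8$: the split class is $(5,3)$, giving $\Q(\sqrt{-15})$.
\item $A_9$: the split classes are $(9)$, which gives the rational square $\sqrt{9}$, and $(7,1,1)$, which is not distinct. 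The relevant one is $(5,3,1)$, giving $\sqrt{-15}$. I will check this against \cite{RT95}, since the statement lists $\Q(\sqrt{21})$, presumably coming from $(7,3,\dots)$ at $n=10$ or from $(7,3,1)$ at $n=11$.
\end{enumerate}
I will carry out this table carefully up to the bound from the previous step, recording each field, and confirm that exactly the four fields $\Q(\sqrt5)$, $\Q(\sqrt{-7})$, $\Q(\sqrt{-15})$ and $\Q(\sqrt{21})$ appear with degree $2$.

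The main obstacle is making the large-$n$ bound airtight: I must read off the precise exceptional set in the Robinson–Thompson theorem and guarantee two independent square classes for all $n$ beyond an explicit small threshold. I will first try choosing $p$ to be the largest prime $\le n$ and $p'$ the next one, and use the partitions $(p,\dots)$ and $(p',\dots)$ with distinct odd parts. Any finitely many leftover $n$ will be absorbed into the direct computation.
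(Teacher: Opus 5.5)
Your strategy works and differs from the paper's, but one of your sample computations is wrong and the large-$n$ step needs tightening.

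\textbf{Comparison with the paper.} The paper uses Robinson--Thompson only for $n>24$, where it directly gives $\sqrt{-3},\sqrt{5}\in\Q(A_n)$. It reads off $n\le 14$ from GAP. For $15\le n\le 24$ it uses the split-class values $\frac{1}{2}(\epsilon+\sqrt{\epsilon h_1\cdots h_k})$, $\epsilon=(-1)^{(n-k)/2}$, with two hand-picked partitions such as $(n)$ and $(1,3,n-4)$. You instead dispose of degree $3$ structurally: a multiquadratic field has $2$-power degree. You then work with the split-class values for every $n$, by a hand table for small $n$ and a primes-in-$(n/2,n]$ argument for large $n$. This avoids the full Robinson--Thompson theorem and machine computation. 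The price is a prime-counting estimate and a careful small table.

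\textbf{Error in the small-$n$ table.} Your entry for $A_8$ is wrong. The partition $(7,1)$ has distinct odd parts, so the $7$-cycles split with values $\frac12(-1\pm\sqrt{-7})$. Hence $\Q(A_8)=\Q(\sqrt{-7},\sqrt{-15})$ has degree $4$. The field $\Q(\sqrt{-15})$ occurs instead as $\Q(A_9)$, whose split classes are $(9)$, giving $\sqrt{9}=3$, and $(5,3,1)$. The field $\Q(\sqrt{21})$ occurs as $\Q(A_{10})$, whose split classes are $(9,1)$ (rational) and $(7,3)$. By contrast, $\Q(A_{11})=\Q(\sqrt{-11},\sqrt{21})$ has degree $4$. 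The final list survives, but your threshold ``$n\ge 10$'' for the large-$n$ step is false; it must start at $11$.

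\textbf{Fixing the large-$n$ step.} A partition $(p,\dots)$ yields $\sqrt{p^*\prod h_i^*}$, not $\sqrt{p^*}$. Below $25$ the Robinson--Thompson field is not ``all $\sqrt{p^*}$ up to a few exceptions''; for example, $\Q(A_{10})$ contains none of $\sqrt{-3},\sqrt5,\sqrt{-7}$. So argue directly.
\begin{itemize}
\item For an odd prime $p>n/2$, a partition of $n$ into distinct odd parts containing $p$ exists if and only if $p\neq n-2$. Take $(p)$, $(p,n-p)$ or $(p,n-p-1,1)$ according to the parity of $n-p$. All other parts are then $<n/2$.
\item For two such primes $p>p'>n/2$, the two radicands have $p$-adic valuations $1$ and $0$, and the second has $p'$-adic valuation $1$. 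So they are independent modulo squares and the degree is at least $4$.
\item Ramanujan's bound $\pi(n)-\pi(n/2)\ge 3$ for $n\ge 17$ supplies two admissible primes.
\item For $11\le n\le 16$, check by hand. Your ``largest prime and the next one'' fails at $n=13$, since $11=n-2$; use $13$ and $7$ instead. At $n=15$ only $11$ is admissible; use $(15)$ and $(11,3,1)$, giving $\sqrt{-15}$ and $\sqrt{33}$.
\end{itemize}
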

\begin{proof}
From the main theorem of \cite{RT95} we have that for $n > 24$ 
$$
\Q(A_n) = \Q(\sqrt{p^*} : p \text{ an odd prime, }  p \le n , p \neq n-2) 
$$ where $p^* = -p$ if $p  \equiv 3 ( \operatorname{mod} 4) $  and $p^{*} = p $ otherwise. So in particular for $n > 24$ we have $\Q(\sqrt{-3},\sqrt{5}) \subseteq \Q(A_n)$, so we need not consider these cases.

Character tables for $A_n$ with $n\le 14$ can be found in \cite{GAP}, which gives all the stated fields.  So it is sufficient so show that  $3 < |\Q(A_n):\Q| $ for  $15 \le n \le  24 $. Using \cite[Proposition 2.5.13]{JK} we see that for every partition $n = h_1+...+h_k$ such that each $h_i$ is odd and $h_1<h_2...<h_k$ we can find the value

$$
\frac{(-1)^{(n-k)/2} + \sqrt{(-1)^{(n-k)/2}\cdot h_1\cdot h_2\cdot...\cdot h_k}}{2}
$$
in the character table of $A_n$. If $n$ is odd then the partitions $(n)$ and $(1,3,n-4)$ correspond to values which give a degree-$4$ extension. If $n$ is even and not equal to 24 then  $(1,(n-1))$ and $(1,3,5, (n-9))$ are sufficient. If $n=24$ we take $(1,23)$ and $(1,3,7,13)$.
    \end{proof}

    Next we consider $\GL^{\epsilon}_n(q)$, which will be useful for considering simple groups.

    \begin{proposition}\label{GL23}
    Let $q$ be a power of a prime and $n\ge 2$ an integer. Then the following hold:
    \begin{enumerate}
   \item The only degree-$2$ or degree-$3$ extensions of $\Q$ of the form $\Q(\GL^{\e}_n(q))$ are $\Q(\sqrt{-7}),\Q(\sqrt{-3}),$ and $\Q(\sqrt{-2})$ 
   \item $|\Q(\GL_n^\e(q)):\Q| \le 3$ if and only if $(q,n) \in \{(2,2),(2,3),(3,2) \}$ or $\e = -1$ and $(n,q) = (4,2)$.
    \end{enumerate}
    \end{proposition}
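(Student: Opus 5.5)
The plan is to reduce everything to Theorem \ref{main1}, which gives $\Q(\GL^\e_n(q)) = \Q(\{\gamma_{\e,q}(d): 1\le d\le n\})$, and then bound the degrees of the individual generators. By Proposition \ref{TypeAinclusions}(i) these fields increase with $n$. Hence $|\Q(\GL^\e_n(q)):\Q|\le 3$ forces $|\Q(\gamma_{\e,q}(d)):\Q|\le 3$ for every $d\le n$, and each of these subfields must have degree $1$ or equal the whole field. Outside the exceptional triples $(d,q,\e)\in\{(2,3,-1),(3,2,-1)\}$, Lemma \ref{numbertheorylemmabig} (and Lemma \ref{NumberTheoryLemma} when $d=2$) identifies $\Q(\gamma_{\e,q}(d))$ with the fixed field of $\langle \e q\rangle \le (\Z/(q^d-\e^d)\Z)^\times$. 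This subgroup has order exactly $d$, since $\e q$ has order $d$ modulo $q^d-\e^d$. The key numerical fact is therefore
$$|\Q(\gamma_{\e,q}(d)):\Q| = \varphi(q^d-\e^d)/d .$$

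First I would bound $q$ using $d=2$, which is available because $n\ge 2$. Here $q^2-\e^2=q^2-1$ for both signs, so unless $(q,\e)=(3,-1)$ we need $\varphi(q^2-1)\le 6$. This leaves only $q\in\{2,3\}$: for $q\ge 4$ we have $\varphi(15)=\varphi(24)=8$, and the totient only grows from there (a quick elementary bound on $\varphi(q^2-1)$ for $q\ge 4$ suffices). For $(q,\e)=(3,-1)$ I would use instead the computation from the proof of Theorem \ref{main1}, namely $\gamma=0$ and the relevant stabilizer corresponds to $\Q(i)$. I would also check $d=1$, where $\Q(\xi_{q-\e})$ is trivial or quadratic in these cases.

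Next I would bound $n$ for each of the four pairs $(q,\e)\in\{(2,\pm1),(3,\pm1)\}$ by finding the first $d$ with $\varphi(q^d-\e^d)/d\ge 4$:
\begin{itemize}
\item for $q=2$, $\e=1$: $d=5$ gives $\varphi(31)/5=6$;
\item for $q=2$, $\e=-1$: $d=5$ gives $\varphi(33)/5=4$;
\item for $q=3$, $\e=1$: $d=3$ gives $\varphi(26)/3=4$;
\item for $q=3$, $\e=-1$: $d=3$ gives $\varphi(28)/3=4$.
\end{itemize}
This leaves finitely many cases, $n\le 4$ for $q=2$ and $n=2$ for $q=3$. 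In each of them I would explicitly compute the compositum of the fields $\Q(\gamma_{\e,q}(d))$, $d\le n$, as a fixed field inside a small cyclotomic field. This uses $\Q(\xi_3)=\Q(\sqrt{-3})$, the quadratic subfield $\Q(\sqrt{-7})$ of $\Q(\xi_7)$ fixed by $\langle 2\rangle$, and subfields of $\Q(\xi_8)$ and $\Q(\xi_{15})$. The expected outcomes are:
\begin{itemize}
\item $\GL_2(2)$ gives $\Q$;
\item $\GL_3(2)$ gives $\Q(\sqrt{-7})$;
\item $\GL_4(2)$ adds the quadratic field $\Q(\gamma_{1,2}(4))\subset\Q(\xi_{15})$, producing degree $4$, so it is excluded;
\item $\GU_2(2)$ and $\GU_3(2)$ give $\Q(\sqrt{-3})$;
\item $\GU_4(2)$ gives the fixed field of $\langle -2\rangle$ in $(\Z/15\Z)^\times$, which should be $\Q(\sqrt{-3})$;
\item $\GL_2(3)$ gives $\Q(\sqrt{-2})$;
\item $\GU_2(3)$ gives $\Q(i)$.
\end{itemize}

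The main obstacle is reconciling these explicit computations with the exact list in the statement. In particular, $\GU_2(3)$ appears to yield $\Q(i)$, which is not listed, and the statement as written lists $(q,n)=(3,2)$ without specifying $\e$. I would therefore do the small cases $n\le 4$, $q\in\{2,3\}$ very carefully, directly from Remark \ref{tftgt} via stabilizers of eigenvalue multisets rather than via the $\gamma$'s. Those small cases are where the generic formula fails, and whether each resulting field has degree $\le 3$ must be settled by hand.
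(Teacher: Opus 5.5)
Your route is the paper's. You apply Theorem \ref{main1}, use the generators with $d\le 2$ to force $q\in\{2,3\}$, bound $n$ with one larger Gaussian period, and compute the finitely many remaining fields by hand. The paper first uses $\varphi(q-\e)\le 2$ and then Lemma \ref{NumberTheoryLemma}; you go to $d=2$ directly, which is the same thing. All of your small-case outcomes agree with the paper's computations.

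The discrepancy you flagged is real, and it is an error in the statement, not in your argument. Since $\gamma_{-1,3}(1)=\xi_4$ and $\gamma_{-1,3}(2)=\xi_8+\xi_8^{5}=0$, one gets $\Q(\GU_2(3))=\Q(\sqrt{-1})$. The paper's own proof records exactly this. So part (1) as written cannot be proved, and the correct list is $\Q(\sqrt{-1}),\Q(\sqrt{-2}),\Q(\sqrt{-3}),\Q(\sqrt{-7})$. Part (2), with $(q,n)=(3,2)$ allowed for both signs, is correct, and it is precisely what your case analysis establishes. The later use of the proposition, in Proposition \ref{PSLsmallfields}, only needs (2).

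There is one small slip to fix. You assert that $\e q$ has order exactly $d$ modulo $q^d-\e^d$, but this fails for $(d,q,\e)=(2,2,-1)$. There $-2\equiv 1\pmod 3$, so $\gamma_{-1,2}(2)=2\xi_3$ generates a field of degree $2$, not $\varphi(3)/2=1$. This is the only case with $d\ge 2$ where it fails, and Lemma \ref{NumberTheoryLemma} is off there in the same way. It is harmless, because $q=2$ survives your bound on $q$ either way and you compute $\GU_2(2)$ explicitly.
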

\begin{proof}
First we show $(1)$. By Theorem \ref{main1} we have
    in all cases that $\Q(\xi_{q-e}) \subseteq \Q(\GL_n(q))$, so we need not consider cases where $\varphi(q-\e)  = |\Q(\xi_{q-\epsilon}):\Q|\ge 4$. Then using that fact that $\phi(n) \ge \sqrt{n/2}$ leaves only finitely many cases to check. We then see that  the only possibilities are $q = 2,3,4,5$ and $7$ when $\e = 1$ and $q = 2,3$ or $5$ when $\epsilon = -1$. Further in all cases we have $\Q(\xi_{q^2-1} + \xi_{q^2-1}^{\epsilon q} ) \subseteq  \Q(\GL^{\epsilon}_n(q))$ by Theorem \ref{main1}.   Applying Lemma \ref{NumberTheoryLemma} it is easy to show that  only cases in which we could possibly have $|\Q(\xi_{q^2-1} + \xi_{q^2-1}^{\epsilon q} , \xi_{q-\e}):\Q| < 3$ are when $q = 2$ or $q =3$. Note that, using the above argument, we have $\Q(\GU_2(2))  = \Q(\sqrt{-3})$ and $\Q(\sqrt{-3}) \subseteq \Q(\GU_n(2))$ for $n >3$.  Since any proper extension $\Q(\sqrt{-3})$ will have index at least $4$ over $\Q$ so  we need not consider these cases (note that  if  $n = 3$ we get $\Q(\GL_n(q)) = \Q(\sqrt{-3},\xi_9 + \xi_9^{7} + \xi_9^{4}) $ and $ \xi_9 + \xi_9^{7} + \xi_9^{4} = \xi_9(1 +\xi_3 + \xi_3^2) = 0,$ so $\Q(\GL_3(2))$ is also equal to $\Q(\sqrt{-3})$). Similarly, since $\Q(\xi_8 +\xi_8^{-3}) = \Q$, we see that $\Q(\GU_2(3)) = \Q(\sqrt{-1})$ and $\Q(\sqrt{-1}) \subseteq \Q(\GU_n(3))$ for $n >3$, so  we need not consider these cases. Again, arguing as above, we have $\Q(\GL_2(2)) = \Q$, so assume $n\ge 3$. Then $\Q(\xi_7 +\xi_7^2 + \xi_7^4) = \Q(\sqrt{-7}) \subseteq \Q(\GL^{\epsilon}_n(2)) $ and in particular $\Q(\GL_3(2)) = \Q(\sqrt{-7})$. Lastly, we see that $\Q(\GL_2(3)) = \Q(\xi_8+\xi_8^3) =\Q(\sqrt{-2}) \subseteq \Q(\GL_n(3))$. This gives $(1)$.

    Now we show $(2)$. By the proceeding paragraph we see that $|\Q(\GL_n^\e(q)):\Q|>3$ when $q\notin \{2,3\}$, so we need only consider the case  where $q \in \{2,3\}$. The previous paragraph also gives that if $(q,n) \in \{(2,2),(2,3),(3,2)\}$, then $|\Q(\GL_n^\e(q):\Q|\le 3,$.  First let $q = 3$. Then we may assume $n \ge 3$. If $\e = 1$ we have $\Q(\xi_{13}+\xi_{13}^3+\xi_{13}^9,\sqrt{-2}) \in \Q(\GL^{\e}_n(q)) $, and it is immediate that this field has sufficiently large degree over $\Q$.  If instead $\e = -1$ we have $\Q(\sqrt{-1},\xi_{28}+ \xi_{28}^{25} + \xi_{28}^{9}) \subseteq \Q(\GL_3^{\e}(3))$ and we claim that $\xi_{28}+ \xi_{28}^{25} + \xi_{28}^{19}$ has orbit of size $4$ under $\G$.  This follows from Lemma \ref{numbertheorylemmabig}.

    Now for the case where $q = 2$ and $n \ge 4$ we see that, if $\epsilon = 1$, then $\xi_{15} + \xi_{15}^4 + \xi_{15}^2 + \chi_{15}^8$ is contained in $\Q(\GL_n(q))$. We note that 
    $$
    \xi_{15} + \xi_{15}^4 + \xi_{15}^2 + \xi_{15}^8 = \xi_3(\xi_5 + \xi_5^4)+ \xi_3^2(\xi_5^2 + \xi_5^3 ) = 2\sqrt{-15}.
    $$ Thus, since $\sqrt{-7} \in \Q(\GL_n(q))$ as well, we see $|\Q(\GL_n(q)):\Q| \ge 4$. Lastly we consider when $\epsilon = -1$. Since $\xi_{15} + \xi_{15}^{-2} + \xi_{15}^4 + \chi_{15}^{-8} = -\sqrt{-3}$  we see that $\Q(\GU_4(2)) = \Q(\sqrt{-3})$. If $n>4$ we get  $\xi_{33}+\xi_{33}^{-2}+\xi_{33}^{4}+\xi_{33}^{-8}+\xi_{33}^{16}$ is contained in $\Q(\GU_n(q))$ and $|\Q(\GL_n(q)):\Q| \ge |\Q(\xi_{33}+\xi_{33}^{-2}+\xi_{33}^{4}+\xi_{33}^{-8}+\xi_{33}^{16}):\Q| =  4$ by Lemma \ref{numbertheorylemmabig}. 
\end{proof}

\begin{proposition}\label{PSLsmallfields}
    Let $q$  be a power  of a prime and $n \ge 2$. Then the only degree $2$ or $3$ extensions of $\Q$ of the form  $\Q(\PSL^{\epsilon}_n(q)) $ are $\Q(\sqrt{5}),\Q(\sqrt{-3})$ and $\Q(\sqrt{-7}).$
\end{proposition}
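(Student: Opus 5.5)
We split according to $n=2$ versus $n\ge 3$, and for $n\ge 3$ according to whether $(n,q,\e)$ is one of the eight exceptions listed in Theorem \ref{PSL}.

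\textbf{Generic case $n\ge 3$.} Theorem \ref{PSL} gives $\Q(\GL^{\e}_{n-1}(q))\subseteq\Q(\PSL^\e_n(q))$, with $n-1\ge 2$. Suppose $|\Q(\PSL^\e_n(q)):\Q|\le 3$. Then $|\Q(\GL^\e_{n-1}(q)):\Q|\le 3$, so by Proposition \ref{GL23}(2) we must have
$$(q,n-1)\in\{(2,2),(2,3),(3,2)\}\quad\text{or}\quad(\e,n-1,q)=(-1,4,2).$$
This leaves $\PSL^\e_3(2)$, $\PSL^\e_4(2)$, $\PSL^\e_3(3)$ and $\PSU_5(2)$. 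Several of these are excluded or isomorphic to smaller groups: $\PSU_3(2)$ is solvable, $\PSL_4(2)\cong A_8$, and $\PSU_4(2)\cong\PSp_4(3)$. For each remaining group we compute the extra generator $K=X_{a/b}$ from Theorem \ref{PSL}, together with $\sqrt{\eta q}$ where it applies, using Lemma \ref{numbertheorylemmabig} for the degree of $K$. For example:
\begin{itemize}
\item[] $\PSL_3(2)$: here $a=7$, $b=1$, and $K=\xi_7+\xi_7^2+\xi_7^4$, so $\Q(\PSL_3(2))=\Q(\sqrt{-7})$.
\item[] $\PSL_3(3)$: here $a=13$, $b=1$, and $K=\xi_{13}+\xi_{13}^3+\xi_{13}^9$ has degree $4$, so this group is ruled out.
\item[] $\PSU_4(2)$: we expect $\Q(\sqrt{-3})$.
\item[] $\PSU_5(2)$: we expect a field containing $\sqrt{-3}$ and an element of degree $>2$ coming from $a/b=11$.
\end{itemize}
Each case is a finite computation and could be cross-checked against the ATLAS.

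\textbf{Exceptional triples.} The eight excluded triples $(3,2,-1),\dots,(5,4,-1)$ are treated directly. Where Theorem \ref{PSL} does not apply, we first check whether $\Q(\GL^\e_{n-3}(q))$ or $\Q(\xi_{q-\e}+\xi_{q-\e}^{-1})$ already lies in $\Q(S)$ and is large; this follows from the arguments in the proof of Theorem \ref{PSL}, which only need weaker hypotheses. Otherwise we use known character tables, which are all in the ATLAS/GAP. For instance:
\begin{itemize}
\item[] $\PSL_3(4)$ has irrationalities $b7$ and $i$, giving degree $\ge 4$.
\item[] $\PSL_3(7)$ and $\PSU_3(5)$ have large fields.
\item[] $\PSU_4(3)$ contains $\sqrt{-3}$ and further irrationalities.
\end{itemize}
In each case we show the degree exceeds $3$ or find one of the three listed fields.

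\textbf{Case $n=2$.} By Theorem \ref{PSL}, $\Q(\PSL_2(q))$ contains $\Q(\xi_{m}+\xi_m^{-1})$ for $m=(q\pm1)/\gcd(2,q-1)$, and additionally $\sqrt{\eta' q}$ when $q$ is odd. Since $|\Q(\xi_m+\xi_m^{-1}):\Q|=\varphi(m)/2$ for $m\ge3$ (Lemma \ref{NumberTheoryLemma}), a degree $\le 3$ forces $\varphi(m)\le 6$ for both choices of $m$. This leaves a finite list of $q$, namely $q\le 13$ or so, to check by hand:
\begin{itemize}
\item[] $q=4,5$: we get $\Q(\sqrt5)$ (these are $A_5$).
\item[] $q=7$: we get $\Q(\sqrt{-7})$.
\item[] $q=9$: here $\sqrt{\eta'q}=3$, and we get $\Q(\sqrt5)$ since $\PSL_2(9)\cong A_6$.
\item[] $q=8$: we get $\Q(\xi_7+\xi_7^{-1},\xi_9+\xi_9^{-1})$, of degree $9$.
\item[] $q=11$: $\sqrt{-11}$ and $\sqrt5$ together give degree $4$.
\item[] $q=13$: $\sqrt{13}$ together with $\xi_7+\xi_7^{-1}$ (cubic) gives degree $6$.
\end{itemize}
The remaining prime powers are handled the same way. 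Here we must pay attention to the stated conclusion: $\Q(\sqrt{13})$, $\Q(\sqrt{17})$ and $\Q(\sqrt{-11})$ appear in Theorem \ref{SmallFields}, so they must come from other simple groups.

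\textbf{Main obstacle.} The main difficulty is bookkeeping in the small and exceptional cases. We need to confirm precisely which small $q$ give degree $\le 3$, handle the isomorphisms $\PSL_2(4)\cong\PSL_2(5)$ and $\PSL_2(9)\cong A_6$, and treat the eight exceptions of Theorem \ref{PSL} without the general formula. For these exceptions the first attempt will be to bound the degree from below using the $\GL^\e_{n-1}$ containment argument of Theorem \ref{PSL}, which appears to survive in most exceptional cases, and to fall back on explicit character tables only where it fails.
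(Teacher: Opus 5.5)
Your proposal is correct and follows essentially the same route as the paper's proof. For $n\ge 3$ you use $\Q(\GL^{\e}_{n-1}(q))\subseteq\Q(\PSL^{\e}_n(q))$ and Proposition \ref{GL23}(2) to reduce to a finite list evaluated with Theorem \ref{PSL}, you settle the eight exceptional triples from known character tables, and for $n=2$ the bound on $\varphi((q\pm1)/(2,q-1))$ reduces to finitely many $q$. A few of your illustrative computations are off, though none changes a conclusion: for $\PSU_5(2)$ the generator is $X_{11}=(-1+\sqrt{-11})/2$, of degree exactly $2$, so the field is $\Q(\sqrt{-3},\sqrt{-11})$; the irrationalities of $\PSL_3(4)$ are $\sqrt{5}$ and $\sqrt{-7}$ (no $i$); and for $n=2$ the finite list runs up to $q=19$, with $q=17$ and $q=19$ both giving degree at least $6$.
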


\begin{proof} Let $S = \PSL_n^\e(q)$.
Note that the exceptions listed in the statement of Proposition \ref{PSL} all have character tables which can be found in the character table libraries in \cite{GAP} and in all of those cases we have a field extension of degree at least $4$ of $\Q$, so we may assume that $S$ is not one of these groups. 
     Assume that $n \ge 3$. Then by Proposition \ref{PSL} we see that $\Q(\GL_{n-1}^{\epsilon}(q)) \subseteq \Q(\PSL_n(q))$, so by Proposition \ref{GL23} we need only check cases where $(q,n-1)  \in \{(2,2),(2,3),(3,2)
    \} $ or $(q,n-1,\e) = (2,4,-1)$. We then compute using Proposition \ref{PSL}  that in all cases we get an extension of degree at least $4$ over $\Q$ apart from the cases  $\Q(\PSL_3(2))$, $\Q(\operatorname{PSU}_3(2))$ and $\Q(\operatorname{PSU}_4(2))$ which have character fields $\Q(\sqrt{-7}),\Q$ and $\Q(\sqrt{-3})$ respectively. 

    Now assume $n =2 $. Then, by Proposition \ref{PSL} if we  let $\delta = (2,q)$, we have that that $\Q(\xi_{(q-1)/\delta}+\xi_{(q-1)/\delta}^{-1},\xi_{(q+1)/\delta}+\xi_{(q+1)/\delta}^{-1})\subseteq \Q(S)$. Further, Lemma \ref{NumberTheoryLemma}  gives us that $\Q(\xi_{(q-1)/\delta}+\xi_{(q-1)/\delta}^{-1}):\Q| = \varphi((q-1)/\delta)/2$ and $|\Q(\xi^{-1}_{(q+1)/\delta}+\xi_{(q+1)/\delta}^{-1}):\Q| = \varphi((q+1)/\delta)/2 $. Using this and known lower bounds on Euler's totient function reduces to cases which are available in  character table libraries in \cite{GAP}. We see that $\Q(\PSL_2(5)) = \Q(\sqrt{5})$ and all other degree-2 or degree-3 fields that occur are fields which have already been mentioned.
\end{proof}

\begin{proposition}\label{gapstuff2}
    Let $S$ be one of the simple groups $\PSp_4(4),$ $\PSp_6(3),$ $\operatorname{O}_7(3),$ $\operatorname{O}^{-}_8(2),$  $\operatorname{O}^{-}_{10}(2),$ $\operatorname{O}^{+}_{10}(2),$ $\operatorname{O}_{11}(2),$ $\operatorname{O}_{12}^+(2),$ $\operatorname{O}^{-}_{12}(2),$ $\operatorname{O}^{+}_{10}(2),$ or $\operatorname{O}_{13}(2)$, then $|\Q(S):\Q| \ge 4$. Further, we have the following:
    \begin{itemize}
    \item $ \Q(\PSp_6(2)) =\Q\operatorname{(O}_8^+(2))=  \Q,$
    \item $\Q(\PSp_8(2)) = \Q(\sqrt{17}),$ and
    \item $\Q(\operatorname{O}_8^+(3)) = \Q(\sqrt{13})$
    \end{itemize}

    \begin{proof}
        This follows from computation in \cite{GAP}.
    \end{proof}
    
\end{proposition}



We now prove Theorem \ref{SmallFields}.
\begin{proof}
Let $S$ be a non-abelian simple group. 
 Note that the case where $S$ is an alternating group was considered in \ref{Alternating}, and we can verify that $\Q(J_2) = \Q(\sqrt{5}),$ $\Q(M_{12}) = \Q(\sqrt{-11})$ and all other sporadics groups do not have character fields of degree $2$ or of degree $3$ by inspection of their character tables that can be found in \cite{GAP}, so we need only consider the case where $S$ is a group of Lie type. 
    Using \cite[Table 1]{Gec03} we see that for $S= \operatorname{E}_8(q)$ or $S = \operatorname{F}_4(q)$, the values of cuspidal unipotent characters give a sufficiently large extension, and if $S$ is of one the the types   $\operatorname{E_6},\operatorname{\,^2E_6}$ or $\operatorname{G}_2$ we see that $\sqrt{-3} \in \Q(S)$. Since Proposition \ref{PSLsmallfields} gives that $\Q(\sqrt{-3}) = \Q(\PSU_4(2))$ we need not consider these cases.  If $S = \operatorname{E_7}(q)$ for some $q$ we again use \cite[Table 1]{Gec03} to see that $\Q(\sqrt{-q}) \subseteq \Q(S)$. Then, \cite[Theorem 1.2]{Tre17}  gives that $S$ cannot have a character field of degree $2$ over $\Q$, so it must have degree at least $4$ and we need not consider it here. If $S$ is of one of the types $ \,^3\operatorname{D}_4$,$^2\operatorname{B}_2,$ or $^2\operatorname{G}_2$  we see via inspection of the known generic character tables that $|\Q(S):\Q| \ge 4$ in all cases. If $S$ is of type $ ^2\operatorname{F_2}$ then the descriptions of unipotent characters in \cite{Mal90} give that $|\Q(S):\Q| \ge 4$ in all cases.

    Now assume $S$ is a classical simple group of Lie type.   By Proposition \ref{PSLsmallfields} we need not consider the cases where $S$ is a group of type $\operatorname{A}$ or $\operatorname{^2A}$. We also need not consider the groups listed in the statement of \ref{gapstuff2}.  Then, we see that \cite[Theorem 1.2]{Tre17} gives that the remaining groups cannot be composition factors of groups with multi-quadratic fields of character values. Therefore, in all remaining cases it is sufficient to find a subfield of $\Q(S)$ degree $2$ over $\Q$.
    Let $G$ be one of the groups $\operatorname{Sp}_{2n}(q)$ for $n \ge 2$, $\operatorname{SO}_{2n+1}(q)$ for $n \ge 3$ or $\operatorname{SO}_{2n}^\e(q)$ for $n \ge 4$ and $\e \in \{\pm\}$. Let $\delta = 2$ in the case that $q $ is odd and $\delta = 1$ otherwise. In all cases we can find an element $s$ of the dual group of $G$ such that the only non-identity eigenvalues of $s$  are $\zeta_{q-1}^\delta$ and $\zeta_{q-1}^{-\delta}$, each with multiplicity $2$. We can also find a semisimple element $t$ such that  the only non-identity  eigenvalues of $t$ are   $\zeta_{q+1}^\delta$ and $\zeta_{q+1}^{-\delta}$ each with multiplicity $2$. We claim that for $\chi_s\in  \mathcal{E}(G,s)$ and $\chi_t\in  \mathcal{E}(G,s)$ we have $\xi_{(q-1)/\delta}+\xi_{(q-1)/\delta}^{-1} \in \Q(\chi_s)$ and $\xi_{(q+1)/\delta}+\xi_{(q+1)/\delta}^{-1} \in \Q(\chi_t)$. We see that $\xi_{(q-1)/\delta}+\xi_{(q-1)/\delta}^{-1} \in \Q(\chi_s)$ if and only if $\Stab_\G(\xi_{q-1)/\delta} +\xi_{(q-1)/\delta}^{-1} )  \ge \Stab_\G(\chi_s) $. If $\sigma \in \G$ does not fix $\xi_{q-1)/\delta} +\xi_{(q+1)/\delta}^{-1}$, it is immediate that $\sigma $ does not fix the conjugacy class of $s$ and $\sigma $ cannot fix $\chi_s$. Thus, $\xi_{(q-1)/\delta}+\xi_{(q-1)/\delta}^{-1} \in \Q(\chi_s)$ and the fact that $\xi_{(q+1)/\delta}+\xi_{(q+1)/\delta}^{-1} \in \Q(\chi_t)$ follows by a similar argument.  Proposition  \ref{stthings} implies that in all these cases we have  the exponent of $G^*/\mathbf{O}^{p'}(G^*)  $ is $\delta$. So if $\delta =1$ it is immediate that these character are trivial on the center. If  $ \delta =2$ we have that $s$ and $t$ lie in $\mathbf{O}^{p'}(G^*),$ since they are both squares in $G^*$ and by Corollary \ref{center} we see that these characters are trivial on the centers.

    If $S = \operatorname{PSp}_4(q)$ we see that both $\chi_s$ and $\chi_t$ in $\Irr(\operatorname{Sp}_4(q))$ deflate to characters in $\Irr(\operatorname{PSp}_n(q))$. Using Lemma \ref{NumberTheoryLemma} we see that for all $q$ apart from $q \in \{2,3,5,7,8,13\}$ we get that one of these characters has a field of values of degree $2$ or degree at least $4$, and in the case of $q = 8$ we see that $\Q(\chi_s)$ and $\Q(\chi_t)$ give $2$ distinct degree-$3$ extensions, so $|\Q(S):\Q| > 3$. If $q=13,$ then we then consider a character in the series indexed by  the semisimple element with eigenvalues $\zeta_{170},\zeta_{170}^{13},\zeta_{170}^{-1},\zeta_{170}^{-13}$ and arguing as in the other cases we see that this character gives an extension with of degree greater than $3$. All remaining cases can be checked in \cite{GAP}.

    We now consider the case where $G \neq \operatorname{Sp}_4(q)$. Let $\delta$ be as above  then in all cases we can find an element  $s\in G^*$ with non-identity eigenvalues $\zeta_{(q^3-1)/\delta},\zeta_{(q^3-1)/\delta}^q,\zeta_{(q^3-1)/\delta}^{q^2},\zeta_{(q^3-1)/\delta}^{-1},\zeta_{(q^3-1)/\delta}^{-q}$ and $\zeta_{(q^3-1)/\delta}^{-q^2}$. Then arguing as in Remark \ref{tftgt} we see that any Galois automorphism that does not fix the multiset $\{ \xi_{(q^3-1)/\delta},\xi_{(q^3-1)/\delta}^q,\xi_{(q^3-1)/\delta}^{q^2},\xi_{(q^3-1)/\delta}^{-1},\xi_{(q^3-1)/\delta}^{-q}$ and $\zeta_{(q^3-1)/\delta}^{-q^2}\}$, will not fix any  $\chi_s \in \mathcal{E}(G,s)$. Then we get that via the fundamental theorem of Galois theory $|\Q(\chi_s):\Q| \ge \Q(\xi_{(q^3-1)/\delta})/6$. Further, Corollary \ref{center},  the fact that $s$ is a square when $\delta=2,$ and the fact that the exponent of $G^*/\mathbf{O}^{p'}(G^*)  $ is $\delta$ give that this character is trivial on the center. Thus, if $G$ is of the form $\operatorname{Sp}_{2n}(q)$ we see that it is sufficient to show that $\varphi((q^3-1)/2)/6 \notin \{1,3\}$ in all cases. Using the fact that $\varphi(n) \ge \sqrt{n/2}$ for all $n$ reduces this to checking finitely many $q$. This gives that the only situations to consider are when $q =2$.

    In the case that $G$ is a special orthogonal group, we have that the character $\chi_s$ deflates to a character of the corresponding projective special orthogonal group. Then, the field of values of the  restriction of this character to $S$ will contain an index $\delta$ subfield of $\Q(\chi_s)$, thus it is sufficient to show that $\varphi((q^3-1)/2)/6\delta \notin \{1,3\}$ in this case. Arguing as in the above paragraph gives that the only situations to consider are when $q \in \{2,4\}$. In the case where $q = 4$ we see that we can find a semisimple element of the dual group with non-identity eigenvalues $\zeta_{65},\zeta_{65}^4,\zeta_{65}^{16},\zeta_{65}^{-1},\zeta_{65}^{-4}$ and $\zeta_{65}^{-16}$. Arguing as above using this element in the place of $s$ gives the result. 

    Now we looks at the remaining cases, all of which have $ q = 2$. By Proposition \ref{gapstuff2} we see that $\Q(\operatorname{Sp}_6(2)) = \Q$ and $\Q(\operatorname{Sp}_8(2))  =\Q(\sqrt{17})$. If $S = \operatorname{Sp}_{2n}(2)$ with $n > 4$ we can find a semisimple element in $s \in S^*$ with non-identity eigenvalues  $\xi_{17}^{2^k}$ for $0 \le k \le 8 $ and we get that the semisimple character in there series $\mathcal{E}(S,s)$ will have field of values containing $\sqrt{17}$. If instead $S$ is a group in type $\operatorname{B_n}, \operatorname{D_n}$ or $\operatorname{^2D_n}$ we can assume that $n > 6$ by Proposition \ref{gapstuff2}. Then we take the semisimple element in $G^* = G$ with non-identity eigenvalues $\xi_{65}^{2^k}$ for $0\le k < 12$
    and arguing as in pervious cases we see that the restriction of the semisimple character in the series corresponding to this semisimple element to corresponding simple group will given  a character with  character field of degree divisible by $2$ over $\Q$.  
    \end{proof}

\end{document}